\title[]{Global Strichartz estimates for an inhomogeneous Maxwell system}
\def\PpP{-}% <<< pagine da mostrare
\newcommand{\bra}[1]{\langle #1 \rangle}
\newcommand{\one}[1]{\mathbf{1}_{#1}}
\numberwithin{equation}{section}
\newtheorem{theorem}{Theorem}[section]
\newtheorem{corollary}[theorem]{Corollary}
\newtheorem{lemma}[theorem]{Lemma}
\newtheorem{proposition}[theorem]{Proposition}
\theoremstyle{remark}
\newtheorem{remark}[theorem]{Remark}
\theoremstyle{definition}
\def\B{\mathbf{B}}
\def\E{\mathbf{E}}
\def\H{\mathbf{H}}
\def\D{\mathbf{D}}
\def\J{\mathbf{J}}
\date{\today}
\author[P.~D'Ancona]{Piero D'Ancona}
\address{Piero D'Ancona:
Dipartimento di Matematica\\
Sapienza Universit\`{a} di Roma\\
Piazzale A.~Moro 2\\
00185 Roma\\
Italy}
\email{dancona@mat.uniroma1.it}
\author[R.~Schnaubelt]{Roland Schnaubelt}
\address{Roland Schnaubelt:
Karlsruhe Institute of Technology\\
Department of Mathematics\\
76128 Karlsruhe\\
Germany}
\email{schnaubelt@kit.edu}
\thanks{Funded by the Deutsche Forschungsgemeinschaft (DFG, German Research Foundation) 
--Project-ID 258734477 -- SFB 1173%The first author is partially supported by the Project 
%Ricerca Scientifica Sapienza 2016: ``Metodi di Analisi Reale e Armonica per problemi stazionari ed evolutivi''.
}
\subjclass[2010]{%
35Q61% Maxwell equations,
, 35J05% Laplacian operator, reduced wave equation (Helmhol
}
\keywords{%
Maxwell equations%
; smoothing estimates%
; Strichartz estimates%
}
\begin{document}

\begin{abstract}
We show global-in-time Strichartz estimates for the isotropic Maxwell system
with divergence free data.  On the scalar permittivity and permeability
we impose decay assumptions as $|x|\to\infty$ and a non-trapping condition.
The proof is based on smoothing estimates in weighted $L^2$ spaces which
follow from corresponding resolvent estimates for the underlying Helmholtz problem.
\end{abstract}

\maketitle

%%% >>> INDEX (toc)
%\tableofcontents

% e_f_pre  <<<<<<<<< PREAMBOLO

\section{Introduction}\label{sec:intr}

This paper investigates a model for the propagation of
electromagnetic waves in continuous media, the
\emph{Maxwell equations} 
\begin{equation}\label{eq:maxw1}
  \D_{t}=\nabla \times\H-\J,
  \qquad
  \B_{t}=-\nabla \times\E,
  \qquad
  \nabla \cdot \D=\nabla \cdot \B=0,
\end{equation}
on $\mathbb{R}_{t}\times \mathbb{R}^{3}_{x}$
with \emph{linear inhomogeneous} material laws
\begin{equation}\label{eq:constrel}
  \D= \epsilon(x)\E,
  \qquad
  \B=\mu(x)\H,
\end{equation}
and the (divergence free) current density $\J=\J(t,x)$.
Here, $\E$ and $\D$ are the electric fields, 
$\B$ and $\H$ are the magnetic fields, and the permittivity
$\epsilon$ and the permeability $\mu$ are positive scalar functions
on $\mathbb{R}^{3}$. Hence the model is \emph{isotropic},
i.e., the interaction of fields with matter depends on
the location but not on the direction of the fields
$\D,\H,\E,\B:\mathbb{R}\times \mathbb{R}^{3}\to \mathbb{R}^{3}$.
We note that the divergence constraints follow from the evolution
equations if the initial data $\D(0)$ and $\B(0)$ and the current
$\J$ are divergence free.

The Maxwell system is the foundation of electromagnetic theory 
so that it is not necessary to recall the importance of model
\eqref{eq:maxw1} and \eqref{eq:constrel} in applications,
including the classical case $\epsilon,\mu=\mathrm{const}$.
Despite the large literature devoted to the subject,
see e.g.\ the monographs \cite{Cessenat96} and  \cite{FabrizioMorro03},
many important questions are still unclear.

Global well posedness in Sobolev spaces
$H^{s}$ of the Cauchy problem for \eqref{eq:maxw1}
follows from the general theory of hyperbolic systems,
under rather weak conditions on the coefficients $\epsilon$ and $\mu$.
Here we are 
mainly interested in the asymptotic properties of solutions.
Besides its inherent importance,
information on the decay of the solutions
is essential for the study of the corresponding nonlinear
problems. In the constant coefficient case
\begin{equation*}
  \E_{t}=\nabla \times\B -\J,
  \qquad
  \B_{t}=-\nabla \times\E,
  \qquad
  \nabla \cdot \E=\nabla \cdot \B=0,
\end{equation*}
with data
\begin{equation*}
  \E(0,x)=\E_{0},\qquad \B(0,x)=\B_{0},
\end{equation*}
solutions are easily seen to satisfy diagonal systems of
wave equations
\begin{equation*}
  \square \E=-\J_t,\qquad
  \square \B=\nabla\times \J.
\end{equation*}
Hence one can apply the well established theory
on dispersive properties of wave equations.
The strongest property is the pointwise decay
\begin{equation}\label{eq:ptwdec1}
  \|\E(t,\cdot)\|_{L^{\infty}}
  +
  \|\B(t,\cdot)\|_{L^{\infty}}
  \lesssim
  \bigl(\|\nabla\E_{0}\|_{L^{1}}+
    \|\nabla\B_{0}\|_{L^{1}}\bigr) \cdot |t|^{-1},
\end{equation}
where we set $\J=0$.
From \eqref{eq:ptwdec1} \emph{Strichartz estimates} can be deduced.
For all couples of \emph{wave admissible}
indices $(p,q)$ and $(r,s)$, that is to say
\begin{equation}\label{eq:waint}
  \textstyle
  \frac 1p+\frac 1q=\frac 12,
  \qquad
  p\in[2,\infty],
  \qquad
  q\in[2,\infty)
\end{equation}
 in dimension $3$, we have
\begin{equation*}%\label{eq:strich1}
  \||D|^{-\frac 2p}D_{t,x}\E\| _{L^{p}L^{q}}
  +
  \||D|^{-\frac 2p}D_{t,x}\B\| _{L^{p}L^{q}}
  \lesssim
  \|\nabla\E_{0}\|_{L^2}+\|\nabla\B_{0}\|_{L^2}+\|\J(0,\cdot)\|_{L^2}
   + \||D|^{\frac 2r}D_{t,x}\J\| _{L^{r'}L^{s'}}
\end{equation*}
(see \cite{GinibreVelo95-b}, \cite{KeelTao98-a}).
Here we are using the notations 
$|D|^{s}u=\mathcal{F}^{-1}(|\xi|^{s}\widehat{u}(\xi))$,
where $\mathcal{F}u=\widehat{u}$ is the Fourier transform,
and $L^{p}L^{q}=L^{p}(\mathbb{R};L^{q}(\mathbb{R}^{3}))$.
An even weaker form of dispersion is expressed by the
so called \emph{smoothing estimates}
\begin{equation}\label{eq:smooest1}
  \|\bra{x}^{-1/2-}\E\|_{L^{2}L^{2}}+
  \|\bra{x}^{-1/2-}\B\|_{L^{2}L^{2}}
  \lesssim
  \|\E_{0}\|_{L^{2}}+
  \|\B_{0}\|_{L^{2}}
\end{equation}
for $\J=0$.
(See e.g.~\cite{DAncona15-a} for a comprehensive framework
for such estimates.)

Substantial work has been devoted in recent years to extend
dispersive estimates to more general equations, including
in particular equations with electromagnetic potentials
or variable coefficients, and equations on manifolds
(see among many others
\cite{Goldberg04},
\cite{RodnianskiSchlag04-a}, \cite{Schlag05-b}
for the Schr\"odinger equation;
\cite{Cuccagna00-a}, \cite{GeorgievVisciglia03-a},
\cite{DAnconaPierfelice05-a} for the wave equation;
for wave equations with variable coefficients in highest order,
\cite{Tataru08-a}, \cite{SoggeWang10}, \cite{MetcalfeTataru12};
concerning dispersive estimates,
\cite{Yajima95-b},
\cite{Yajima95-a}, \cite{Yajima99-a}, \cite{ArtbazarYajima00},
\cite{GoldbergSchlag04}, 
\cite{DAnconaFanelli06-a}). 

Astonishingly, only little is known about such estimates for 
the Maxwell system \eqref{eq:maxw1} and \eqref{eq:constrel}. 
In \cite{DumasSueur12} \emph{local-in-time} Strichartz estimates
were shown for smooth scalar coefficients $\epsilon$ and $\mu$ 
being constant outside a compact set. For matrix valued 
coefficients the situation seems to be much more complicated, 
as already for constant matrices $\epsilon$ and $\mu$ the 
dispersive decay depends on the multiplicity of their 
eigenvalues, see \cite{Liess91}, \cite{LucenteZiliotti00} and also \cite{MandelSchippa21}. 
Very recently, local-in-time  Strichartz estimates with matrix valued 
(anisotropic) coefficients were shown  in the two dimensional case, \cite {SchippaSchnaubelt21}.
In the present work we are concerned with  \emph{global-in-time} 
Strichartz estimates for scalar  $\epsilon$ and $\mu$ in $C^2$ 
under some decay assumptions as $|x|\to\infty$.

In our arguments we use  a second-order formulation of \eqref{eq:maxw1} and \eqref{eq:constrel}. 
By a computation similar to the constant coefficient
case, any solution
$\D(t,x)$ to the problem \eqref{eq:maxw1} with \eqref{eq:constrel}
also  solves the system
\begin{equation}\label{eq:waveD}
\textstyle
  \D_{tt}+
  \nabla \times\frac 1\mu \nabla \times \frac1\epsilon \D=-\J_t,
  \quad
  \nabla \cdot\D=0,
  \qquad
  \D(0,x)=\D_{0},
  \quad
  \D_{t}(0,x)=\nabla \times \frac1\mu \B_{0}-\J(0).
\end{equation}
The other fields satisfy similar  equations, e.g., $\B$ 
satisfies an analogous system with  $\epsilon$ and $\mu$ interchanged and modified data, namely 
\begin{equation}\label{eq:waveB}
  \textstyle
  \B_{tt}+
  \nabla \times\frac 1 \epsilon \nabla \times \frac1\mu \B=\nabla \times \frac1\epsilon\J,
  \quad
  \nabla \cdot\B=0,
  \qquad
  \B(0,x)=\B_{0},
  \quad
  \B_{t}(0,x)=-\nabla \times \frac1\epsilon\D_{0}.
\end{equation}
The material laws \eqref{eq:constrel} then imply
\begin{align}\label{eq:waveE}
  &\E_{tt}+
  \tfrac1\epsilon\nabla \times\tfrac 1\mu \nabla \times  \E=-\tfrac1\epsilon\J_t,
  \quad
  \nabla \cdot(\epsilon\E)=0,
  \quad
  \E(0)=\E_{0},
  \quad
  \E_{t}(0)=\tfrac1\epsilon\nabla \times \H_{0}-\tfrac1\epsilon\J(0),\\
 \label{eq:waveH}
  &\H_{tt}+
  \tfrac1\mu\nabla \times\tfrac 1 \epsilon \nabla \times \H= \tfrac1\mu\nabla \times \tfrac1\epsilon\J,
  \quad
  \nabla \cdot(\mu\H)=0,
  \quad
  \H(0)=\H_{0},
  \quad
  \H_{t}(0)=-\tfrac1\mu\nabla \times \E_{0}.
\end{align}
In this work we focus on  \eqref{eq:waveD}.
Equations \eqref{eq:waveD} and \eqref{eq:waveB}
are essentially systems of wave equations with 
variable coefficients. Indeed, one can write
\begin{equation*}
  \textstyle
  \epsilon \mu
  \nabla \times\frac 1 \mu \nabla \times \frac1{\epsilon} U
  =
  \nabla \times \nabla \times U-
  b(x,\partial)U
\end{equation*}
where $b(x,\partial)$ is the first-order matrix operator
\begin{equation}\label{eq:defbxD}
  \textstyle
  b(x,\partial)U=
  (p+q)\times(\nabla \times U)+
  \nabla \times(p \times U)
  - (p+q) \times(p \times U)   
\end{equation}
with coefficients
\begin{equation*}
  p=\nabla\log \epsilon,
  \qquad
  q=\nabla\log \mu.
\end{equation*}
Here we heavily use that $\epsilon$ and $\mu$ are scalar.
We also denote by $\widetilde{b}(x,\partial)$ the operator
as in \eqref{eq:defbxD} with $p$ and $q$ interchanged:
\begin{equation*}
  \textstyle
  \widetilde{b}(x,\partial)U=
  (p+q)\times(\nabla \times U)+
  \nabla \times(q \times U)
  -(p+q) \times(q \times U).    
\end{equation*}
Since 
$\nabla \times \nabla \times\D=
  -\Delta\D+\nabla(\nabla \cdot\D)=-\Delta\D$,
we see that \eqref{eq:waveD} can be written as
\begin{equation}\label{eq:waveD1}
  \epsilon \mu\D_{tt}-\Delta\D-b(x,\partial)\D =-\epsilon \mu\J_t, 
  \qquad \nabla\cdot \D=0,
\end{equation}
and similarly \eqref{eq:waveB} is equivalent to
\begin{equation}\label{eq:waveB1}
  \epsilon \mu\B_{tt}-\Delta\B-\widetilde{b}(x,\partial)\B
            =  \epsilon \mu\nabla \times \tfrac1\epsilon\J, 
  \qquad \nabla\cdot \B=0.
\end{equation}
In other words, for scalar $\epsilon$ and $\mu$ the divergence constraint allows us
to reduce \eqref{eq:maxw1} and \eqref{eq:constrel}
to a wave system with uncoupled principal part $(\epsilon \mu \partial_{tt} -\Delta) I_{3\times 3}$.

The main goal of the paper is to prove the following estimates, 
which apply in particular to the fields solving the Maxwell system
\eqref{eq:maxw1} and \eqref{eq:constrel}.

\begin{theorem}[]\label{the:strichmaxw}
  Let $\epsilon(x),\mu(x):\mathbb{R}^{3}\to \mathbb{R}$
  and assume for some $\delta\in(0,1/2)$ that
  \begin{enumerate}
    \item 
    $\inf\epsilon \mu>0$
   \ and \
    $(\epsilon \mu)'_{-}\le \frac 14 (1-2^{-\delta})^{-1} \epsilon \mu
     \bra{x}^{-1-\delta}$,
    \item 
    $|\epsilon-1|+|\mu-1|\lesssim \bra{x}^{-2-\delta}$, \
    $|\nabla\epsilon|+|\nabla \mu|\lesssim \bra{x}^{-\frac32-\delta}$, \
    and \
    $|D^{2}\epsilon|+|D^{2}\mu|
      \lesssim \bra{x}^{-\frac52-\delta}$.
  \end{enumerate}
  Let $\D_0=\epsilon \E_0$, $\B_0=\mu\H_0$, and $\J$ be divergence free.
  Then the solution $\D$ to \eqref{eq:waveD} satisfies the Strichartz estimate
  \begin{equation*}
    \||D|^{-\frac 2p}D_{t,x}\D\| _{L^{p}L^{q}}
    \lesssim
    \|\nabla \D_{0}\|_{L^2}+\|\nabla\B_{0}\|_{L^2}
    + \|J(0)\|_{L^2} + \||D|^{\frac 2r}\J_t\|_{L^{r'}L^{s'}}
  \end{equation*}
  for all wave admissible $(p,q)$ and $(r,s).$
  The solution $\B$   to \eqref{eq:waveB} fulfill 
   \begin{equation*}
    \||D|^{-\frac 2p}D_{t,x}\B\| _{L^{p}L^{q}} 
    \lesssim
    \|\nabla \D_{0}\|_{L^2}+\|\nabla\B_{0}\|_{L^2}
    + \||D|^{\frac 2r}\nabla \J\|_{L^{r'}L^{s'}}.
  \end{equation*}
  Here we can replace $\D$ by $\E$ and $\B$ by $\H$, solving \eqref{eq:waveE}
  respectively  \eqref{eq:waveH}.
\end{theorem}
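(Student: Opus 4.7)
The plan is to work with the second-order formulation \eqref{eq:waveD1} (and \eqref{eq:waveB1} for $\B$), so that each field solves a constant-principal-part wave equation
\begin{equation*}
\epsilon\mu\,\D_{tt} - \Delta\D = b(x,\partial)\D - \epsilon\mu\,\J_t,
\qquad \nabla\cdot\D=0,
\end{equation*}
with a first-order perturbation $b(x,\partial)$ whose coefficients $p,q$ decay like $\bra{x}^{-3/2-\delta}$ by assumption (2). From here I would follow the classical \emph{resolvent $\to$ smoothing $\to$ Strichartz} strategy in three steps.

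\textbf{Step 1 (Resolvent estimates).} Let $L=-\tfrac{1}{\epsilon\mu}(\Delta+b(x,\partial))$, a self-adjoint operator on $L^2(\epsilon\mu\,dx)$ restricted to divergence-free fields. The core step is to establish the uniform limiting absorption bound
\begin{equation*}
\|\bra{x}^{-1/2-\delta/2}(L-\lambda^2\pm i0)^{-1}f\|_{L^2}
\lesssim |\lambda|^{-1}\,\|\bra{x}^{1/2+\delta/2}f\|_{L^2},
\qquad \lambda\in\mathbb{R}\setminus\{0\},
\end{equation*}
by a Morawetz-type radial multiplier argument: test the Helmholtz equation against $\phi'(r)\,\partial_r u+(\phi(r)/r)\,u$ and take real parts, with $\phi$ chosen to convert assumption (1) into a coercive bulk term. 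The explicit calibration $\tfrac14(1-2^{-\delta})^{-1}$ is precisely what is needed so that the positive contribution from the radial derivative of $\epsilon\mu$ dominates the error produced by $b(x,\partial)$, whose size is controlled by assumption (2). Both high and low frequency regimes must be treated; the divergence constraint removes the kernel of $\nabla\times\nabla\times$ and makes the argument stable down to $\lambda\to 0$.

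\textbf{Step 2 (Smoothing estimates).} By Kato's abstract smoothing theorem applied to $\sqrt{L}$, the uniform resolvent bound translates into the global-in-time smoothing estimate
\begin{equation*}
\|\bra{x}^{-1/2-\delta/2}\,\nabla_{t,x}\D\|_{L^2_tL^2_x}
\lesssim \|\nabla\D_0\|_{L^2}+\|\D_t(0)\|_{L^2}+\|\bra{x}^{1/2+\delta/2}\J_t\|_{L^2L^2},
\end{equation*}
using Duhamel's formula for the forcing and the identification $\D_t(0)=\nabla\times\tfrac{1}{\mu}\B_0-\J(0)$ from \eqref{eq:waveD}.

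\textbf{Step 3 (Upgrade to Strichartz).} Rewrite the equation in constant-coefficient form $(\partial_t^2-\Delta)\D=F$ with
\begin{equation*}
F = (1-\epsilon\mu)\D_{tt}+b(x,\partial)\D-\epsilon\mu\,\J_t,
\end{equation*}
and estimate $F$ in the dual smoothing norm $L^2(\bra{x}^{1+\delta}dx)$ via Step 2 together with the decay rates in (2) (with a standard Littlewood--Paley decomposition to match the orders of derivatives). The abstract Keel--Tao framework, combining Strichartz for the free wave group with the smoothing bound, then yields the claimed estimate for $\D$. The case of $\B$ is entirely parallel using $\widetilde b$ in \eqref{eq:waveB1}, and $\E,\H$ follow from the material laws \eqref{eq:constrel} and the decay of $|\epsilon-1|,|\mu-1|$.

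\textbf{Main obstacle.} The delicate point is Step 1, specifically controlling the resolvent uniformly as $\lambda\to 0$ in the presence of the first-order term $b(x,\partial)$, whose differential order matches the weight gained by the multiplier. Assumption (1) is sharp for exactly this reason: its numerical constant is what allows the Morawetz bulk term to absorb the perturbation without loss, and the divergence constraint is indispensable to avoid a zero-energy resonance in the curl-curl structure.
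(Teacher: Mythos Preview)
Your outline captures the right large-scale architecture (resolvent $\to$ smoothing $\to$ Strichartz), but there is a genuine gap in Step~1, and Step~3 follows a different path from the paper.

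\textbf{Step 1.} A single Morawetz multiplier does \emph{not} handle the first-order term $b(x,\partial)$ globally. The Morawetz identity absorbs $b$ only when $b$ is small (Proposition~\ref{pro:smoothingest} in the paper requires $\sigma$ small), which under assumption~(2) holds only for $|x|$ large. For bounded $x$ the coefficients of $b$ are of unit size and the multiplier argument loses; the paper closes this with a Carleman estimate on compact sets (Section~\ref{sub:carl_esti}) and patches the two regions together (Section~\ref{sec:comp_esti}). Moreover, this patched estimate is uniform only for large $|z|$; for bounded frequencies one needs a separate Fredholm/compactness argument (Proposition~\ref{pro:fullopres}) together with the exclusion of eigenvalues and resonances (Propositions~\ref{pro:spectralass1}--\ref{pro:spectralass3}). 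Your attribution of the constant $\tfrac14(1-2^{-\delta})^{-1}$ is also off: it calibrates the condition $\|\bra{x}a^{-1}a'_{-}\|_{\ell^{1}L^{\infty}}\le \tfrac14$ needed for the $I_v$ term in the Morawetz identity (see \eqref{eq:estIv}), not the absorption of $b$. Finally, ``the divergence constraint removes the kernel and makes the argument stable down to $\lambda\to0$'' hides exactly the hard part: excluding a resonance at $z=0$ is the content of Proposition~\ref{pro:spectralass3} and is the sole reason the stronger decay $|\nabla\epsilon|+|\nabla\mu|\lesssim\bra{x}^{-3/2-\delta}$ in assumption~(2) is imposed.

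\textbf{Step 3.} Writing $(\partial_t^2-\Delta)\D=(1-\epsilon\mu)\D_{tt}+b\D-\epsilon\mu\J_t$ and feeding the right-hand side back via smoothing is problematic: the term $(1-\epsilon\mu)\D_{tt}$ carries two derivatives, while the smoothing estimate only controls one derivative of $\D$ in weighted $L^2$. The paper avoids this by not perturbing off the free flow; instead it applies the local-in-time and small-coefficient Strichartz estimates of Metcalfe--Tataru directly to the variable-coefficient equation \eqref{eq:wavevarcompl}, and upgrades to global-in-time via Burq's cutoff argument (Proposition~\ref{pro:strichcond}). The commutator terms produced by the cutoffs live in $L^2L^2$ on a compact annulus and are controlled by the smoothing estimates of Propositions~\ref{pro:estMaxwhom} and~\ref{pro:homhom}. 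A further subtlety you do not see at this level of detail: passing from $\sqrt{H}$ to $\nabla$ in the inhomogeneous estimate requires the Riesz-type bounds of Lemma~\ref{lem:riesz}, which again rely on the divergence constraint and on the extra decay of $D^2\epsilon$.
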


% In the inhomogeneous estimate we have to replace the spatial gradient $D_x$
% by the square root of the operators
% \begin{equation*}
%   \textstyle
%   H_{\D}\D=\nabla \times\frac 1\mu \nabla \times \frac1{\epsilon}\D, \qquad 
%    H_{\E}\E=\tfrac1\epsilon\nabla \times\frac 1\mu \nabla \times \E,
% \end{equation*}
% which are self adjoint on divergence free vector fields with a weighted $L^2$
% scalar product, see \eqref{eq:HS-D} and \eqref{eq:HS-E}, respectively.
% One further defines the operators $H_{\B}$, repectively $H_{\H}$, by 
% interchanging $\epsilon$ and $\mu$  in $H_{\D}$, respectively $H_{\E}$.
% 
% \begin{theorem}[]\label{the:strichmaxw-in}
% Under the assumptions of Theorem~\ref{the:strichmaxw},
%  let $\D_0=\E_0=\B_0=\H_0=0$ and $\J$ be divergence free.
%   Then each solution $\D$ to \eqref{eq:waveD} satisfies the Strichartz estimate
%   \begin{equation*}
%     \||D|^{-\frac 2p}D_{t}\D\| _{L^{p}L^{q}} +\||D|^{-\frac 2p}\sqrt{H_\D}\D\| _{L^{p}L^{q}}
%     \lesssim
%    % \|\nabla \D_{0}\|_{L^2}+\|\nabla\B_{0}\|_{L^2}+ \|J(0)\|_{L^2}+ 
%    \||D|^{\frac 2p}\J_t\|_{L^{a'}L^{b'}}
%   \end{equation*}
%   for all wave admissible $(p,q)$ and $(a,b).$
%   The solutions $\B$   to \eqref{eq:waveB} fulfill 
%    \begin{equation*}
%     \||D|^{-\frac 2p}D_{t}\B\| _{L^{p}L^{q}} +\||D|^{-\frac 2p}\sqrt{H_\B}\B\| _{L^{p}L^{q}}
%     \lesssim
%     %\|\nabla \D_{0}\|_{L^2}+\|\nabla\B_{0}\|_{L^2}+ 
%     \||D|^{\frac 2p} \nabla \J\|_{L^{a'}L^{b'}}
%   \end{equation*}
%   Here we can replace $\D$ by $\E$ and $\B$ by $\H$, solving \eqref{eq:waveE}
%   respectively  \eqref{eq:waveH}.
% \end{theorem}

We briefly discuss the previous statements. In (1),  the symbol
$(a)'_{-}=\max\{-\partial_{r}a,0\}$ denotes the negative part
of the radial derivative, and 
$\bra{x}=(1+|x|^{2})^{1/2}$.
Wave admissible couples and the notations $L^{p}L^{q}$
and $|D|^{s}$ have been defined above (see \eqref{eq:waint}).

The second assumption in (1) is our non-trapping condition. 
Note that this is a one--sided condition, affecting only the negative
part of the radial derivative of $\epsilon \mu$; it is a kind
of `repulsivity' of the coefficients.
It is well known that
some hypothesis of this type is necessary to exclude trapped rays,
which are an obstruction to global decay in time
and even to the much weaker local energy decay.
Many of our intermediate results are true under weaker decay 
assumptions than (2). For instance, our basic smoothing estimate  
\eqref{eq:globresL2s}
for the wave equation and the corresponding resolvent 
bound \eqref{eq:largefreqfin}
are shown assuming condition (1), the decay
\begin{equation}\label{eq:decay0}
|\epsilon-1|+|\mu-1|+|D^{2}\epsilon|+|D^{2}\mu|\lesssim \bra{x}^{-2-\delta}, \qquad 
    |\nabla\epsilon|+|\nabla \mu|\lesssim \bra{x}^{-1-\delta},
\end{equation}    
and a non-resonance condition for the frequency $z=0$ stated before
Proposition~\ref{pro:fullopres}. The extra decay in the above hypothesis (2) 
is needed to remove this non-resonance condition in Proposition~\ref{pro:spectralass3},
and also to establish certain Riesz-type bounds  in Lemma~\ref{lem:riesz} in (weighted) 
$L^2$ spaces which are crucial to derive the Strichartz estimates.

The proof of Theorem \ref{the:strichmaxw} is given at the end of the paper.
It follows the general principle, pionereed in
\cite{RodnianskiSchlag04-a} and further developed 
in many works
(e.g., \cite{DAnconaFanelli08-a},
\cite{ErdoganGoldbergSchlag09-a},
\cite{MetcalfeTataru09-a},
\cite{SoggeWang10},
\cite{Tataru08-a}),
that weak decay properties of solutions can
be upgraded to much stronger decay, under suitable 
regularity and localization information on the coefficients.
The main  novelty of our paper is that we treat a system
with variable coefficients in higher order terms.
We explain our proofs in more detail.

For scalar wave equations, the paper \cite{MetcalfeTataru12} 
gives global Strichartz estimates if the coefficients 
are close to constants and decay as $|x|\to\infty$. (For derivatives
the decay assumptions  are similar to \eqref{eq:decay0}.) 
Moreover,  local-in-time estimates are proven 
without the smallness condition. As we can put our
problem in the form \eqref{eq:waveD1}, we are able to apply 
these results after suitable localizations of our solution.
Recall that the possibility to
deduce global Strichartz estimates from local estimates
combined with global local energy decay was discovered in
\cite{Burq03-a}. 
The localization procedure introduces commutator terms 
which we must estimate in $L^2L^2$.
These are controlled using the smoothing estimates in
Propositions \ref{pro:estMaxw} and \ref{pro:estMaxwhom}
which are based on \eqref{eq:largefreqfin}.
In this analysis, one must switch between homogeneous 
and inhomogeneous estimates; this requires $TT^*$ arguments and
suitable Riesz-type inequalities, see Lemma~\ref{lem:riesz}.
To prove the latter, we use crucially the divergence conditions
of the Maxwell system. On the other hand,
we must avoid the usual $TT^*$ argument since it would need 
Riesz' bounds in $\dot{H}^{-2/p}_q$ which are not available 
for our operator.

The necessary smoothing estimates are deduced directly
from the resolvent bound \eqref{eq:largefreqfin} for the
stationary problem, which also involves weighted $L^2$ norms,
via Plancherel's Theorem.
In principle, here we follow the general framework
of Kato smoothing (see \cite{DAncona15-a}). However we cannot 
apply the general theory since we have to work with the operator
$L(z)=\epsilon\mu z^2 +\Delta +b(x,\partial)$ without 
divergence constraint when showing the resolvent estimates.
Since the operator $\Delta +b(x,\partial)$ is not self adjoint, 
the Kato theory can not be applied directly.

We prove the resolvent estimates by splitting into
three different regimes: 
bounded frequencies, which we handle via compactness arguments,
see Section \ref{sec:low_freq};
large frequences and large $x$, via Morawetz type estimates,
see Section \ref{sub:mora_esti};
and large frequences on a compact region of space
via Carleman estimates,
see Section \ref{sub:carl_esti}.
In the step for small frequencies one has to exclude eigenvectors and 
resonances of $L(z)$. Here it is crucial to show that such functions
have to be divergence free, which is proved in the 
relevant Propositions~\ref{pro:spectralass1}, 
\ref{pro:spectralass2}, and \ref{pro:spectralass3}
using the structure of \eqref{eq:waveD1}.

% Indeed, 
% in Section \ref{sec:stri} we deduce Strichartz
% estimates from smoothing estimates of the
% form \eqref{eq:smooest1}, \eqref{eq:globresL2s}by applying the results
% of \cite{MetcalfeTataru09-a}.
% Thus the main bulk of the paper is devoted to the
% proof of smoothing estimates, see
% Section \ref{sec:smoo_esti} and in particular
% Proposition \ref{pro:estMaxw} and
% Proposition \ref{pro:estMaxwhom}.
% The smoothing estimates, in turn, are deduced
% from corresponding resolvent estimates for the
% stationary problem, following the general framework
% of Kato smoothing (see \cite{DAncona15-a}).
% We prove the resolvent estimates by splitting into
% three different regimes: 
% small frequencies via compactness arguments,
% see Section \ref{sec:low_freq};
% large frequences and large $x$ via Morawetz type estimates,
% see Section \ref{sub:mora_esti}
% and large frequences on a compact region of space
% via Carleman estimates,
% see Section \ref{sub:carl_esti}.

\section{Low frequencies}\label{sec:low_freq}

We first prove a resolvent estimate which is valid
for all values of the complex frequency, but with a
constant $C(z)$ which may grow as $|z|\to \infty$.
Hence, we will use this estimate only for
$z$ in a suitably chosen compact region.
In the next section we shall prove a
uniform estimate for large $|z|$.
Except for the final result, in the present section the space dimension 
is $n\ge3$, however in this paper we shall only need $n=3$.

We shall apply a few variations of the following standard argument. 
Suppose a reference operator $H_{0}$ satisfies, for $z$ 
in an open domain $\Omega \subseteq\mathbb{C}$, 
a resolvent estimate
\begin{equation*}
  \|R_{0}(z)v\|_{B_{1}}\le C(z)\|v\|_{B_{2}},
  \qquad
  R_{0}(z)=(H_{0}+z)^{-1},
\end{equation*}
where $B_{1}$ and $B_{2}$ are some Banach spaces.
Suppose also that
\begin{itemize}
  \item $H$ is a relatively compact perturbation of $H_{0}$,
  meaning that the operator
  $K(z)=(H-H_{0})R_{0}(z)$ extends to a bounded and
  compact operator on $B_{2}$,
  \item $z \mapsto K(z)$ is continuous in the
operator norm.
\end{itemize}
Then we can write
\begin{equation*}
  H+z=(H-H_{0})+H_{0}+z=(I+(H-H_{0})R_{0}(z))(H_{0}+z)=
  (I+K(z))(H_{0}+z).
\end{equation*}
Let the operator $I+K:B_{2}\to B_{2}$ be
\emph{injective}. Then it is also bijective since it is
Fredholm. Moreover, the operator norm of $(I+K(z))^{-1}$
is locally bounded for $z\in \Omega$. This type of
argument is classical on weighted $L^{2}$ spaces,
see e.g.~Theorem VI.14 in \cite{ReedSimon75-a},
and it holds more
generally in Banach spaces (a fact likely rediscovered
several times, see e.g~Lemma 3.4 in \cite{DAncona20}).
As a consequence, we can invert
$H+z$ for all values of $z\in \Omega$ and the resolvent
estimate holds also for $H$, in the form
\begin{equation*}
  \|(H+z)^{-1}v\|_{B_{1}}\le C'(z)\|v\|_{B_{2}},
\end{equation*}
with a different $C'(z)$, which is locally bounded
for $z\in \Omega$, but otherwise undetermined.

We first look at the operator without lower order terms 
$b(x,\partial)$
starting with a basic resolvent estimate outside the spectrum for 
\begin{equation*}
  R(z)=(\Delta+az)^{-1}, \qquad 
  z\in \mathbb{C}\setminus[0,+\infty).
\end{equation*}

\begin{proposition}[]\label{pro:deltaa}
  Assume that $a\in L^{\infty}$, $a>0$, $\lim_{|x|\to+ \infty}a(x)=1$ and 
  $z\in \mathbb{D}=\mathbb{C}\setminus[0,+\infty)$.
  Then $\Delta+az:H^{2}\to L^{2}$ is a bijection and
  $R(z):=(\Delta+az)^{-1}$ satisfies
  \begin{equation*}
    \|R(z)f\|_{H^{2}}\le C(z)\|f\|_{L^{2}}
  \end{equation*}
  for some continuous function $C:\mathbb{D}\to \mathbb{R}^{+}$.
\end{proposition}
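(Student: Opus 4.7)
The plan is to apply the Fredholm--type perturbation scheme just sketched in the excerpt, taking as reference operator $H_0 = \Delta$ so that $R_0(z) = (\Delta+z)^{-1}$ is the free resolvent. It is standard that $R_0(z):L^2 \to H^2$ is bijective with operator norm continuous (indeed analytic) in $z\in\mathbb{D}$. Writing
\begin{equation*}
  \Delta + az \;=\; (\Delta + z) + z(a-1) \;=\; \bigl(I + K(z)\bigr)(\Delta+z),
  \qquad K(z) := z(a-1)R_0(z),
\end{equation*}
the task reduces to showing that $K(z)$ is compact on $L^2$, continuous in $z$, and that $I+K(z)$ is injective for every $z\in\mathbb{D}$.

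Compactness of $K(z)$ follows from boundedness of $R_0(z):L^2\to H^2$ together with compactness of multiplication by $a-1$ from $H^2$ into $L^2$: since $(a-1)(x)\to 0$ as $|x|\to\infty$, one truncates $a-1$ to a ball, applies Rellich--Kondrachov on the compactly supported part, and handles the tail by its small $L^\infty$ norm, so that $K(z)$ is a norm limit of compact operators. Norm continuity of $z\mapsto K(z)$ is inherited from that of $R_0(z)$.

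For injectivity, suppose $u\in L^2$ satisfies $u + K(z)u=0$ and set $v = R_0(z)u \in H^2$. Then $u = (\Delta+z)v$ and $u = -z(a-1)v$, which together give $(\Delta + az)v = 0$. Pairing with $\bar v$ and integrating by parts yields
\begin{equation*}
  -\|\nabla v\|_{L^2}^2 \,+\, z \int a\,|v|^2\,dx \;=\; 0.
\end{equation*}
If $\operatorname{Im} z \neq 0$, the imaginary part forces $\int a\,|v|^2 = 0$, and the positivity $a>0$ yields $v\equiv 0$. If $z$ is real, then $z<0$, so both terms above are non-positive and must vanish separately, again giving $v\equiv 0$. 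Thus $I+K(z)$ is injective on $L^2$; being Fredholm of index zero (as a compact perturbation of the identity) it is therefore bijective, and local boundedness of $(I+K(z))^{-1}$ in operator norm follows from norm continuity of $K$ via the fact that the invertible elements form an open set on which inversion is continuous.

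Combining everything, $\Delta + az: H^2\to L^2$ is bijective and
\begin{equation*}
  \|R(z)f\|_{H^2} \;\le\; \|R_0(z)\|_{L^2\to H^2}\,\|(I+K(z))^{-1}\|_{L^2\to L^2}\,\|f\|_{L^2},
\end{equation*}
yielding the claimed continuous function $C(z)$ on $\mathbb{D}$. The only step that is not purely abstract is the injectivity of $I+K(z)$, where the sign hypothesis $a>0$ together with the exclusion $z\notin[0,\infty)$ plays the decisive role; this is the main (and essentially only) obstacle in the argument.
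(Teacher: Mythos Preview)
Your proof is correct and follows essentially the same route as the paper's: the same factorization $\Delta+az=(I+K(z))(\Delta+z)$ with $K(z)=z(a-1)R_0(z)$, the same compactness and injectivity arguments, and the same conclusion via Fredholm theory. You supply a bit more detail on why multiplication by $a-1$ is compact from $H^2$ to $L^2$ (truncation plus Rellich--Kondrachov), which the paper leaves implicit, and you invoke norm-continuity of $K$ rather than analytic Fredholm theory, but these are cosmetic differences.
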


\begin{proof}%[Proof of ...]
 Let $z\in \mathbb{D}$ and $R_0(z)=(z+\Delta)^{-1}$. We can write
  \begin{equation}\label{eq:idres}
    \Delta+az=\Delta+z+(a-1)z=
    (I+(a-1)zR_{0}(z))(\Delta+z).
  \end{equation}
  The operator $K(z)=(a-1)zR_{0}(z)$ is bounded and compact
  on $L^{2}$. We prove that $I+K(z)$ is injective for each 
  $z\in \mathbb{D}$.
  Assume that $(I+K)u=0$. Setting $v=R_{0}(z)u$,
  we have $v\in H^{2}$ and
  \begin{equation*}
    (\Delta+za)v=0
    \quad\text{which implies}\quad 
    \textstyle
    \int|\nabla v|^{2}- z\int a|v|^{2}=0.
  \end{equation*}
  If $\Im z\neq0$, taking the imaginary part we infer
  $v=0$ and hence $u=(\Delta+z)v=0$.
  If $\Im z=0$ so that $z=-\lambda\in(-\infty,0)$,
  we obtain
  \begin{equation*}
    \textstyle
    \int|\nabla v|^{2}+\lambda\int a|v|^{2}=0
  \end{equation*}
  and this implies again $v=0$.

  Thus by analytic Fredholm theory we can invert $I+K(z)$
  on $L^{2}$ and the operator norm of $(I+K(z))^{-1}$
  is locally bounded in $z\in \mathbb{D}$. 
  The claim follows writing
  \begin{equation*}
    (\Delta+az)^{-1}=R_{0}(z)(I+K(z))^{-1}
  \end{equation*}
  and using the elementary estimate
  \begin{equation*}%\label{eq:laplresolv}
    \|R_{0}(z)v\|_{H^{2}}\le
    C(z)\|v\|_{L^{2}},
    \qquad
    C(z)=Cd(z,\mathbb{R}^{+})^{-1},
  \end{equation*} 
  and the bound on $(I+K(z))^{-1}$.
  Note that $C(z)$ blows up as $z\to \mathbb{R}^{+}$.
\end{proof}

The next step is a \emph{limiting absorption principle}
for $R(z)$, where the limits of $R(z)$ as $\pm\Im z \downarrow0$
exist in a suitable topology.
In the following, we commit a slight abuse of notation
since for $\lambda\in \sigma(-\Delta)=[0,+\infty)$ there are
\emph{two} extensions $R_{0}(\lambda\pm i0)$ of the
resolvent, and we shall denote both limits with the same notation
$R_{0}(z)$ for the sake of terseness.
The limiting absorption principle for the free Laplacian is
expressed by the uniform estimate
\begin{equation*}%\label{eq:sharpR0}
  \|R_{0}(z)f\|_{X}+|z|^{1/2}\|R_{0}(z)f\|_{Y}
  +\|\nabla R_{0}(z)f\|_{\dot Y}
  \le C\|f\|_{Y^{*}}
\end{equation*}
valid for all $z\in \mathbb{C}$, with a constant independent
of $z$. Here the norms of
$X$, $Y$ and $Y^{*}$  are defined as follows:
$Y^{*}$ is the predual of $Y$, while
\begin{equation*}
  \textstyle
  \|v\|_{X}^{2}:=
  \sup\limits_{R>0}\frac{1}{\bra{R}^{2}}\int_{\{|x|=R\}}
  |v|^{2}dS,
  \qquad
  \|v\|_{Y}^{2}:=
  \sup\limits_{R>0}\frac{1}{\bra{R}}\int_{\{|x|\le R\}}
  |v|^{2}dx.
\end{equation*}
We shall also need the (stronger) homogeneous norms
\begin{equation}\label{eq:defdotX}
  \textstyle
  \|v\|_{\dot X}^{2}=\sup_{R>0}\frac{1}{R^{2}}
  \int_{\{|x|=R\}} |v|^{2}dS
  ,\qquad
  \|v\|_{\dot Y}^{2}= \sup_{R>0}\frac 1R\int_{\{|x|\le R\}}|v|^{2}dx.
\end{equation}
We note the
equivalent expressions in terms of dyadic norms 
\begin{equation}\label{eq:dyadic}
  \|v\|_{Y}\eqsim\|\bra{x}^{-\frac 12}v\|_{\ell^{\infty}L^{2}},
  \qquad
  \|v\|_{Y^{*}}\eqsim\|\bra{x}^{\frac 12}v\|_{\ell^{1}L^{2}},
   \qquad
  \|v\|_{X}\eqsim\|\bra{x}^{-1}v\|_{\ell^{\infty}L^{\infty}L^{2}},
\end{equation}
writing (using polar coordinates in the last term)
\begin{align*}
  \textstyle
  \|v\|_{\ell^{\infty}L^{2}}&=
  \sup\limits_{j\ge0}\|v\|_{L^{2}(A_{j})},
  \qquad
  A_{0}=\{|x|\le1\},
  \quad
  A_{j}=\{2^{j-1}\le|x|\le 2^{j}\},\\
  \textstyle
  \|v\|_{\ell^{1}L^{2}}&=\sum_{j\ge0}\|v\|_{L^{2}(A_{j})}, \qquad
   \|v\|_{\ell^{\infty}L^\infty L^{2}}
      =  \sup\limits_{j\ge0} \|v\|_{L^\infty_{|x|}L^{2}_\omega (A_{j})}.
\end{align*}
These  norms can be considered as sharp versions of weighted $L^{2}$
norms. Indeed, it is easy to check the inequalities
\begin{equation}\label{eq:XY-bounds} \begin{split}
  \|\bra{x}^{-\frac 12-\delta}v\|_{L^{2}}&\le C(\delta)\|v\|_{Y},
  \qquad
  \|v\|_{Y^{*}}\le C(\delta)\|\bra{x}^{\frac 12+\delta}v\|_{L^{2}},\\
  \|\bra{x}^{-\frac 32-\delta} v\|_{L^{2}}&\le C(\delta)\|v\|_{X}, \qquad
  \|\bra{x}^{-1} v\|_Y\le \|v\|_X
\end{split}\end{equation}
for all $\delta>0$.

In the next lemma we collect the relevant estimates for the
free Laplacian. We write them at the point $z^{2}$ with
$\Im z\ge0$, thus covering the entire complex plane for
both sides of $[0,+\infty)$. (In later sections it will be convenient to use $z^2$.)
We set $\widehat{x}=|x|^{-1}x$ for $x\in\mathbb{R}^n\setminus\{0\}$.

\begin{lemma}[]\label{lem:resetsR0}
  Let $z\in \mathbb{C}$ with $\Im z\ge0$.
  Then we have,
  with constants independent of $z$,
  \begin{align}\label{eq:R0first}
    \|R_{0}(z^{2})f\|_{X}+\|z R_{0}(z^{2})f\|_{Y}+
    \|\nabla R_{0}(z^{2})f\|_{Y}&\le
    C\|f\|_{Y^{*}},\\
 \label{eq:R0rads0}
    \|(\nabla-i \widehat{x}z)R_{0}(z^{2})f\|_{L^{2}}
    &\le C\||x|f\|_{L^{2}}.
  \end{align}
  Moreover, for $s\in[\frac 12,1]$ we have,
  with $C$ independent of $s$ and $z$,
  \begin{equation}\label{eq:R0rad}
    \|\bra{x}^{s-1}(\nabla-i \widehat{x}z)R_{0}(z^{2})f\|
      _{\ell^{\infty} L^{2}}
    \le C\|\bra{x}^{s}f\|_{\ell^{1} L^{2}}.
  \end{equation}
\end{lemma}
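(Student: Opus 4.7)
The plan is to treat \eqref{eq:R0first}, \eqref{eq:R0rads0}, and \eqref{eq:R0rad} as three variants of the limiting absorption principle for $-\Delta$, and to prove them by Morawetz-type multiplier identities applied to $u=R_{0}(z^{2})f$, which satisfies $(-\Delta-z^{2})u=f$ with the appropriate outgoing/incoming Sommerfeld condition at infinity whenever $z^{2}\in[0,+\infty)$. The first bound is the Agmon--H\"ormander estimate in sharp Morrey--Campanato form (as in Perthame--Vega), the second is a quantitative version of Kato's radiation condition, and the third is a dyadic refinement of the second.

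For \eqref{eq:R0first} I would pair the equation with a multiplier of the form
\begin{equation*}
Mu = \phi(r)\partial_{r}u + \tfrac{1}{2}\phi'(r)u + \tfrac{n-1}{2r}\phi(r)u, \qquad r=|x|,
\end{equation*}
where $\phi$ is a bounded, nondecreasing radial weight (a canonical choice is a smoothed version of $\phi_{R}(r)=\min(r/R,1)$). Taking the real part of $\int\overline{Mu}\,(-\Delta-z^{2})u\,dx$ and integrating by parts produces an identity whose left-hand side is a sum of nonnegative quadratic forms in $\partial_{r}u$, $r^{-1}\nabla_{\omega}u$, and $zu$ weighted by $\phi'$, balanced on the right by $-\mathrm{Re}\int\overline{Mu}\,f\,dx$. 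Taking $\sup_{R>0}$ yields the $Y$-bounds for $\nabla u$ and $zu$ after Cauchy--Schwarz against $\|f\|_{Y^{*}}$; the $X$-bound for $u$ is obtained in parallel from the spherical boundary term that falls out of the same integration by parts.

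For \eqref{eq:R0rads0} I would expand
\begin{equation*}
|(\nabla-i\widehat{x}z)u|^{2}=|\nabla u|^{2}+|z|^{2}|u|^{2}+2\,\mathrm{Re}\bigl(i\bar z\,\bar u\,\partial_{r}u\bigr),
\end{equation*}
integrate, and rewrite the cross term via integration by parts (using $\nabla\cdot\widehat{x}=(n-1)/r$ together with $(-\Delta-z^{2})u=f$) as $-2\,\mathrm{Re}\int\bar u\,f\,dx$ plus lower-order contributions that are already controlled by \eqref{eq:R0first}. Cauchy--Schwarz with the weight $|x|$ then closes the estimate. The dyadic statement \eqref{eq:R0rad} follows by localizing the same identity with a weight $\bra{x}^{2s-1}\chi_{A_{j}}$: the bilinear pairing of the $A_{j}$-estimate with the $A_{k}$-contribution of $f$ sums geometrically in $|j-k|$, and uniformity in $s\in[\tfrac12,1]$ is inherited by interpolation between the endpoints $s=\tfrac12,1$.

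The main obstacle is choosing $\phi$ so that the Morawetz identity yields constants uniformly in $z$ as $z^{2}$ approaches the positive real spectrum, and keeping track of the correct branch of the Sommerfeld condition (distinguishing $R_{0}(\lambda+i0)$ from $R_{0}(\lambda-i0)$) throughout the integration by parts. Once the weight is fixed, the dyadic summation for \eqref{eq:R0rad} and the interpolation in $s$ are routine bookkeeping over dyadic shells.
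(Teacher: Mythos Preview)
Your plan for \eqref{eq:R0first} matches the paper: the paper simply cites the Agmon--H\"ormander estimate (in the Perthame--Vega form you describe) and refers to \cite{CassanoDAncona15-a}, so the Morawetz multiplier you write down is exactly the engine behind that reference. Your plan for \eqref{eq:R0rad}, interpolating between the endpoints $s=\tfrac12$ (coming from \eqref{eq:R0first} via the dyadic equivalences \eqref{eq:dyadic}) and $s=1$ (coming from \eqref{eq:R0rads0}), is also what the paper does.

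The gap is in your treatment of \eqref{eq:R0rads0}. You propose to expand $|(\nabla-i\widehat{x}z)u|^{2}$ and integrate the three pieces separately, but when $z^{2}$ is on (or approaches) the positive real axis, $u=R_{0}(z^{2})f$ is \emph{not} in $L^{2}$: it decays like $|x|^{-(n-1)/2}$, so $\int|\nabla u|^{2}$ and $|z|^{2}\int|u|^{2}$ are both infinite, and the ``cross term'' is the difference of two divergent quantities. Your appeal to \eqref{eq:R0first} cannot repair this, since \eqref{eq:R0first} only gives $X$-- and $Y$--norm control, not unweighted $L^{2}$ control. The cancellation you need is precisely the content of the Sommerfeld radiation condition, and it has to be built into the multiplier from the outset rather than recovered a posteriori.

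The paper avoids this by splitting in $\arg z$. In the sector $\arg z\in[\tfrac\pi8,\pi-\tfrac\pi8]$ (equivalently $\Re z^{2}\lesssim|\Im z^{2}|$) one has $u\in H^{2}$, and the elementary identities $\eta\|u\|^{2}=\Im\int f\bar u$, $\|\nabla u\|^{2}=\lambda\|u\|^{2}-\Re\int f\bar u$ combine with Hardy's inequality to give $\|\nabla u\|_{L^{2}}+\|zu\|_{L^{2}}\lesssim\||x|f\|_{L^{2}}$ directly. In the complementary sector $0\le|\Im z^{2}|\le\Re z^{2}$ the paper invokes the radiation estimate of Barcel\'o--Vega--Zubeldia, which gives $\|(\nabla-i\widehat{x}\sqrt{\lambda})u\|_{L^{2}}\lesssim\||x|f\|_{L^{2}}$ via a genuine multiplier of the form $(\partial_{r}+iz+\tfrac{n-1}{2r})\bar u$ (so that the divergent pieces never appear separately), and then passes from $\sqrt{\lambda}$ to $z$ using $|\sqrt{\lambda}-z|\le\sqrt{|\eta|}$ together with the bound $|\eta|\,\|u\|_{L^{2}}^{2}\lesssim\||x|f\|_{L^{2}}^{2}$ obtained from Hardy again. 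If you want a self-contained argument, replace your expansion step by this conjugated-multiplier computation (or equivalently set $v=e^{-i|x|z}u$ and estimate $\|\nabla v\|_{L^{2}}$); your expansion as written does not close.
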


\begin{proof}%[Proof of ...]
  Estimate \eqref{eq:R0first} is essentially the
  classical Agmon--H\"{o}rmander estimate, which is uniform 
  in $z$ in the special case of the operator $\Delta$.
  See e.g.~\cite{CassanoDAncona15-a} for a complete proof.

  Consider now \eqref{eq:R0rads0}. Take $f\in L^2$ with $|x|f\in L^2$.
  The restriction that $f\in L^2$ can be removed by approximation. Define
  $u=R_{0}(\lambda+i \eta)f$, so that
  $(\Delta+\lambda+i \eta)u=f$.
 We  multiply this equation by 
  $\overline{u}$, take the imaginary and the real
  part of the resulting identity, and integrate over
  $\mathbb{R}^{n}$. We then obtain (see \eqref{eq:firstst} and
  \eqref{eq:secest} below for a similar computation)
  \begin{equation}\label{eq:easyest}
    \textstyle
    \eta\|u\|_{L^{2}}^{2}=\Im\int f \overline{u},
    \qquad
    \|\nabla u\|_{L^{2}}^{2}=\lambda\|u\|_{L^{2}}^{2}-
    \Re\int f \overline{u}.
  \end{equation}
  If $\lambda\le 2|\eta|$, these equations imply
  \begin{equation*}
    \|\nabla u\|_{L^{2}}^{2}\le
    2|\eta|\|u\|_{L^{2}}^{2}+\|f \overline{u}\|_{L^{1}}
    \le3\|f \overline{u}\|_{L^{1}},
  \end{equation*}
  and hence
  \begin{equation*}
    (|\eta|+|\lambda|)\|u\|_{L^{2}}^{2}+
    \|\nabla u\|_{L^{2}}^{2}
    \lesssim\|f \overline{u}\|_{L^{1}}
    \le\||x|f\|_{L^{2}}\||x|^{-1}u\|_{L^{2}}.
  \end{equation*}
  Using Hardy's inequality 
  $\||x|^{-1}u\|_{L^{2}}\lesssim\|\nabla u\|_{L^{2}}$,
  we conclude
  \begin{equation*}
    (|\eta|+|\lambda|)\|u\|_{L^{2}}^{2}+
    \|\nabla u\|_{L^{2}}^{2}
    \lesssim\||x|f\|_{L^{2}}^{2}.
  \end{equation*}
  We write this estimate in terms of $z^{2}=\lambda+i \eta$.
  Note that if $\arg z\in [\frac \pi8,\pi-\frac \pi8]$, 
  then $\arg z^{2}\in [\frac \pi4,2\pi-\frac \pi4]$, i.e.,
  $\lambda\le|\eta|$. We have thus  proved
  \begin{equation}\label{eq:lminep}
    \textstyle
    \|zR_{0}(z^{2})f\|_{L^{2}}+
    \|\nabla R_{0}(z^{2})f\|_{L^{2}}
    \lesssim\||x|f\|_{L^{2}}
    \quad\text{provided}\quad 
    \arg z \in [\frac \pi8,\pi-\frac \pi8].
  \end{equation}
  This estimate obviously yields
  \begin{equation}\label{eq:power1}
    \|(\nabla-i \widehat{x}z)
    R_{0}(z^{2})f\|_{L^{2}}
    \lesssim \||x|f\|_{L^{2}}
  \end{equation}
  for the same values of $z$.
  Next, we consider the region 
  $\arg z\in[0,\frac \pi 8]\cup[\pi-\frac \pi8,\pi]$,
  i.e., $\arg z^{2}=\lambda+i \eta\in[0,\frac \pi 4]
    \cup[2\pi-\frac \pi4,2\pi]$  or equivalently 
  $0\le|\eta|\le \lambda$.
  Proposition~3.1 in \cite{BarceloVegaZubeldia13-a}
  shows that
  \begin{equation}\label{eq:BVZ}
    \|(\nabla-i \widehat{x}\sqrt{\lambda})
    R_{0}(\lambda+i \eta)f\|_{L^{2}}
    \lesssim\||x|f\|_{L^{2}}
  \end{equation}
  with a constant independent of $\eta$ and $\lambda$.
  Setting $u=R_{0}(\lambda+i \eta)f$ and
  $v=e^{-i|x| \sqrt{\lambda}}u$, we have     
  $\nabla v=e^{-i|x| \sqrt{\lambda}}  (\nabla-i \widehat{x}\sqrt{\lambda})u$. 
  By Hardy's inequality,  estimate \eqref{eq:BVZ} implies
  \begin{equation*}
    \||x|^{-1}R_{0}(\lambda+i \eta)f\|_{L^{2}}=
    \||x|^{-1}u\|_{L^{2}}=
    \||x|^{-1}v\|_{L^{2}}\lesssim
    \|\nabla v\|_{L^{2}}\lesssim\||x|f\|_{L^{2}}.
  \end{equation*}
  From the first part of \eqref{eq:easyest} we then deduce
  \begin{equation*}
    |\eta|\,\|R_{0}(\lambda+i \eta)f\|^{2}_{L^{2}}
    \le
    \||x|f\|_{L^{2}}\||x|^{-1}u\|_{L^{2}}
    \lesssim\||x|f\|_{L^{2}}^{2}.
  \end{equation*}
  Observe that for $\lambda+i \eta=z^{2}$ and
  $0\le |\eta|\le \lambda$ we have
  \begin{equation*}
    |\sqrt{\lambda}-z|=|(\Re z^{2})^{1/2}-z|\le \sqrt{|\eta|}.
  \end{equation*}
The previous estimates thus lead to
  \begin{equation*}
    \|(\nabla-iz \widehat{x})R_{0}(z^{2})f\|_{L^{2}}
    \le
    \|(\nabla-i \sqrt{\lambda} \widehat{x})R_{0}(z^{2})f\|_{L^{2}}
    +
    \sqrt{|\eta|}\|R_{0}(z^{2})f\|_{L^{2}}
    \lesssim\||x|f\|_{L^{2}}.
  \end{equation*}
Combined with \eqref{eq:power1}, we see that \eqref{eq:R0rads0}
  holds uniformly in $z$ for all $\Im z\ge0$.

  For the last assertion, we note that \eqref{eq:R0rad} for $s=1$
follows from \eqref{eq:R0rads0}. If $s=\frac 12$, inequalities  
 \eqref{eq:dyadic}  and \eqref{eq:R0first} yield
  \begin{equation*}
\|\bra{x}^{-\frac12}(\nabla-i\widehat{x}z)R_{0}(z^{2})f\|_{\ell^{\infty}L^{2}}
   \le    C\|(\nabla-i \widehat{x}z)R_{0}(z^{2})f\|_{Y}
    \le C\|f\|_{Y^{*}}\le C \|\bra{x}^{\frac 12}f\|_{\ell^{1}L^{2}}.
  \end{equation*}
  Real interpolation between the cases $s=\frac 12$ and $s=1$
  then gives \eqref{eq:R0rad}.
\end{proof}

We now prove the limiting absorption principle for $\Delta+az^2$. 
As for $R_{0}(z)$, the two extensions on the positive reals
for $\Im z \downarrow0$ and for $\Im z \uparrow 0$ are different,
but for simplicity we will use the same notation $R(z)$ for both.
The weighted $L^2$ space with norm
$\|\bra{x}^{s}u\|_{L^{2}}$ is denoted by $L^2_s$.

\begin{proposition}[]\label{pro:LAP}
  Assume $\bra{x}^{2+\delta}(a-1)\in L^{\infty}$ for some $\delta>0$.
  Then $R(z)$ satisfies the estimate
  \begin{equation}\label{eq:Rzlarge}
    \|R(z^{2})f\|_{X}+\|zR(z^{2})f\|_{Y}
    +\|\nabla R(z^{2})f\|_{Y}
    \le C(z)\|f\|_{Y^{*}}
  \end{equation}
  for all $\Im z\ge0$ and  for some continuous $C(z)$.
  Let $s'<s$ in $(1/2,1]$ and 
  $\bra{x}^{s+\frac 32+\delta}(a-1)\in L^{\infty}$. We then have
  \begin{equation}\label{eq:radest}
    \|\bra{x}^{s'-1}(\nabla-i \widehat{x}z)R(z^{2})f\|_{L^{2}}
    \le
    C(s',s,z)\|\bra{x}^{s} f\|_{L^{2}}.
  \end{equation}
  Moreover, for $f\in L^{2}_{s}$ there exists $g\in L^{2}_{s}$ 
  with $R(z^{2})f=R_{0}(z^{2})g$.
\end{proposition}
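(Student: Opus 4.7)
I will follow the Fredholm perturbation scheme outlined at the start of Section~\ref{sec:low_freq}, with reference operator $H_{0}=\Delta+z^{2}$ for which Lemma~\ref{lem:resetsR0} provides uniform resolvent estimates. The factorization
\begin{equation*}
  \Delta+az^{2}=(I+K(z))(\Delta+z^{2}),\qquad
  K(z)=(a-1)z^{2}R_{0}(z^{2}),
\end{equation*}
reduces the problem to inverting $I+K(z)$ on an appropriate Banach space. The natural candidate is $Y^{*}$: by \eqref{eq:R0first}, $R_{0}(z^{2})\colon Y^{*}\to X$ is bounded uniformly for $\Im z\ge 0$, and by the decay $|a-1|\lesssim \bra{x}^{-2-\delta}$ together with the embeddings \eqref{eq:XY-bounds}, multiplication by $(a-1)$ sends $X$ boundedly into $Y^{*}$. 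Approximating $a-1$ by compactly supported functions, and using interior $H^{2}$ regularity of $R_{0}(z^{2})$ together with Rellich--Kondrachov, I get that $(a-1)R_{0}(z^{2})$ is compact on $Y^{*}$, and a similar approximation argument shows that $z\mapsto K(z)$ is continuous in operator norm on $\overline{\mathbb{C}_{+}}$.

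The crucial step is the injectivity of $I+K(z)$. Suppose $(I+K(z))h=0$ and set $u=R_{0}(z^{2})h\in X$, so $(\Delta+az^{2})u=0$. When $\Im z>0$, we have $z^{2}\notin[0,\infty)$; then $u\in H^{2}$ by the mapping properties of $R_{0}(z^{2})$, and Proposition~\ref{pro:deltaa} (applied with $z^{2}$ in place of $z$) forces $u=0$. For $z\in\mathbb{R}$, $u$ solves the short-range Helmholtz equation $(\Delta+z^{2})u=-(a-1)z^{2}u$ with right hand side decaying faster than $|x|^{-2}$, and the representation $u=R_{0}(z^{2})h$ automatically encodes the Sommerfeld outgoing (or incoming) radiation condition. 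The classical Rellich uniqueness theorem combined with the unique continuation principle for the Laplacian then yields $u\equiv 0$, whence $h=(\Delta+z^{2})u=0$. This is the main obstacle; everything else is standard bookkeeping.

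Given injectivity, the analytic Fredholm alternative produces $(I+K(z))^{-1}\colon Y^{*}\to Y^{*}$ with operator norm $\le C(z)$ for some continuous $C$ on $\overline{\mathbb{C}_{+}}$. Defining $R(z^{2}):=R_{0}(z^{2})(I+K(z))^{-1}$, estimate \eqref{eq:Rzlarge} follows by composition with \eqref{eq:R0first}. This also yields the factorization $R(z^{2})f=R_{0}(z^{2})g$ with $g=(I+K(z))^{-1}f$. To see that $f\in L^{2}_{s}$ implies $g\in L^{2}_{s}$, I rewrite $g=f-(a-1)z^{2}R(z^{2})f$; the stronger decay $|\bra{x}^{s}(a-1)|\lesssim\bra{x}^{-3/2-\delta}$ combined with \eqref{eq:XY-bounds} and \eqref{eq:Rzlarge} yields
\begin{equation*}
  \|\bra{x}^{s}(a-1)R(z^{2})f\|_{L^{2}}\lesssim \|R(z^{2})f\|_{X}\le C(z)\|f\|_{Y^{*}}\lesssim C(z)\|\bra{x}^{s}f\|_{L^{2}},
\end{equation*}
where the last inequality uses $s>1/2$.

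Finally, to prove \eqref{eq:radest}, I choose an intermediate weight $s''\in(s',s)$ and apply \eqref{eq:R0rad} to $R_{0}(z^{2})g$ at weight $s''$:
\begin{equation*}
  \|\bra{x}^{s''-1}(\nabla-i\widehat{x}z)R(z^{2})f\|_{\ell^{\infty}L^{2}}\lesssim \|\bra{x}^{s''}g\|_{\ell^{1}L^{2}}.
\end{equation*}
Two dyadic Cauchy--Schwarz bounds then convert this into \eqref{eq:radest}: the loss from $\|\cdot\|_{\ell^{\infty}L^{2}}$ at weight $s''-1$ down to $\|\cdot\|_{L^{2}}$ at weight $s'-1$ is absorbed by the geometric series $\sum_{j}2^{2j(s'-s'')}$ (positive gap $s''-s'$), and the loss from $\|\bra{x}^{s''}g\|_{\ell^{1}L^{2}}$ to $\|\bra{x}^{s}g\|_{L^{2}}$ is absorbed by $\sum_{j}2^{-2j(s-s'')}$ (positive gap $s-s''$). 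Combining with the $L^{2}_{s}$ bound for $g$ yields the desired inequality with some constant $C(s,s',z)$.
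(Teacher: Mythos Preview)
Your argument is correct and follows essentially the same Fredholm perturbation scheme as the paper: the same factorization $\Delta+az^{2}=(I+K(z))(\Delta+z^{2})$, compactness of $K(z)$ on $Y^{*}$, injectivity by Proposition~\ref{pro:deltaa} off the spectrum and by a Rellich-type uniqueness argument on the real axis (the paper cites a specific lemma from \cite{DAncona20} for the latter, but your invocation of Rellich plus unique continuation is the same mechanism). Two minor executional differences are worth noting: you obtain $g\in L^{2}_{s}$ by the explicit identity $g=f-(a-1)z^{2}R(z^{2})f$ together with the already-proved bound \eqref{eq:Rzlarge}, whereas the paper re-runs the Fredholm argument to show $K(z)$ is compact on $L^{2}_{s}$; and your use of an intermediate weight $s''\in(s',s)$ to pass between the dyadic $\ell^{\infty}L^{2}$/$\ell^{1}L^{2}$ norms of \eqref{eq:R0rad} and the plain weighted $L^{2}$ norms in \eqref{eq:radest} makes explicit a conversion the paper leaves implicit. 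Both shortcuts are legitimate and slightly streamline the proof.
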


\begin{proof}%[Proof of ...]
  We shall use the inequalities
  \begin{equation}\label{eq:XY-bounds1}
    \|u\|_{Y^{*}}\lesssim\|\bra{x}^{1+\delta}u\|_{Y},
    \qquad
    \|u\|_{Y^{*}}\lesssim\|\bra{x}^{2+\delta}u\|_{X}.
  \end{equation}
valid for   any $\delta>0$, see \eqref{eq:XY-bounds}.
  Let $K(z)=(a-1)z^{2}R_{0}(z^{2})$.  The operator 
  $\bra{x}^{-2-\delta}zR_{0}(z^{2})$ is compact on $Y^{*}$
  and bounded uniformly in $z$,  as it follows from estimates  
  \eqref{eq:R0first} and  \eqref{eq:XY-bounds1} 
  (or as a special case of Lemma 3.1 in \cite{DAncona20}).
  Writing $K(z)=\bra{x}^{2+\delta}
    (a-1)z\cdot\bra{x}^{-2-\delta}zR_{0}(z^{2})$
  we see that $K(z):Y^{*}\to Y^{*}$ is also a compact operator
  for each $z\in \mathbb{C}$
  whose operator norm is locally bounded in $z\in \mathbb{C}$.
  % By estimate \eqref{eq:R0first} for the free resolvent
  % we know that $zR_{0}(z^{2}):Y^{*}\to Y$ is (uniformly) bounded.
  % This shows that the operator
  % $K(z)=(a-1)z^{2}R_{0}(z^{2})$ is bounded on $Y^{*}$
  % under our decay assumption on $a$
  % (it would be enough to assume $\bra{x}^{1+}(a-1)\in L^{\infty}$)
  % with a constant depending continuously on $z$.
  % Since the
  % operator $\bra{x}^{-2-\delta}R_{0}(z)$ is compact on $Y^{*}$
  % (see e.g.~Lemma 3.1 in \cite{DAncona20}),
  % hence $K:Y^{*}\to Y^{*}$ is also compact.

  We next prove that $I+K(z):Y^{*}\to Y^{*}$ is injective.
  Thus assume $(I+K(z))v=0$ for some $v\in Y^*\hookrightarrow L^2$.
 Let $u=R_{0}(z^{2})v$ so that $u\in Y\cap H^{2}_{loc}$ if $z\neq 0$, 
 $u\in X\cap H^{2}_{loc}$ if $z=0$, and $u$
  satisfies $\Delta u+az^{2}u=0$. If $z=0$ this means that $u\in X$
  is harmonic, hence $v=0$. If $\Im z^2\neq0$ or $z^{2}<0$, 
  we have  $u=R_{0}(z^{2})v\in H^2$.     
Proposition \ref{pro:deltaa} now yields $u=0=v$.
Finally, if $z^{2}=\lambda>0$  then $u$ satisfies
  \begin{equation*}
    (\Delta +\lambda)u+\lambda(a-1)u=0.
  \end{equation*}
 Regarding $W(x)=\lambda(a-1)$ as a potential with 
  $|x|^{2}\bra{x}^{\delta/2}W\in \ell^{1}L^{\infty}$,
 Lemma~3.3 in \cite{DAncona20} shows that $v=0$. Then
  \eqref{eq:Rzlarge} follows from \eqref{eq:R0first} as before 
	by analytic Fredholm theory  and the representation
  $R(z^{2})=R_{0}(z^{2})(I+K(z))^{-1}$.

  Consider now the radiation estimate \eqref{eq:radest}
assuming  $\bra{x}^{s+\frac 32+\delta}(a-1)\in L^{\infty}$.
 We transfer estimate \eqref{eq:R0rad} for $R_{0}$
  to the perturbed resolvent $R$, using the
  representation $R(z^{2})=R_{0}(z^{2})(I+K(z))^{-1}$. 
  In view of \eqref{eq:dyadic} and \eqref{eq:R0rad},  
  we only have to prove that $I+K(z)$ is an invertible operator on the
  weighted space $L^{2}_{s}$ with norm $\|\bra{x}^{s}f\|_{L^{2}}$. 
   Note that we have already shown that $I+K(z)$
  is injective on the larger space $Y^{*}$. It thus it remains
  to check that $K(z)$ is compact on $L^{2}_{s}$. We can write
  % \begin{equation*}
  %   \|Kf\|_{L^{2}_{s}}\le
  %   \|\bra{x}^{s+\frac 32+\delta}(a-1)z\|_{L^{\infty}}
  %   \|\bra{x}^{-\frac 32-\delta}R_{0}(z)f\|_{L^{2}}
  %   \le C(a,z)\|\bra{x}^{-2-\delta/2}R_{0}(z)f\|_{Y^{*}}
  % \end{equation*}
  \begin{equation*}
    K(z)=\bra{x}^{\frac 32+\delta}(a-1)z
    \cdot
    \bra{x}^{\frac 12}
    \cdot
    \bra{x}^{-2-\delta }zR_{0}(z^{2}).   
 %   \bra{x}^{\frac 12-\frac \delta 2}
 %   \cdot
 %   \bra{x}^{-2-\frac \delta 2}zR_{0}(z^{2}).
  \end{equation*}
Observe that $\bra{x}^{\frac 32+\delta}(a-1)z$
  is a bounded operator from $L^{2}$ to $L^{2}_{s}$ since
  $\bra{x}^{s+\frac 32+\delta}(a-1)\in L^{\infty}$,
  $\bra{x}^{\frac 12}$ is bounded from
  $Y^{*}$ to $L^{2}$ by \eqref{eq:dyadic}, and 
 $\bra{x}^{-2-\delta }R_{0}(z)$ is compact on $Y^{*}$
  because of \eqref{eq:R0first} and \eqref{eq:XY-bounds1}. 
  Summing up, $K(z):Y^{*}\to L^{2}_{s}$ is compact and 
  due to the embedding $L^{2}_{s}\hookrightarrow Y^{*}$ 
  it is also compact on $L^{2}_{s}$.

  The final claim is a consequence of the representation
  $R(z^{2})=R_{0}(z^{2})(I+K(z))^{-1}$ and of the bijectivity
  of $I+K(z)$ on  $L^{2}_{s}$ for the above values of $s$.
\end{proof}

Note that writing $\Delta R(z)f=f-azR(z)f$,
Proposition \ref{pro:LAP} also yields
\begin{equation*}
  \|\Delta R(z^{2})f\|_{Y}\le\|f\|_{Y}+|z|C(z)\|f\|_{Y^{*}}
  \le C_{1}(z)\|f\|_{Y^{*}}
\end{equation*}
where we used the inequality $\|f\|_{Y}\lesssim\|f\|_{Y^{*}}$,
cf.\ \eqref{eq:dyadic}.
This gives the complete estimate
\begin{equation}\label{eq:H2estR}
  \|R(z^{2})f\|_{X}+\|zR(z^{2})f\|_{Y}
  +\|\nabla R(z^{2})f\|_{Y}+\|\Delta R(z^{2})f\|_{Y}
  \le C(z)\|f\|_{Y^{*}}.
\end{equation}

Finally we consider the case of the full operator
\begin{equation*}
  L(z)=\Delta+a(x)z^{2}+b(x,\partial).
\end{equation*}
In the following, we actually treat a more general 
\emph{matrix operator}
\begin{equation*}
  L(z)=I_{3}\Delta+I_{3}a(x)z^{2}+b(x,\partial).
\end{equation*}
Here $I_{3}$ is the $3\times3$ identity matrix so that
the principal part is a diagonal Laplacian
operator. Moreover, $b(x,\partial)$ is a $3\times 3$ matrix 
first-order operator subject to conditions as in the scalar
case. It will be clear from the proofs that  in our setting
no change is required in the matrix case.

In order to perform the usual injectivity step,
we shall make the following spectral assumption
saying that $L(z)$ has no resonances or eigenvalues.
 See Remark~\ref{rem:spec} and 
Propositions~\ref{pro:spectralass1}, \ref{pro:spectralass2},
and \ref{pro:spectralass3}
below for a closer examination of these conditions.
There we show that these conditions only lead to mild extra conditions
when  establishing our main results on the Strichartz estimates for the Maxwell 
system. Actually, these extra conditions are only needed to exclude a resonance 
at $z=0$, see Proposition~\ref{pro:spectralass3}.

\textbf{Spectral assumption (S)}.
Let $\Im z\ge0$.
Then $L(z)u=0$ implies $u=0$, provided
\begin{enumerate}
  \item 
  either $z\not\in \mathbb{R}$ and $u\in H^{2}$
  (no eigenvalues)
  \item 
  or
  $z\in \mathbb{R}$ and $u=R_{0}(z^{2})f$ for some 
  $\bra{x}^{\frac 12+}f\in L^{2}$
  (no embedded resonances).
\end{enumerate}
Note that $u\in R_{0}(z^{2})Y^{*}$ satisfies
$\nabla u, \Delta u\in Y$, and $u\in X$
(and $u\in Y$ if $z\neq0$) by Lemma~\ref{lem:resetsR0}.

We briefly discuss condition (2) for $z=0$ (no resonances at 0).
It is necessary since the presence of resonances
competes with dispersion, a well studied effect since
\cite{JensenKato79-a}. 
%Here we use the following notion.
% \begin{definition}[]\label{def:res0}
%   We say that an operator $\Delta+b(x,\partial)$
%   has a \emph{resonance at 0} if there exists $u$
%   such that $(\Delta+b)u=0$, where
%   $u=\Delta^{-1}f$ and
%   $\bra{x}^{\frac 12+\sigma}f\in L^{2}(\mathbb{R}^{3})$
%   for some $\sigma>0$.
% \end{definition}
If $\bra{x}^{\frac 12+\sigma}f\in L^{2}$ then
$u=\Delta^{-1}f$ satisfies 
$\bra{x}^{-\frac 12-\sigma'}u\in L^{2}$ for all $\sigma'>0$,
thus our non-resonance assumption is slightly weaker than
the usual one.

\begin{remark}[]\label{rem:spec}
  Assumption (S) is satisfied for $z$ sufficiently large
  with respect to the coefficients. This is a consequence
  of estimate \eqref{eq:largefreqfin} in the next section.
\end{remark}

Moreover, the non-resonance assumption
is \emph{generic} in the following sense.
We take a parameter $\omega\in \mathbb{R}\setminus0$ and
consider the modified operator $\Delta+\omega b$.
Under the previous assumptions on $\epsilon$ and $\mu$, then the
set of values $\omega$ such that $\Delta+\omega b$ has
a resonance at 0 is \emph{discrete}. Indeed,
one easily  checks that
$0$ is a resonance for $\Delta+\omega b$ if and only if
$-\omega^{-1}$ is an eigenvalue for the compact
operator $b(x,\partial)\Delta^{-1}$ on the weighted
$L^{2}$ space with weight $\bra{x}^{\frac 12+\sigma}$.

\begin{proposition}[]\label{pro:fullopres}
  Let $L(z)=I_{3}\Delta+I_{3}a(x)z^{2}+b(x,\partial)$ with
  $|x|^{2}\bra{x}^{\delta}(a-1)\in L^{\infty}$ and
  $b(x,\partial)$ a first-order matrix
  differential operator satisfying
  \begin{equation}\label{eq:assbpacom}
    |b(x,\partial)v|\le
    C_{b}(\bra{x}^{-2-\delta}|v|+\bra{x}^{-1-\delta}|\nabla v|)
  \end{equation}
  for some $C_{b},\delta>0$.
  Assume $L(z)$ satisfies the spectral assumption (S).
  Then for $\Im z\ge0$ we have
  \begin{equation}\label{eq:LzestR}
    \|u\|_{X}+\|zu\|_{Y}
    +\|\nabla u\|_{Y}+\|\Delta u\|_{Y}
    \le C(z)\|\bra{x}^{\frac 12+}L(z)u\|_{L^{2}}.
  \end{equation}
\end{proposition}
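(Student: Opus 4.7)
The plan is to reduce the estimate for $L(z)$ to the one for $R(z^{2})=(\Delta+az^{2})^{-1}$ already obtained in \eqref{eq:H2estR}, using the compact perturbation / analytic Fredholm scheme that recurs throughout this section. The key observation is the factorisation
\begin{equation*}
  L(z) = (I+K(z))(\Delta+az^{2}), \qquad K(z):=b(x,\partial)R(z^{2}),
\end{equation*}
so that if $I+K(z)$ can be inverted on a space containing $\bra{x}^{1/2+\sigma}L^{2}\hookrightarrow Y^{*}$ (cf.~\eqref{eq:XY-bounds}), then $u=R(z^{2})(I+K(z))^{-1}f$ solves $L(z)u=f$ and inherits \eqref{eq:H2estR} with $\|f\|_{Y^{*}}$ on the right, which in turn is controlled by $\|\bra{x}^{1/2+}f\|_{L^{2}}$.

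The first step is to show that $K(z):Y^{*}\to Y^{*}$ is \emph{compact} with norm locally bounded in $z$. Using \eqref{eq:assbpacom}, the bounds \eqref{eq:H2estR} on $R(z^{2})g$ in $X$ and on $\nabla R(z^{2})g$ in $Y$, and the embeddings \eqref{eq:XY-bounds}--\eqref{eq:XY-bounds1}, the decay factors $\bra{x}^{-2-\delta}$ and $\bra{x}^{-1-\delta}$ in \eqref{eq:assbpacom} provide slack to bound $K(z)g$ in $Y^{*}$ by $C(z)\|g\|_{Y^{*}}$, and a Rellich-type factorisation through a weighted $L^{2}_{s}$ space, as in the compactness step of Proposition~\ref{pro:LAP}, upgrades boundedness to compactness. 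The second step is injectivity of $I+K(z)$, which is where the spectral assumption (S) enters. If $(I+K(z))g=0$ and $u=R(z^{2})g$, then $L(z)u=0$. For $z\notin\mathbb{R}$, the identity $(\Delta+az^{2})u=-b(x,\partial)u$ together with $u\in X$, $\nabla u\in Y$ and \eqref{eq:assbpacom} yields $b(x,\partial)u\in L^{2}$, so $u\in H^{2}$ by Proposition~\ref{pro:deltaa}, and (S)(1) forces $u=0$. For $z\in\mathbb{R}$ (including $z=0$), the same pointwise bound shows $g=-b(x,\partial)u\in L^{2}_{s}$ for some $s\in(1/2,1/2+\delta)$; the final claim of Proposition~\ref{pro:LAP} then produces $\tilde g\in L^{2}_{s}$ with $u=R_{0}(z^{2})\tilde g$, and (S)(2) forces $u=0$, hence $g=0$.

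Norm continuity of $z\mapsto K(z)$ on $Y^{*}$ (inherited from that of $R(z^{2})$ through the continuity statement in Proposition~\ref{pro:LAP}) combined with analytic Fredholm theory then yields $(I+K(z))^{-1}$ on $Y^{*}$ with locally bounded operator norm, and concatenating with \eqref{eq:H2estR} produces \eqref{eq:LzestR}. The main obstacle I foresee is the injectivity step, specifically verifying that the candidate null vector $u$ lies in the functional class prescribed by (S): in the real--$z$ regime one must bootstrap $g\in Y^{*}$ to $g\in L^{2}_{s}$ for some $s>1/2$ in order to apply (S)(2), and this is precisely where the decay rates $2+\delta$ and $1+\delta$ in \eqref{eq:assbpacom} are used critically.
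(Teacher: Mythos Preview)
Your proposal is correct and follows the same compact-perturbation / analytic Fredholm scheme as the paper. The only difference is the choice of ambient space: the paper runs the Fredholm argument directly on $L^{2}_{1/2+}$ rather than on $Y^{*}$, which makes the injectivity step cleaner --- starting from $f\in L^{2}_{1/2+}$, the final claim of Proposition~\ref{pro:LAP} immediately writes $u=R(z^{2})f=R_{0}(z^{2})g$ with $g\in L^{2}_{1/2+}$, so (S)(2) applies without the bootstrap you describe, and for $z\notin\mathbb{R}$ one simply uses $Y^{*}\subset L^{2}$ together with $R(z^{2}):L^{2}\to H^{2}$ to place $u\in H^{2}$.
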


\begin{proof}%[Proof of ...]
  As before we write
  \begin{equation}\label{eq:bsL}
    L(z)=(I+K(z))(\Delta+az^{2}),
    \qquad
    K(z)=b(x,\partial)R(z^{2}),
  \end{equation}
  where $R(z)=(\Delta+az^{2})^{-1}$ is the operator 
  constructed in Proposition \ref{pro:LAP}.
  Estimates  \eqref{eq:XY-bounds} and \eqref{eq:H2estR} and
  the assumptions on the coefficients imply the compactness
  of $K(z)$ as an operator on $L^{2}_{1/2+}$
  and the continuity of the map
  $z \mapsto K(z)$ in the operator norm.

  To prove injectivity of $I+K(z)$, assume
  $f+K(z)f=0$ for some $f\in L^{2}_{1/2+}$.
  Let $u=R(z^{2})f$ so that $u$ solves $L(z)u=0$.
  Note that by the final claim of Proposition \ref{pro:LAP}
  we also have $u=R_{0}(z^{2})g$ for some $g\in L^{2}_{1/2+}$.
  If $z\in \mathbb{R}$,  assumption (S) yields $u=0$
  and hence $f=(\Delta+az^{2})u= 0$.
  If $z\not\in \mathbb{R}$, since $Y^{*}\subset L^{2}$
  and $R(z^{2}):L^{2}\to H^{2}$, we see that $u$ is actually an
  eigenfunction of $L(z)$, and again by (S) we deduce
  $u=0$. The rest of the proof is similar to the previous ones.
\end{proof}

 The spectral assumption (S) holds
 if $a$ and $b$ have some additional structure that is present in our 
 main goal, the Maxwell system in the second-order form \eqref{eq:waveD1}.
 We first consider part (1) of (S) and exclude eigenvalues in the next result. Observe that
 the assumptions \eqref{eq:choab} and \eqref{eq:decayem} imply condition
 \eqref{eq:assbpacom} from  Proposition \ref{pro:fullopres}, cf.\  \eqref{eq:defbxD}. 
 This fact is used below several times.

\begin{proposition}[]\label{pro:spectralass1}
  Assume that the coefficients in  Proposition 
  \ref{pro:fullopres} have the form
  \begin{equation}\label{eq:choab}
      \textstyle
      a(x)=\epsilon(x)\mu(x),
      \qquad
      b(x,\partial)u=
      \nabla \times \nabla \times u-
      \epsilon(x)\mu(x)\nabla \times
      (\frac 1{\mu(x)} \nabla \times \frac1{\epsilon(x)}u),
    \end{equation}
  where $\epsilon$ and $\mu$ are bounded and 
  uniformly strictly positive. 
  Then property (1) in the spectral assumption (S) is satisfied.
\end{proposition}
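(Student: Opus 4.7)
The plan is to exploit the Maxwell structure encoded in the choice of $a$ and $b$. Using $\Delta v = -\nabla\times\nabla\times v + \nabla(\nabla\cdot v)$, the equation $L(z)u=0$ becomes
\begin{equation*}
  \nabla(\nabla\cdot u) + \epsilon\mu z^2 u - \epsilon\mu\nabla\times\bigl(\tfrac{1}{\mu}\nabla\times\tfrac{1}{\epsilon}u\bigr)=0.
\end{equation*}
This is essentially the stationary Maxwell problem with a spurious gradient correction which will vanish after we prove the divergence constraint.

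First I would show that $w:=\nabla\cdot u = 0$. Dividing the displayed equation by $\epsilon\mu$ and taking $\nabla\cdot$ on both sides, the curl term drops because $\nabla\cdot(\nabla\times\cdot)=0$, and what remains is the scalar elliptic equation
\begin{equation*}
  \nabla\cdot\bigl(\tfrac{1}{\epsilon\mu}\nabla w\bigr) + z^2 w = 0
\end{equation*}
on $\mathbb{R}^3$. Since $u\in H^2$ we have $w\in H^1$, so I can test with $\bar w$ and integrate to get
\begin{equation*}
  -\int \tfrac{1}{\epsilon\mu}|\nabla w|^2\,dx + z^2\int |w|^2\,dx = 0.
\end{equation*}
If $\Re z\neq 0$, taking the imaginary part forces $w=0$ because $\Im(z^2)=2\Re z\,\Im z\neq 0$. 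If instead $z=i\beta$ with $\beta>0$, then $z^2$ is real negative and both terms have the same sign, so again $w=0$.

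With the divergence constraint $\nabla\cdot u=0$ in hand, the equation collapses to
\begin{equation*}
  z^2 u = \nabla\times\bigl(\tfrac{1}{\mu}\nabla\times\tfrac{1}{\epsilon}u\bigr).
\end{equation*}
Setting $E:=u/\epsilon$, I then pair with $\bar E$ (i.e.\ multiply by $\bar u/\epsilon$) and integrate by parts; the boundary terms vanish because $u\in H^2$ and, hence, $E$ and $\nabla\times E$ lie in $L^2$ with enough decay. This produces the identity
\begin{equation*}
  z^2\int \epsilon |E|^2\,dx = \int \tfrac{1}{\mu}|\nabla\times E|^2\,dx,
\end{equation*}
whose right-hand side is real and non-negative. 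Taking the imaginary part gives $\Im(z^2)\int\epsilon|E|^2=0$, so $E=0$ whenever $\Re z\neq 0$; and for $z=i\beta$ the left-hand side is $\leq 0$ while the right-hand side is $\geq 0$, again forcing $E=0$. In both cases $u=\epsilon E=0$, which is (1) of (S).

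The only delicate point is justifying that the divergence step really produces an equation for $w$ in a function class to which the variational argument applies, and that the curl-term integration by parts is licit under the mere $L^\infty$/positivity assumptions on $\epsilon$ and $\mu$; this is easily handled by the $H^2$ regularity of $u$ together with the standing hypothesis \eqref{eq:assbpacom} on $b(x,\partial)$, which encodes the needed regularity of $\epsilon,\mu$ implicitly through the coefficients $p,q$ of \eqref{eq:defbxD}.
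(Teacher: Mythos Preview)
Your proof is correct. The divergence step is identical to the paper's. For the second step, however, the paper takes a different and more elaborate route: it introduces the pair $(\E,\H)$ with $\H=-(i\mu z)^{-1}\nabla\times\E$, writes the stationary Maxwell system, and derives the Rellich--Poynting identity
\[
\int_{|x|=R}\bigl[|\widehat{x}\times\E|^{2}+|\H|^{2}\bigr]dS
+2\Im z\int_{|x|\le R}\bigl[\epsilon|\E|^{2}+\mu|\H|^{2}\bigr]dx
=\int_{|x|=R}|\widehat{x}\times\E+\H|^{2}dS,
\]
then integrates in $R$ to force $\E=\H=0$. Your direct energy argument---pairing $z^{2}u=\nabla\times(\tfrac{1}{\mu}\nabla\times\tfrac{1}{\epsilon}u)$ with $\bar u/\epsilon$ to obtain $z^{2}\int\epsilon|E|^{2}=\int\tfrac{1}{\mu}|\nabla\times E|^{2}$---is considerably shorter and suffices here because $\Im z>0$ makes $z^{2}$ non-real (or real negative). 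The paper's identity is not wasted effort, though: it is the key tool reused in the next proposition for the embedded-resonance case $z\in\mathbb{R}\setminus\{0\}$, where $z^{2}>0$ and your energy identity becomes non-coercive. So your argument is more economical for this statement in isolation, while the paper's sets up machinery needed immediately afterwards.
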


\begin{proof}  
In the present case the equation $L(z)u=0$ can be rewritten as
  \begin{equation*}
    \textstyle
    z^{2}\epsilon \mu u+\Delta u+
    \nabla \times \nabla \times u-
    \epsilon\mu\nabla \times
    (\frac 1\mu \nabla \times \frac1{\epsilon}u)
    =0
  \end{equation*}
  or equivalently
  \begin{equation}\label{eq:resrewr}
    \textstyle
    z^{2} u
    +\frac{1}{\epsilon \mu}
    \nabla(\nabla \cdot u)-
    \nabla \times
    (\frac 1\mu \nabla \times \frac1{\epsilon}u)
    =0.
  \end{equation}
Assume that $z\not\in \mathbb{R}$ and
  $u\in H^{2}$ is a solution of \eqref{eq:resrewr}.
  By taking the divergence of the equation, we see that the 
  function $\phi=\nabla \cdot u$ satisfies
  \begin{equation*}
    \textstyle
    z^{2} \phi+\nabla \cdot(\frac{1}{\epsilon \mu}\nabla \phi)=0.
  \end{equation*}
  As $z\not\in \mathbb{R}$, this equation implies
  $\phi=0$ (i.e., $u$ is divergence free)
  since the operator 
  $\nabla \cdot(\frac{1}{\epsilon \mu}\nabla \phi)$
  is selfadjoint and non negative
  as soon as the (real valued) coefficient
  $\epsilon\mu$ is bounded and strictly positive.
  Thus the equation $L(z)u=0$ reduces to
  \begin{equation}\label{eq:resrewr2}
    \textstyle
    z^{2} u
    =
    \nabla \times
    (\frac 1\mu \nabla \times \frac1{\epsilon}u),
    \qquad
    \nabla \cdot u=0,
    \qquad
    u\in H^{2}.
  \end{equation}
  It is now convenient to set
  \begin{equation}\label{eq:defH}
    \E=u/\epsilon,
    \qquad
    \H=-(i\mu z)^{-1}\nabla \times \E,
  \end{equation}
  so that $(\E,\H)$ are $H^{1}$ solutions of the
  stationary Maxwell system
  \begin{equation}\label{eq:maxwellhelm}
    i\epsilon z \E=\nabla \times \H,
    \quad
    i\mu z\H=-\nabla \times \E,
    \quad
    \nabla \cdot(\epsilon \E)=\nabla \cdot(\mu \H)=0.
  \end{equation}
 We integrate the identity
    \begin{equation*}
      |\widehat{x}\times \E|^{2}+|\H|^{2}-
      |\widehat{x}\times \E+\H|^{2}=
      -2\Re(\widehat{x}\cdot \E \times \overline{\H})
    \end{equation*}
    over a sphere $|x|=R$. The divergence theorem then yields
    \begin{equation*}
      \textstyle
      \int_{|x|=R}
      [|\widehat{x}\times \E|^{2}+|\H|^{2}-
      |\widehat{x}\times \E+\H|^{2}]dS
      =-2\Re
      \int_{|x|\le R}\nabla \cdot(\E \times \overline{\H})dx.
    \end{equation*}
 Writing
    \begin{equation*}
      \nabla \cdot(\E \times \overline{\H})=
      \overline{\H} \cdot(\nabla \times {\E})
      -\E \cdot(\nabla \times \overline{\H})=
      -iz \mu |\H|^{2}+i \overline{z} \epsilon|\E|^{2},
    \end{equation*}
  we deduce 
    \begin{equation}\label{eq:partid}
      \textstyle
      \int_{|x|=R}
      [|\widehat{x}\times \E|^{2}+|\H|^{2}]d S
      \textstyle
      +
      2\Im z
      \int_{|x|\le R}[\epsilon|\E|^{2}+\mu|\H|^{2}]dx
      =
      \int_{|x|= R}
      |\widehat{x}\times \E+\H|^{2}d S.
    \end{equation}
    If we integrate in $R$ from 0 to $+\infty$, the RHS
    gives a finite contribution since $\E,\H\in L^{2}$.
    As a consequence the second integral on the LHS
    must be 0 (recall that $\Im z>0$). We have proved that
    $\E=\H=0$ and in particular $u=0$.
\end{proof}
We next treat resonances at $z^2>0$ which requires more sophisticated tools.

\begin{proposition}[]\label{pro:spectralass2}
Assume that the coefficients in  Proposition \ref{pro:fullopres} have the form
\eqref{eq:choab} and satisfy  $\epsilon,\mu>0$ as well as
  \begin{equation}\label{eq:decayem}
    \bra{x}^{2+\delta}(|\epsilon-1|+|\mu-1|+|D^{2}\epsilon|
      +|D^{2}\mu|)
    +
    \bra{x}^{1+\delta}(|\nabla\epsilon|+|\nabla\mu|)
    \in L^{\infty}.
  \end{equation}
Then also property (2) in the 
spectral assumption  (S) is satisfied if $z\in \mathbb{R}\setminus \{0\}$.
\end{proposition}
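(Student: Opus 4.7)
The strategy parallels the proof of Proposition \ref{pro:spectralass1}, but adapted to $\Im z = 0$ where the energy identity \eqref{eq:partid} degenerates (the volume term vanishes). The idea is first to reduce to a divergence-free $u$, then to apply a Rellich-Kato type uniqueness to the resulting short-range perturbation of the Helmholtz operator. For the first step, I take the divergence of the identity \eqref{eq:resrewr} satisfied by $u$. Because $\nabla\cdot(\nabla\times\cdot)=0$, the curl-curl term drops, and $\phi = \nabla\cdot u$ satisfies the scalar self-adjoint Helmholtz equation $z^2\phi + \nabla\cdot\bigl((\epsilon\mu)^{-1}\nabla\phi\bigr) = 0$. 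Since $u = R_0(z^2)f$ with $\bra{x}^{1/2+}f \in L^2$, Lemma \ref{lem:resetsR0} and Proposition \ref{pro:LAP} show that $\phi$ belongs to $Y$ and inherits the outgoing radiation structure \eqref{eq:R0rad}. Combined with the short-range decay $(\epsilon\mu)^{-1}-1 = O(\bra{x}^{-2-\delta})$ from \eqref{eq:decayem}, classical absence of embedded eigenvalues for scalar short-range perturbations of $-\Delta$ (in the spirit of Kato, or along the lines of Lemma 3.3 in \cite{DAncona20}) forces $\phi \equiv 0$.

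For the second step, with $\nabla \cdot u = 0$, the equation $L(z)u = 0$ reduces to the componentwise Helmholtz system
\begin{equation*}
  (\Delta + z^2)u = -(\epsilon\mu - 1)z^2 u - b(x,\partial) u, \qquad \nabla \cdot u = 0,
\end{equation*}
where the right-hand side is a short-range first-order perturbation: $|V_1| \lesssim \bra{x}^{-1-\delta}$ and $|V_2| \lesssim \bra{x}^{-2-\delta}$, by \eqref{eq:assbpacom} together with \eqref{eq:decayem}. The representation $u = R_0(z^2) g$ with $g \in L^2_{1/2+}$ provided by the final claim of Proposition \ref{pro:LAP} yields, via \eqref{eq:R0rad}, the quantitative outgoing Sommerfeld behavior $\bra{x}^{s'-1}(\nabla - iz\widehat{x})u \in \ell^\infty L^2$ for every $s' < s$ in $(1/2,1]$. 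A standard Rellich-type virial argument, exploiting this sharp radial decay, then forces $u$ to vanish outside a compact set, after which strong unique continuation for second-order elliptic systems with bounded principal coefficients propagates the vanishing to $u \equiv 0$ on $\mathbb{R}^3$.

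The main obstacle is the first-order perturbation $b(x,\partial)$, whose decay $\bra{x}^{-1-\delta}$ is critical for absorption in a virial identity tested against $(\partial_r + \tfrac{1}{r} - iz)\bar u$. The divergence-free constraint $\nabla \cdot u = 0$ is what makes this work: it implies $\nabla \times \nabla \times u = -\Delta u$, so by \eqref{eq:defbxD} the effective first-order operator collapses essentially to $(p+q)\cdot \nabla u$ plus zeroth-order terms, whose contribution to the virial is integrable thanks to the bound $|\nabla\epsilon|+|\nabla\mu| \lesssim \bra{x}^{-1-\delta}$ from \eqref{eq:decayem}. Combined with the sharp exterior radiation estimate \eqref{eq:R0rad}, this closes the Rellich step and completes the argument.
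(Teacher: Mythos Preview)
Your reduction to the divergence-free case (Step 1) matches the paper's argument: the paper writes $\phi = \sqrt{\epsilon\mu}\,\psi$ to transform the divergence equation into $(\Delta+\lambda)\psi + c(x)\psi = 0$ with $|x|^2\bra{x}^{\delta'}c \in \ell^1 L^\infty$, then invokes Lemma 3.3 of \cite{DAncona20}. Your route via the self-adjoint form is equivalent.

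For Step 2, however, your plan diverges from the paper and has genuine gaps. First, your claim that under $\nabla\cdot u = 0$ the operator $b(x,\partial)$ ``collapses essentially to $(p+q)\cdot\nabla u$ plus zeroth-order terms'' is incorrect: from \eqref{eq:defbxD} the first-order part of $bu$ retains the full matrix structure $(p+q)_j\partial_i U_j - ((2p+q)\cdot\nabla)U_i$, not a scalar drift. This does not by itself break the scheme (the coefficients still decay like $\bra{x}^{-1-\delta}$), but the virial computation is not as clean as you suggest.

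More seriously, your appeal to ``a standard Rellich-type virial argument'' to conclude that $u$ vanishes outside a compact set is not substantiated. A Rellich identity combined with the radiation condition typically upgrades a resonance to a genuine $L^2$ eigenfunction; it does \emph{not} by itself give compact support. To pass from $u\in L^2$ to $u=0$ for the system $(\Delta+\lambda)u + b_1\cdot\nabla u + b_0 u = 0$ with $|b_1|\lesssim\bra{x}^{-1-\delta}$ is exactly the borderline long-range case for first-order terms, and this step requires a Carleman estimate, not soft unique continuation.

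The paper's route is different and more explicit. After $\nabla\cdot u = 0$, it passes to the Maxwell pair $(\E,\H)$ as in \eqref{eq:defH} and uses the surface identity \eqref{eq:partid} (with $\Im z = 0$, so the volume term drops) together with the radiation estimate \eqref{eq:R0rad} to bootstrap the decay of $u$ to $\bra{x}^{-1/2+}u \in L^2$. It then applies the Carleman estimate of Koch--Tataru \cite{KochTataru06-a} to $\Delta+\lambda$, checking that the lower-order terms coming from $b(x,\partial)$ and $(\epsilon\mu-1)$ can be absorbed by the Carleman weight for large $\tau$. This single Carleman step replaces both your ``Rellich virial'' and your ``strong unique continuation'' and handles the critical $\bra{x}^{-1-\delta}$ first-order decay directly.
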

\begin{proof}
Let  $z\in \mathbb{R}\setminus \{0\}$ so that $\lambda=z^{2}>0$.
We take a solution
  $u$ of $L(z)u=0$ of the form $u=R_0(\lambda)f$ for some  
  $f\in L^{2}_{1/2+}\hookrightarrow Y^*$. 
  In particular, from \eqref{eq:H2estR} with $a=1$ we know
  that $u,\nabla u,\Delta u\in Y$.

  Proceeding as in the  previous proposition, we see that 
  $\phi=\nabla \cdot u\in Y$ satisfies
  \begin{equation*}
    \textstyle
    \lambda \phi+\nabla \cdot(\frac{1}{\epsilon \mu}\nabla \phi)=0
  \end{equation*}
  which can be written as
  \begin{equation*}
    (\Delta+\lambda) \phi
    -\nabla \beta \cdot \nabla \phi+
    \lambda(\epsilon \mu-1) \phi=0,
    \qquad
    \beta=\ln(\epsilon \mu).
  \end{equation*}
  Setting $\phi=\sqrt{\epsilon \mu}\psi$, this equation is
  transformed into
  \begin{equation}\label{eq:psi}
    \textstyle
    (\Delta+\lambda)\psi+c(x)\psi=0,
    \qquad
    c(x)=\frac 12 \Delta \beta-\frac 14|\nabla \beta|^{2}+
    \lambda(\epsilon \mu-1).
  \end{equation}
  Condition \eqref{eq:decayem}  for some $\delta'>\delta$
  implies that 
  $|x|^{2}\bra{x}^{\delta}c(x)\in \ell^{1}L^{\infty}$
  and $c\psi\in Y^*$.
   Lemma 3.3 in \cite{DAncona20} thus yields 
  $\psi=0$ and hence $\phi=0$.

 We next show some decay of $u$.
  Since $u$ is divergence free, 
  as in Proposition~\ref{pro:spectralass1} the equation
  $L(z)u=0$ is  reduced to \eqref{eq:resrewr2} with
  $z^{2}=\lambda$. Defining $(\E,\H)$ as in \eqref{eq:defH},
  with $\sqrt{\lambda}$ in place of $z$,
  we see that $(\E,\H)$ satisfy
  the Maxwell system
  \eqref{eq:maxwellhelm} with $z=\sqrt{\lambda}>0$.
   Since $\Im z=0$, equations \eqref{eq:partid} and  \eqref{eq:defH} imply
   \begin{align*}
    \textstyle
    \int_{|x|=R}
    [|\widehat{x}\times \E|^{2}+|\H|^{2}]d S
    &=
    \textstyle
    \int_{|x|= R}
    |\widehat{x}\times \E+\H|^{2}d S\\
    &=
   \textstyle
    \int_{|x|= R}
    |\mu \sqrt{\lambda}|^{-2}
    |\nabla \times \E- i \mu \sqrt{\lambda}\widehat{x}\times \E|^{2}
    d S.
  \end{align*}
  Multiplying both sides by the (radial) function
  $\bra{x}^{s-1}$ and integrating
  in the radial variable, we arrive at
  \begin{equation*}
    \|\bra{x}^{s-1}\widehat{x}\times \E\|_{L^{2}}
    +\|\bra{x}^{s-1}\H\|_{L^{2}}
    \le C(\mu)\lambda^{-1/2}
    \|\bra{x}^{s-1}
    (\nabla \times \E- i \mu \sqrt{\lambda}
    \widehat{x}\times \E)\|_{L^{2}}.
  \end{equation*}
  Now the radiation estimate \eqref{eq:R0rad} with  $s=\frac 12+$ yields
  \begin{equation}\label{eq:R0z}
    \|\bra{x}^{-\frac 12+}
    (\nabla-i \widehat{x}\sqrt{\lambda})R_0(\lambda)f\|_{L^{2}}         
    \le C    \|\bra{x}^{\frac 12+} f\|_{L^{2}}.
  \end{equation}
  By means of $\E=u/\epsilon$, we write
  \begin{equation}\label{eq:Etou}
    \textstyle
    \nabla \times \E- i \mu \sqrt{\lambda}\widehat{x}\times \E=
    (\nabla\frac1\epsilon)\times  u+
    \frac{i \sqrt{\lambda}}{\epsilon}(1-\mu)\widehat{x}\times u+
    \frac 1 \epsilon(\nabla \times  u-i \sqrt{\lambda}
    \widehat{x}\times u).
  \end{equation}
  We know that $u=R_0(\lambda)f$ for some $f\in Y^{*}$,      
  so that $u\in X$ and $\sqrt{\lambda}u\in Y$ by
  \eqref{eq:R0first}. Condition \eqref{eq:decayem} and \eqref{eq:XY-bounds} 
then imply that the first two terms on the RHS of
$\bra{x}^{-\frac 12+}$ times \eqref{eq:Etou}  are bounded 
by $\|\bra{x}^{\frac 12+}f\|_{L^{2}}$.  
Using also \eqref{eq:R0z},   we derive
  \begin{equation*}
    \|\bra{x}^{-\frac 12+}
    \widehat{x}\times \E\|_{L^{2}}
    +\|\bra{x}^{-\frac 12+}\H\|_{L^{2}}
    \le 
    C(\epsilon,\mu,\lambda)
    \|\bra{x}^{\frac 12+}f\|_{L^{2}}<\infty.
  \end{equation*}
  This proves that 
  $\bra{x}^{-\frac 12+}\H$ and hence 
  $\bra{x}^{-\frac 12+}\nabla \times\E$ are contained in
  $L^{2}$. On the other hand, $\E=\epsilon^{-1} R_0(\lambda)f$ satisfies   
  $\bra{x}^{-1/2-}\E\in L^{2}$ by \eqref{eq:XY-bounds}. 
  The condition $\nabla \cdot(\epsilon\E)=0$
  and the decay of $\nabla \epsilon$
  thus give $\bra{x}^{-\frac 12+}\nabla \cdot\E\in L^{2}$.
  It follows that $\bra{x}^{-\frac 12+}\nabla \E$ is an element of
  $L^{2}$,  which leads to
  $\bra{x}^{-\frac 12+}\nabla u\in L^{2}$ and the estimate
  \begin{equation*}
    \|\bra{x}^{-\frac 12+}\nabla u\|_{L^{2}}
    \le C(\epsilon,\mu,\lambda)
    \|\bra{x}^{\frac 12+}f\|_{L^{2}}<\infty.
  \end{equation*}
 
 Recalling the original equation satisfied by $u$, we have
  \begin{equation*}
    (\Delta+\lambda)u=-g    
  \end{equation*}
  with $g=\lambda(a-1)u+b(x,\partial)u$ and
  $a=\epsilon \mu$.  Since $u,\nabla u\in Y$, 
  the decay assumption \eqref{eq:decayem}
  and \eqref{eq:XY-bounds}  yield
  $\bra{x}^{\frac 12+}g\in L^{2}$. 
  By the radiation estimate \eqref{eq:R0rad}
  for $R_{0}(\lambda)$, we obtain that
  $\bra{x}^{-\frac 12+}(\nabla u-i \sqrt{\lambda}\widehat{x}u)\in L^{2}$ 
  and in conclusion $\bra{x}^{-\frac 12+}u\in L^{2}$.   Note that also $|x|^{-\frac 12+}u$
  belongs to $L^{2}$.

  To prove that $u=0$, we use a Carleman estimate
  from Proposition 5 of \cite{KochTataru06-a} 
  for the special case of the operator $\Delta+\lambda$
  and of a function with $|x|^{-1/2+}u\in L^{2}$.  
  There it is shown that 
  \begin{equation*}
    \textstyle
    \|w\rho u\|_{L^{2}}
    +
    \|\frac{|x|w \rho}{h'(\ln|x|)+|x|}\nabla
       u\|_{L^{2}}
    \lesssim
    \|w(x)\rho^{-1} (\Delta+\lambda)u\|_{L^{2}}
  \end{equation*}
  where $w(x)=e^{h(\ln|x|)}$, $\varepsilon,\tau_1>0$ are small but fixed, and
  \begin{equation*}
    \textstyle
    h'(t)=\tau_1+(\tau e^{t/2}-\tau_1)                 
    \frac{\tau^{2}}{\tau^{2}+\varepsilon e^{t}},
    \qquad
    \rho(|x|)=
    \Big(\frac{h'(\ln|x|)}{|x|^{2}}\Big(1+\frac{h'(\ln|x|)^{2}}{|x|^{2}}
       \Big)\Big)^{1/4}.
  \end{equation*}
  The estimate is uniform in   $\tau\ge \hat{\tau}$ for some $\hat\tau\ge 1$.
  We further set $\varphi(r)=h(\ln r)$ and note 
   \[ \rho(r)= r^{-\frac14}(\varphi'(r)+ \varphi'(r)^3)^{\frac14}.\]
 We can write
  \begin{equation*}
    \textstyle
    \|w\rho^{-1} (\Delta+\lambda) u\|_{L^{2}}
    \lesssim 
    \|w\rho^{-1} (\frac{1}{\epsilon \mu}
     \Delta+\lambda) u\|_{L^{2}}
    +
    \lambda
    \|w\rho^{-1} (\epsilon \mu-1)
     u\|_{L^{2}}
  \end{equation*}
  and also
  \begin{equation*}
    \textstyle
    -\frac{1}{\epsilon \mu}\Delta u=
    \nabla \times \frac 1\mu \nabla \times \frac1{\epsilon}u
    +L.O.T.
    =
    \lambda u+L.O.T.
  \end{equation*}
  Here the lower order terms are bounded by 
  $\bra{x}^{-2-\delta} |u| + \bra{x}^{-1-\delta}|\nabla u|$
  due to \eqref{eq:defbxD} and \eqref{eq:decayem}. We obtain
  \begin{equation*}
    \textstyle
    \|w\rho u\|_{L^{2}}
    +
    \|\frac{|x|w \rho}{h'(\ln|x|)+|x|} 
    \nabla u\|_{L^{2}}
    \lesssim
    \|w\rho^{-1}(L.O.T.)\|_{L^{2}}
    +
    \lambda
    \|w\rho^{-1} (\epsilon \mu-1)
    u\|_{L^{2}}.
  \end{equation*}
  
  To absorb the RHS by the left, we have to prove that the functions
  $m_1=\rho^{-2}\bra{x}^{-2-\delta}$ and
   $m_2=\rho^{-2}(1+\varphi')\bra{x}^{-1-\delta}$ are smaller 
   than a certain constant $\eta>0$  uniformly in $x$
  for a fixed large $\tau$. This will yield $u=0$ and thus the result. 
  Let $r=|x|$. We first observe that
\begin{align*}
 \varphi'(r)
  &= \frac{h'(\ln r)}{r}
  = \frac{\tau^3+\tau_1 \varepsilon r^{\frac12}}{\tau^2 r^{\frac12}+ \varepsilon r^{\frac32}},\\
m_1(x)&\le \bra{x}^{-\frac32-\delta} (\varphi'(r)+\varphi'(r)^3)^{-\frac12}
   \le \bra{x}^{-\frac32-\delta} \varphi'(r)^{-\frac12}
   \le \bra{x}^{-\frac12-\delta}\varphi'(r)^{-\frac12},\\
  m_2(x) &\le \bra{x}^{-\frac12-\delta} 
  \textstyle
     \frac{1+\varphi'(r)}{(\varphi'(r)+ \varphi'(r)^3)^{\frac12}} 
      = \bra{x}^{-\frac12-\delta}\varphi'(r)^{-\frac12}
     \le C \bra{x}^{-\frac12-\delta}
     \frac{\tau r^{\frac14}+ \varepsilon^{\frac12} r^{\frac34}}
        {\tau^{\frac32}+(\tau_1 \varepsilon)^{\frac12} r^{\frac14}}=:m(x).
\end{align*}
Let $r\ge r_0$ for some $r_0\ge 1$ to be fixed below. We compute
  \[ m(x)\le  \frac{r^{-\frac14-\delta}}{\tau^{\frac12}} 
         + \frac{\varepsilon^{\frac12} r^{\frac14-\delta}}{(\tau_1\varepsilon)^{\frac12}r^{\frac14}}
         \lesssim \tau^{-\frac12} + r_0^{-\delta}. \]
uniformly   for  $\tau\ge 1$ and $r\ge r_0$. 
We can fix $r_0\ge 1$ and $\tau_0\ge\hat\tau$ such that $m(x)\le \eta$ for all  $\tau\ge \tau_0$
and $|x|\ge r_0$. Let now $|x|=r\le r_0$. In similar way we  estimate
\[m(x)\le \frac{\tau r^{\frac14}+ \varepsilon^{\frac12} r^{\frac34}}
        {\tau^{\frac32}+(\tau_1 \varepsilon)^{\frac12} r^{\frac14}}
        \le \tau^{-\frac12} r_0^{\frac14}+ \tau^{-\frac32}\varepsilon^{\frac12} r_0^{\frac34}.\] 
Fixing a large $\tau\ge\tau_0$, we conclude that $m(x)\le \eta$  and hence
$m_1(x), m_2(x)\le \eta$ for all $x$.
\end{proof}

It is possible to exclude also a resonance at $z^2=0$, 
provided the first derivatives of the coefficients decay a bit faster. 
We now use that the space dimension is $n=3$
which did not play a role so far.

\begin{proposition}\label{pro:spectralass3}
Assume the real-valued coefficients 
$\epsilon,\mu>0$ satisfy \eqref{eq:choab} and
\begin{equation}\label{eq:decayem1}
  |\epsilon-1|+|\mu-1|+|D^{2}\mu|+|D^{2}\epsilon|
  \lesssim \bra{x}^{-2-\delta},\qquad
 %$ |D^{2}\epsilon|
  % \lesssim \bra{x}^{-\frac52-\delta},\qquad
   |\nabla \epsilon|+|\nabla \mu|
  \lesssim \bra{x}^{-\frac32-\delta}
\end{equation}
for some $\delta\in(0,\frac12)$. 
Let $L(0)u=0$ for some $u=\Delta^{-1} f$ and 
$f\in L^2_{1/2+}$.
Then $u=0$, so that spectral assumption  (S) is true in view of 
Propositions~\ref{pro:spectralass1} and \ref{pro:spectralass2}.
\end{proposition}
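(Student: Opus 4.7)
The plan is to mirror the three-part strategy of Propositions~\ref{pro:spectralass1}--\ref{pro:spectralass2}, adapted to the purely elliptic regime at $z=0$ where no radiation identity is available. First I would rewrite $L(0)u=0$ via \eqref{eq:choab}, as in Proposition~\ref{pro:spectralass1}, to get $\nabla(\nabla\cdot u)=\epsilon\mu\,\nabla\times(\mu^{-1}\nabla\times(u/\epsilon))$. Dividing by $\epsilon\mu$ and taking divergence, $\phi:=\nabla\cdot u$ satisfies the self-adjoint equation $\nabla\cdot((\epsilon\mu)^{-1}\nabla\phi)=0$. Since $f=-b(x,\partial)u\in L^2$ (using \eqref{eq:assbpacom} and the bounds $u\in X$, $\nabla u\in Y$ from Lemma~\ref{lem:resetsR0}), Calder\'on-Zygmund yields $D^2u\in L^2$, hence $\nabla\phi\in L^2$, while $\phi\in Y$. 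Testing against $\chi_R^2\bar\phi$ for a smooth cutoff supported in $|x|\le R$ with $|\nabla\chi_R|\lesssim 1/R$, integration by parts bounds the boundary term by $\tfrac{C}{R}\|\phi\|_{L^2(A_R)}\|\nabla\phi\|_{L^2(A_R)}$ with $A_R=\{R/2\le|x|\le R\}$, which is at most $CR^{-1/2}\|\phi\|_Y\,o(1)\to 0$ as $R\to\infty$. Hence $\nabla\phi=0$, and since nonzero constants are excluded from $Y$, $\phi=0$.

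Next I would bootstrap the pointwise decay of $u$. With $\nabla\cdot u=0$ the equation reduces to $\nabla\times(\mu^{-1}\nabla\times\E)=0$ and $\nabla\cdot(\epsilon\E)=0$ for $\E:=u/\epsilon$. The enhanced decay \eqref{eq:decayem1} sharpens \eqref{eq:assbpacom} to $|f|\lesssim\bra{x}^{-3/2-\delta}|\nabla u|+\bra{x}^{-2-\delta}|u|$, and a Cauchy-Schwarz analysis of the Newtonian potential $u=-\tfrac{1}{4\pi|\cdot|}\ast f$ gives a first pointwise bound $|u(x)|\lesssim\bra{x}^{-\sigma_0}$ for some $\sigma_0>0$. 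Reinserting this into the expression for $f$ improves $f$'s decay, and each iteration gains $\delta$ in the pointwise rate, stabilizing at $|u(x)|\lesssim\bra{x}^{-1}$ and $|\nabla u(x)|\lesssim\bra{x}^{-2}$; hence $|\E|\lesssim\bra{x}^{-1}$ and $|\nabla\times\E|\lesssim\bra{x}^{-2}$.

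For the conclusion I would set $\H:=\mu^{-1}\nabla\times\E$, so $\nabla\times\H=0$, $\nabla\cdot(\mu\H)=0$, $|\H|\lesssim\bra{x}^{-2}$. Since $\mathbb{R}^3$ is simply connected $\H=\nabla\psi$; the $|x|^{-2}$ decay integrated along rays makes $\psi$ bounded with $\psi\to 0$ at infinity. The uniformly elliptic equation $\nabla\cdot(\mu\nabla\psi)=0$ with such a $\psi$ forces $\psi\equiv 0$ by a Moser/De~Giorgi-Nash-Liouville argument, so $\H=0$ and $\nabla\times\E=0$. Writing $\E=\nabla\varphi$ reduces the remaining constraint to $\nabla\cdot(\epsilon\nabla\varphi)=0$; using the identity $\int_{|x|=R}\hat n\cdot(\epsilon\E)\,dS=0$ (divergence theorem) together with the decay of $\E$ to exclude a logarithmic asymptotic for $\varphi$, one chooses $\varphi$ bounded and applies the Liouville theorem again to conclude $\varphi\equiv$ const, hence $u=\epsilon\nabla\varphi=0$.

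The hard part will be this very last Liouville step for $\varphi$: the borderline decay $|\E|\lesssim\bra{x}^{-1}$ only just fails to make $\varphi$ automatically bounded, so the solenoidality of $\epsilon\E$ must be used carefully to rule out logarithmic growth of $\varphi$. If this direct route balks, an alternative is to exploit $\nabla\cdot u=0$ more finely to refine the radial asymptotics of $u$ in the Step~2 bootstrap, obtaining decay strictly better than $\bra{x}^{-1}$ and thereby placing $\mathbf{E}$ in a space where the energy identity closes unambiguously.
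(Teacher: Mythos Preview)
Your three-step architecture (show $\nabla\cdot u=0$; bootstrap decay; reduce to a Liouville problem for a scalar potential) coincides with the paper's. Your Step~1 is in fact cleaner than the paper's: the paper first extracts pointwise decay of $\phi$ via Newtonian-potential estimates before running the cutoff energy argument, whereas your direct use of $\nabla\phi\in L^2$ and $\phi\in Y$ suffices. Your detour in Step~3 through $\H=\mu^{-1}\nabla\times\E$ is correct but unnecessary---the paper obtains $\nabla\times(\epsilon^{-1}u)=0$ directly by the same cutoff energy estimate applied to the equation $\nabla\times(\mu^{-1}\nabla\times(\epsilon^{-1}u))=0$.

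The genuine gap is exactly where you flag it: the final Liouville step for $\varphi$. With only $|\E|\lesssim\bra{x}^{-1}$ the flux identity $\int_{|x|=R}\epsilon\,\partial_r\varphi\,dS=0$ rules out an \emph{isotropic} $c\log|x|$ tail but not an anisotropic one, and your alternative of pushing $|u|$ below $\bra{x}^{-1}$ is blocked by the generic $|x|^{-1}$ behaviour of the Newtonian potential. The missing observation is that the strengthened decay \eqref{eq:decayem1} does the work \emph{on $\varphi$ directly, not on $u$}: since $\nabla\cdot u=0$, one has
\[
\Delta\varphi=\nabla\cdot\E=\nabla\cdot\bigl(\tfrac{1}{\epsilon}u\bigr)=\nabla\tfrac{1}{\epsilon}\cdot u,
\]
and the right-hand side is $O(\bra{x}^{-3/2-\delta}\cdot\bra{x}^{-1+})=O(\bra{x}^{-5/2-})$, hence in $L^1$ with a margin. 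Writing $\varphi=\Delta^{-1}(\nabla\epsilon^{-1}\cdot u)$ then gives $|\varphi|\lesssim|x|^{-1/2-\delta/2}$ and $|\nabla\varphi|\lesssim|x|^{-3/2-\delta/2}$ explicitly, so the energy identity for $\nabla\cdot(\epsilon\nabla\varphi)=0$ closes with the boundary term decaying like $R^{-\delta}$. This is where the hypothesis $|\nabla\epsilon|\lesssim\bra{x}^{-3/2-\delta}$ (as opposed to merely $\bra{x}^{-1-\delta}$) is actually used; without it your borderline situation is real.
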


\begin{proof}
1) We have $\Delta u=f\in  L^2_{1/2+}\hookrightarrow Y^*$ 
and hence $D^2 u\in L^2$.
Moreover, Lemma~\ref{lem:resetsR0} yields $\nabla u\in Y$ 
and $u\in X$. 
As before, we first consider the function $\phi=\nabla\cdot u$
which now fulfills the equation
\begin{equation*}
  \textstyle
  \nabla \cdot(\frac{1}{\epsilon \mu}\nabla \phi)=0,
  \qquad\text{i.e.,}\qquad 
  \Delta \phi=\nabla \beta \cdot \nabla \phi,
  \qquad
  \beta=\ln(\epsilon \mu).
\end{equation*}
Starting from $\nabla \phi\in L^{2}$, we get 
$\Delta\phi\in L^{2}$ and then $\nabla\phi\in H^2_{loc}$, 
so that $\phi\in C^{1}$. By \eqref{eq:decayem1},
\begin{equation*}
  g=\nabla \beta \cdot \nabla \phi
  \quad\text{satisfies}\quad 
  g\in L^{2}_{\frac 32-\frac\delta2},\quad              
  \nabla g\in L^{2}_{\frac 32+\delta}.
\end{equation*}
Note that this implies $\bra{x}^{\frac 32+\delta}g\in L^{6}$ 
because of
\begin{equation*}
  \|\bra{x}^{\frac 32+\delta}g\|_{L^{6}}
  \lesssim
  \|\nabla(\bra{x}^{\frac 32+\delta}g)\|_{L^{2}}
  \lesssim
  \|\bra{x}^{\frac 12+\delta}g\|_{L^{2}}+
  \|\bra{x}^{\frac 32+\delta}\nabla g\|_{L^{2}}<\infty.
\end{equation*}
Since $\phi=\Delta^{-1}g$,
we can estimate
\begin{equation*}
  \textstyle
  |\phi(x)|
  \lesssim\int \frac{|g(y)|}{|x-y|}dy
  \lesssim
  \|\bra{x}^{\frac 32-\delta/2}g\|_{L^{2}}
  (\int \frac{dy}{\bra{y}^{3- \delta}|x-y|^{2}})^{1/2}
  \lesssim
  |x|^{-1+\delta/2}                  
\end{equation*}
using the standard inequality
\begin{equation*}
  \textstyle
  \int_{\mathbb{R}^n} 
    \frac{dy}{\bra{y}^{a}|x-y|^b}\lesssim |x|^{n-a-b} 
\end{equation*}
for $a,b\in (0,n)$ with $a+b>n$. 
In a similar way we obtain
\begin{equation*}
  \textstyle
  |\nabla\phi(x)|
  \lesssim\int \frac{|g(y)|}{|x-y|^{2}}dy
  \lesssim\|\bra{x}^{\frac 32+\delta}g\|_{L^{6}}
  (\int \frac{dy}{\bra{y}^{(3+2 \delta)3/5}|x-y|^{12/5}})^{5/6}
  \lesssim
  |x|^{-1-\delta}.
\end{equation*}
Together we have proved the decay
\begin{equation}\label{eq:est-psi}
  |\phi(x)|\lesssim \bra{x}^{-1+\delta/2},       
  \qquad
  |\nabla\phi(x)|\lesssim \bra{x}^{-1-\delta}.
\end{equation}
Let $\chi$ be a radial cut-off function equal to 1 on $B(0,1)$ 
and with support in $B(0,2)$. Set $\chi_R(x)=\chi(R^{-1}x)$ 
for $R\ge1$ and $\phi_R= \chi_R \phi$. We compute 
\begin{equation*}
  \nabla (\alpha \nabla \phi_{R})=
  2 \alpha \nabla \chi_{R}\cdot \nabla \phi+
  \nabla \alpha \cdot \nabla \chi_{R}\phi+
  \alpha \phi \Delta \chi_{R},
  \qquad
  \alpha=(\epsilon \mu)^{-1}.
\end{equation*}
Multiply by $\phi_R$ and integrate by parts. 
The above estimates then imply
\begin{equation*}
  \textstyle
  \int_{|x|\le R}\alpha|\nabla \phi_{R}|^{2}
  \lesssim
  \int_{R\le|x|\le2R}
  (
  R^{-1}\bra{x}^{-2-\delta/2}+             
  \bra{x}^{-1-\delta}R^{-1}\bra{x}^{-2+\delta}+
  \bra{x}^{-2+\delta}R^{-2}
  )
\end{equation*}
(we used again \eqref{eq:decayem1}) and we deduce
that for $R\to \infty$
\begin{equation}\label{eq:est-psiR}
  \textstyle
  \int_{|x|\le R}\alpha|\nabla \phi_{R}|^{2}
  \lesssim
  R^{-\delta/2}\to0. 
\end{equation}
We conclude that $\nabla \phi=0$, and by the decay
of $\phi$ we have $\nabla \cdot u=\phi=0$.

\smallskip

2) Using  $\nabla \cdot u=0$, 
as in Proposition~\ref{pro:spectralass1} 
the equation $L(0)u=0$ is  reduced to
\begin{equation}\label{eq:resrewr0}
  \nabla \times
  (\tfrac 1\mu \nabla \times \tfrac1{\epsilon}u) =0
  \qquad\text{or equivalently}\qquad 
  \Delta u=-b(x,\partial)u=:F.
\end{equation}
We can write
\begin{equation}\label{eq:expbu}
  |F|=|bu|\lesssim
  (|\nabla \mu|^{2}+|\nabla \epsilon|^{2}+|D^{2}\epsilon|)|u|+
  (|\nabla \epsilon|+|\nabla \mu|)|\nabla u|.
\end{equation}
We have $\nabla u\in Y \subset L^2_{-1/2-\sigma}$,
$u\in X\subset L^2_{-3/2-\sigma}$  (see \eqref{eq:XY-bounds}),
and by assumption $\Delta u\in L^{2}_{1/2+\sigma}$, for some $\sigma>0$.
Hypothesis \eqref{eq:decayem1} then yields  that
\[\bra{x}^{\frac12+\delta-\sigma} |\Delta u|
   \lesssim \bra{x}^{-\frac32-\sigma} |u| 
    + \bra{x}^{-\frac12-\sigma} |\nabla u| \ \in L^2.\]
    (Actually, we only use condition \eqref{eq:decay0} here.)
 We fix numbers $\frac\delta2>\gamma'>\gamma >\sigma$.  By H\"older's inequality,
 $\Delta u=F$  belongs  $L^{p}$ with
 \[\frac1{p}= \frac12 + \frac{\frac12+\gamma'-\sigma}3<1.\]
 Sobolev's embedding thus implies that 
\[ \nabla u \in L^q \quad \text{with \ }\frac1{q}= \frac13 + \frac{\gamma'-\sigma}3<\frac12,
\qquad u\in L^r \quad  \text{with \ } \frac1{r}=  \frac{\gamma'-\sigma}3.\]
We infer $\bra{x}^{\gamma-\sigma -\frac12}\nabla u\in L^2$ 
and $\bra{x}^{\gamma-\sigma -\frac32}u\in L^2$, so that
 $\bra{x}^{\frac12+\delta+\gamma-\sigma} \Delta u\in L^2$
by  \eqref{eq:decay0}.

We can repeat the argument replacing $\frac12-\sigma+\delta$ by 
$\frac12-\sigma+\gamma+\delta$, and derive that $\bra{x}^{2\gamma-\sigma -\frac12}\nabla u$ 
and $\bra{x}^{2\gamma-\sigma -\frac32}u$ belong to  $L^2$. This procedure can be started 
as long as $\frac12-\sigma+k\gamma+\gamma'<\frac32$. If $\frac12-\sigma+k\gamma+\gamma'\ge 1$
we obtain $\nabla u\in L^2$ where the improvement stops for $\nabla u$. For $u$ we can achieve
$\bra{x}^{-\frac12-\frac\delta2}u \in L^2$.

Assumption \eqref{eq:decayem1} now gives $F \in  L^2_{(3+\delta)/2}$ and 
\begin{equation*}
  \bra{x}^{\frac 32+\delta}|F|\lesssim
  \bra{x}^{-\frac12}|u| + |\nabla u|.
\end{equation*}

The second term at the right belongs to $L^{6}$
since $\|\nabla u\|_{L^{6}}\lesssim\|\Delta u\|_{L^{2}}$
and $\Delta u\in L^{2}_{1/2+}$. For the first term we have
\begin{equation*}
  \|\bra{x}^{-\frac12}u\|_{L^{6}}
  \lesssim
  \|\nabla(\bra{x}^{-\frac12}u)\|_{L^{2}}
  \lesssim
  \|\bra{x}^{-\frac32}u\|_{L^{2}}+
  \|\bra{x}^{-\frac12}\nabla u\|_{L^{2}}
  <\infty
\end{equation*}
by the above decay properties. We infer that
$\bra{x}^{\frac 32+\delta}F\in L^{6}$.
Thus we can repeat the argument in Step 1) and we obtain
\begin{equation}\label{eq:est-u}
  |u(x)|\lesssim \bra{x}^{-1+\delta/2},\qquad  
  |\nabla u(x)|\lesssim \bra{x}^{-1-\delta}.
\end{equation}
 
For $\chi_R$ as above, the map $u_R=\chi_R u$ satisfies
\[\nabla \times (\tfrac1\mu \nabla \times \tfrac1\epsilon u_R)  
 = \nabla \times (\tfrac1a \nabla\chi_R \times u) 
   + \tfrac1\mu \nabla\chi_R \times (\nabla \times \tfrac1\epsilon u)\]
because of \eqref{eq:resrewr0}. 
Similar to \eqref{eq:est-psiR}, we conclude that
\[ \int_{ |x|\le R} |\nabla\times 
  \tfrac1{\epsilon} u_R|^2  \tfrac1\mu dx
  \lesssim R^{-\delta/2}\]             
and hence $\nabla \times \tfrac1{\epsilon} u=0$. The Helmholtz decomposition 
thus yields $ \tfrac1{\epsilon} u=\nabla \varphi$ with the potential
\[ \varphi = \Delta^{ -1} \nabla  \cdot \tfrac1{\epsilon}u 
       =  \Delta^{ -1}(\nabla \tfrac1{\epsilon}\cdot u),\]
where we employed again $\nabla \cdot u =0$. 
Estimates \eqref{eq:decayem1} and \eqref{eq:est-u} imply
\begin{equation*}
  |\varphi(x)|\lesssim 
  \int \frac{dy}{\bra{y}^{\frac52+\frac{\delta}2}|x-y|}\lesssim |x|^{-\frac12 -\frac{\delta}2}, \qquad
  |\nabla \varphi(x)|\lesssim 
   \int \frac{dy}{\bra{y}^{\frac52+\frac\delta2}|x-y|^2}\lesssim |x|^{-\frac32-\frac\delta2}.
\end{equation*}
On the other hand, we have $0=\nabla \cdot u = \nabla\cdot (\epsilon\nabla \varphi)$
which leads to
\[ \int_{|x|\le R} \epsilon|\nabla\varphi|^2dx 
  =\Big|\int_{|x|=R} \epsilon \overline{\varphi}\widehat{x}\cdot\nabla\varphi dS\Big|
  \lesssim R^2 R^{-\frac12 -\frac{\delta}2}R^{-\frac32-\frac\delta2} \lesssim R^{-\delta}.\]
 As $R\to\infty$, we infer that $u=\epsilon \nabla \varphi=0$.
\end{proof}

\section{High frequencies}\label{sec:hi_freq}

In the high frequency regime $|z|\gg1$ we can prove more precise
estimates, with the correct dependence on $z$ of the constants.
This will require a splitting of space variables in two domains:
for large $x$ we can use a Morawetz type estimate since
lower order terms are small there, while 
for bounded $x$ a modified Carleman estimate is sufficient.
This kind of splitting has been used by several authors
(see e.g.~\cite{CardosoCuevasVodev13-a}).

\subsection{Morawetz estimate}\label{sub:mora_esti}

Assume $a(x)>0$ and let
\begin{equation}\label{eq:helmeq}
  f=\Delta v+z^{2}a(x)v,
  \qquad
  z^{2}=\lambda+i \eta.
\end{equation}
Here we may assume $\eta\ge0$ since the case $\eta<0$
is reduced to the first one by conjugating the equation.
Then for all real valued $\phi$ and $\psi$ we have the 
well known identities
\begin{align}\label{eq:id1}
  \begin{split}
    \Re \nabla \cdot\{Q+P\}&= -\textstyle \frac12 \Delta(\Delta
    \psi+\phi)|v|^{2} +
    2 \partial_{j}v \partial_{j}\partial_{k}\psi \partial_{k}\bar{v}
    -\lambda a(x) \phi|v|^{2}
    + \lambda \nabla \psi \cdot \nabla a|v|^{2}
    \\
    & \qquad +\phi|\nabla v|^{2} 
    +2 \eta a(x)\Im[v\nabla \psi \cdot\nabla
    \bar{v}] +\Re ([\Delta,\psi]+\phi)\overline{v}f],
  \end{split}\\
\label{eq:id2}
  \nabla \cdot P&=
  \phi|\nabla v|^{2}
  -z^{2} a(x)|v|^{2}\phi
  +f \overline{v}\phi
  -\textstyle \frac12 \Delta\phi|v|^{2}
  +i\Im (\overline{v}\nabla v \cdot \nabla \phi)
\end{align}
for the functions
\begin{align*}
  Q&=
  \nabla v \  
  [\Delta,\psi]\overline{v}
  -\textstyle\frac12 \nabla \Delta \psi|v|^{2}
  -\nabla\psi |\nabla v|^{2}
  +\nabla\psi a(x)\lambda|v|^{2},\\
  \textstyle 
  P&=
  \nabla v\phi \overline{v}
  -\frac 12\nabla\phi|v|^{2}.
\end{align*}
The quick way to check these identities is by expanding the
derivatives of $P$ and $Q$ at the left hand side.
In these computations we assume that the functions 
are sufficiently regular, and below we also need 
some integrability; these technical assumptions can be 
removed by approximation arguments.
We rewrite \eqref{eq:id1} in the form
\begin{equation}\label{eq:fundid}
  \textstyle
  \Re \nabla\{Q+P\}+I_{\eta}+I_{f}
  =
  I_{\nabla v}+I_{v}
\end{equation}
where
\begin{align*}
  I_{\nabla v}&=
  2 \partial _{j}v
  \, (\partial_{j}\partial_{k}\psi) \,
    \overline{\partial _{k}v}
  + \phi|\nabla v|^{2},
  \qquad
  \textstyle
  I_{v}=
  -\frac12 \Delta(\Delta \psi+\phi)|v|^{2}
  -\lambda a(x) \phi|v|^{2}
  + \lambda \nabla \psi \cdot \nabla a|v|^{2},\\
  I_{f}&=
  -\Re(f\,[\Delta,\psi]\overline{v}
  +f\,\overline{v}\,\phi),
  \qquad
  I_{\eta}=
  -2 \eta a(x)\Im(\overline{v}\,     
  \nabla \psi \cdot \nabla v).
\end{align*}

1) We first deduce from \eqref{eq:id2} some easy estimates, where
we now work in three space dimensions for simplicity.
We take the imaginary part in \eqref{eq:id2} 
and integrate on $\mathbb{R}^{3}$. It follows
\begin{equation}\label{eq:localimv2}
  \textstyle
  \eta \int a(x)|v|^{2}\phi=
  \Im\int f \overline{v}\phi
  +\Im \int\overline{v}\nabla v \cdot \nabla \phi.
\end{equation}
Choosing $\phi=1$, we infer
\begin{equation}\label{eq:firstst}
  \textstyle 
  \eta\|a^{1/2}v\|^{2}=\Im\int f \overline{v}.
\end{equation}
Similarly, the real part of \eqref{eq:id2} yields 
\begin{equation}\label{eq:localrev2}
  \textstyle
  \int \phi|\nabla v|^{2}=
  \lambda\int a|v|^{2}\phi-
  \Re\int f \overline{v}\phi+
  \frac 12\int \Delta \phi|v|^{2}
\end{equation}
and with $\phi=1$
\begin{equation}\label{eq:secest}
  \textstyle
  \|\nabla v\|^{2}=
  \lambda\|a^{1/2}v\|^{2}-\Re\int f \overline{v}.
\end{equation}
In order to estimate the term $I_{\eta}$ 
in \eqref{eq:fundid}, we use
\eqref{eq:firstst} and \eqref{eq:secest} to deduce
\begin{equation*}
  \textstyle
  \int I_{\eta}
  \le
  2 |\eta|\|a^{1/2}\nabla \psi\|_{L^{\infty}}
  \|a^{1/2}v\|_{L^{2}}\|\nabla v\|_{L^{2}}
  \le
  C|\eta|^{1/2}(\int|f\overline{v}|)^{1/2}
  (|\lambda|\|a^{1/2}v\|^{2}_{L^{2}}+\int|f\overline{v}|)
  ^{1/2}
\end{equation*}
with $C=2\|a^{1/2}\nabla \psi\|_{L^{\infty}}$.
Equation \eqref{eq:firstst} then leads to 
\begin{equation*}
  \textstyle
  \int I_\eta \le
  C(\int|f\overline{v}|)^{1/2}
  (|\lambda|\int|f\overline{v}|
  +|\eta|\int|f\overline{v}|)^{1/2},
\end{equation*}
and we arrive at the estimate
\begin{equation}\label{eq:estIep}
  \textstyle
  \int I_{\eta}
  \le
  2\|a^{1/2}\nabla \psi\|_{L^{\infty}}(|\lambda|+|\eta|)^{1/2}
  \|f\overline{v}\|_{L^{1}}.
\end{equation}

2) In \eqref{eq:localimv2} we choose $\phi$ as 
\begin{equation}\label{eq:choicephi}
  \textstyle
  \phi(x)=1 \ \text{ \ if}\ |x|\le R,\quad
  \phi(x)=2-\frac{|x|}{R} \ \text{ \ if}\ R\le|x|\le2R,\quad
  \phi(x)=0 \ \text{ \ if}\ |x|\ge 2R.
\end{equation}
We compute
\begin{equation}\label{eq:interm}
\begin{split}
  \textstyle
  |\eta|
  \int_{|x|\le R}
  a|v|^{2}                
  &\le 
  \textstyle
  \int_{|x|\le2R}
  |f \overline{v}|
  +\frac{1}{R}
  \int_{R\le|x|\le 2R}
  |v||\nabla v|
    \\
  &\lesssim 
  \textstyle
  \int_{|x|\le 2R}|f \overline{v}|+
  R\|v\|_{\dot X}\|\nabla v\|_{\dot Y}.
\end{split}
\end{equation}
Observe that we have used the homogeneous norms \eqref{eq:defdotX}.
Dividing by $R$ and taking the supremum over $R>0$,
we obtain the estimate
\begin{equation}\label{eq:third}
  |\eta|\|a^{1/2}v\|_{\dot Y}^2    
  \lesssim
  \||x|^{-1}f \overline{v}\|_{L^{1}}^2
  +\|v\|_{\dot X}
  +\|\nabla v\|_{\dot Y}^2.
\end{equation}
Next, take $\phi=\frac{1}{|x|\vee R}$ and note that
\begin{equation*}
  \textstyle
  \Delta \phi=-\frac{1}{R^{2}}\delta_{|x|=R}.
\end{equation*}
For this $\phi$, equation \eqref{eq:localrev2} implies 
\begin{equation}\label{eq:intermb}
  \textstyle
  \int \frac{|\nabla v|^{2}-\lambda a |v|^{2}}{|x|\vee R}dx
  + \frac 1{2 R^{2}}\int_{|x|=R}|v|^{2}dS
  \le
  \int \frac{|f \overline{v}|}{|x|\vee R}
  \le\||x|^{-1}f \overline{v}\|_{L^{1}}.
\end{equation}

To proceed, we have to distinguish three cases for $\lambda$.
First, let $\lambda\le 0$. 
We deduce
\begin{equation*}
  \textstyle
  \frac1R\int_{|x|\le R} (|\nabla v|^{2}+a|\lambda||v|^{2})dx   
  + \frac 1{2 R^{2}}\int_{|x|=R}|v|^{2}dS
  \le\||x|^{-1}f \overline{v}\|_{L^{1}},
\end{equation*}
and thus, taking the supremum over $R>0$,
\begin{equation*}%\label{eq:interm1}
  \|\nabla v\|_{\dot Y}^{2}+|\lambda|\|a^{1/2}v\|_{\dot Y}^{2}   
  +\|v\|_{\dot X}^{2}\lesssim
  \||x|^{-1}f \overline{v}\|_{L^{1}}.
\end{equation*}
Combined with \eqref{eq:third}, this relation shows
\begin{equation}\label{eq:lambaneg0}
  \|\nabla v\|_{\dot Y}^{2}
  +\|za^{1/2}v\|_{\dot Y}^{2}  
  +\|v\|_{\dot X}^{2}\lesssim
  \||x|^{-1}f \overline{v}\|_{L^{1}}
  \qquad\text{for}\quad \lambda\le0.
\end{equation}

If $\lambda\ge0$,  with a similar computation,
from \eqref{eq:intermb} we infer the inequality
\begin{equation*}
  \|\nabla v\|_{\dot Y}^{2}
  +\|v\|_{\dot X}^{2}
  \le C_{0}(
  \||x|^{-1}f \overline{v}\|_{L^{1}}
  +|\lambda|\|a^{1/2}v\|_{\dot Y}^{2})   
  \qquad\text{for}\quad \lambda\ge0
\end{equation*}
for a suitable constant $C_{0}>0$.
Let now $\lambda\le (2C_{0})^{-1}|\eta|$. 
As $\|\cdot\|_{\dot Y}\ge\|\cdot\|_{Y}$,
the above estimate, \eqref{eq:third} and 
\eqref{eq:lambaneg0} imply
\begin{equation}\label{eq:lambaneg}
  \|\nabla v\|_{Y}^{2}+\|za^{1/2}v\|_{Y}^{2}  
  +\|v\|_{\dot X}^{2}\lesssim
  \||x|^{-1} f\overline{v}\|_{L^{1}}
  \qquad\text{for}\quad \lambda\le C_{1}|\eta|
\end{equation}
where $C_{1}=(2C_{0})^{-1}$.

Recall now that $f=(\Delta+z^{2}a)v$. In the desired result
we also have a first-order operator  $b=b(x,\partial)$
satisfying \eqref{eq:assb} below, with a sufficiently small
constant $\sigma$. To include this term,
we write $f=(\Delta+z^{2}a+b)v-bv$. We can control
the new term with $bv$ via
\begin{equation*}
  \||x|^{-1} \overline{v}b(x,\partial)v\|_{L^{1}}
  \lesssim
  \sigma\||x|^{-1/2}\bra{x}^{-1-\delta/2}v\|^{2}_{L^{2}}
  +
  \sigma\|\bra{x}^{-(1+\delta)/2}\nabla v\|_{L^{2}}
  \|\bra{x}^{-(1+\delta)/2}|x|^{-1}v\|_{L^{2}}
\end{equation*}
so that (a variant of) \eqref{eq:XY-bounds} shows
\begin{equation*}
  \||x|^{-1} \overline{v}b(x,\partial)v\|_{L^{1}}
  \lesssim
  \sigma\|v\|_{\dot X}^{2}+\sigma\|\nabla v\|_{Y}^{2}.
\end{equation*}
These terms can be absorbed at the left if $\sigma>0$
is small enough. Inserting  $f=(\Delta+z^{2}a+b)v-bv$ in
\eqref{eq:lambaneg},  we conclude
\begin{equation}\label{eq:lambdaep}
  \|\nabla v\|_{Y}^{2}+\|za^{1/2}v\|_{Y}^{2}   
  +\|v\|_{\dot X}^{2}\lesssim
  \||x|^{-1} \overline{v}(\Delta+z^{2}a+b)v\|_{L^{1}}
  \quad\text{for}\quad \lambda\le C_{1}|\eta|.
\end{equation}
Observe that
\[\||x|^{-1}g \overline{v}\|_{L^{1}} 
  \le \|g\|_{Y^{*}}\||x|^{-1} v\|_{\dot{Y}}
  \lesssim \|g\|_{Y^{*}} \|v\|_{\dot X}.\]
 again by a variant of \eqref{eq:XY-bounds}.
Absorbing a $\|v\|_{\dot X}^2$ term, we conclude
\begin{equation}\label{eq:final-mor1}
  \|\nabla v\|_{Y}+\|za^{1/2}v\|_{Y}  
  +\|v\|_{\dot X}\lesssim
  \|(\Delta+z^{2}a+b)v\|_{Y^{*}}
  \qquad\text{for}\quad \lambda\le C_{1}|\eta|.
\end{equation}

3) It remains to consider the case $0\le C_{1}|\eta|\le \lambda$,
for which we need  \eqref{eq:fundid}.
For arbitrary $R>0$, we now employ the functions
\begin{equation}\label{eq:ourpsi}
  \psi=
  \frac{R^{2}+|x|^{2}}{2R}\one{|x|\le R}+|x|\one{|x|>R},
  \qquad
  \phi=-\frac{1}{R}\one{|x|\le R}.
\end{equation}
One calculates
\begin{equation}\label{eq:Apsifi}\begin{split}
  \psi'&=\frac{|x|}{|x|\vee R},
  \qquad
  \psi''=
  \frac1R\one{|x|\le R},\\
  \Delta \psi+\phi&=\frac{2}{|x|\vee R},\qquad
  \textstyle
  \Delta(\Delta \psi+\phi) =
    -\frac{2}{R^{2}}
    \delta_{|x|=R}.
\end{split}\end{equation}
We assume
\begin{equation*}%\label{eq:assax}
  \textstyle
  0<\alpha\le a(x)\le M,\qquad
  \|\bra{x}a'_{-}a^{-1}\|_{\ell^{1}L^{\infty}}\le \frac 14.   
\end{equation*}
Using these relations and  the inequality
\begin{equation*}
  \textstyle
  \int \nabla \psi \cdot \nabla a|v|^{2}
  \ge-\|a'_{-}|v|^{2}\|_{L^{1}}
  \ge-2\|a^{1/2}v\|_{Y}^{2}
  \|\bra{x}a'_{-}a^{-1}\|_{\ell^{1}L^{\infty}},
\end{equation*}
cf.\ \eqref{eq:dyadic}, we derive
\begin{equation}\label{eq:estIv}
  \textstyle
  \sup_{R>0}\int I_{v}
  \ge
  \|v\|_{\dot X}^{2}
  + \frac\lambda2 \|a^{1/2}v\|_{Y}^{2}.
\end{equation}
(Recall \eqref{eq:defdotX}.)
Since $\psi$ is radial, we can write
\begin{equation*}
  \textstyle
  2 \partial _{j}v\, (\partial_{j}\partial_{k}\psi) \,
    \overline{\partial _{k}v}
  =
  2\psi''
  \left|\widehat{x}\cdot \nabla v\right| ^{2}
  +
  2\frac{\psi'}{|x|}
  \left[|\nabla v|^{2}
    -\left|\widehat{x}\cdot \nabla v\right| ^{2}\right]
  \ge
  \frac{2}{R}\one{|x|\le R}|\nabla v|^{2},
\end{equation*}
so that
\begin{equation}\label{eq:estInav}
  \textstyle
  \sup_{R>0}
  \int I_{\nabla v}
  \ge
  \|\nabla v\|_{Y}^{2}.
\end{equation}
Integrating \eqref{eq:fundid}, the lower bounds
\eqref{eq:estIv} and \eqref{eq:estInav} show
\begin{equation}\label{eq:intermest}
  \textstyle
  \|v\|_{\dot X}^{2}+
  \lambda \|a^{1/2}v\|_{Y}^{2}
  +\|\nabla v\|_{Y}^{2}
  \lesssim
  \sup_{R>0}\int I_{f}
  +
  \sup_{R>0}\int I_{\eta}.
\end{equation}
In view of  $|\Delta \psi+\phi|\le2/|x|$ and
$|\nabla \psi|\le1$, we have
\begin{equation*}
  \textstyle
  \int I_{f} \le
  2\||x|^{-1}f\overline{v}\|_{L^{1}}
  +
  2\|f\nabla \overline{v}\|_{L^{1}}.
\end{equation*}
Because of  $0\le C_{1}|\eta|\le \lambda$,
estimate \eqref{eq:estIep} for the above $\psi$
yields
\begin{equation}\label{eq:estIep2}
  \textstyle
  \int I_{\eta}
  \lesssim
  (M \lambda)^{1/2}
  \|f\overline{v}\|_{L^{1}}.
\end{equation}
for every $R>0$. We thus arrive at
\begin{equation*}
  \textstyle
  \|v\|_{\dot X}^{2}+
  \lambda \|a^{1/2}v\|_{Y}^{2}
  +\|\nabla v\|_{Y}^{2}
  \lesssim
  \||x|^{-1}f\overline{v}\|_{L^{1}}
  +
  \|f\nabla \overline{v}\|_{L^{1}}
  +
  (M \lambda)^{1/2}
  \|f\overline{v}\|_{L^{1}} \qquad \text{for}\quad \lambda\ge C_{1}|\eta|.
\end{equation*}
We now use the inequalities
\begin{equation*}
  \||x|^{-1}f \overline{v}\|_{L^{1}}
  \le\|f\|_{Y^{*}}\|v\|_{\dot X},
  \qquad
  \|f \overline{v}\|_{L^{1}}\le\|f\|_{Y^{*}}\|v\|_{Y}
\end{equation*}
as well as $|\eta|\le \frac{1}{C_{1}}\lambda$ and 
$a\ge  \alpha$, to obtain
\begin{equation}\label{eq:lambdalarge}
  \textstyle
  \|v\|_{\dot X}^{2}+ \|zv\|_{Y}^{2}
  +\|\nabla v\|_{Y}^{2}
  \le
  C(M,\alpha)\|f\|_{Y^{*}}^{2}.
\end{equation}

Recall that $f=(\Delta+z^{2}a(x))v$. As in \eqref{eq:lambdaep}, 
in \eqref{eq:lambdalarge} one can now add and subtract the term
$bv$ on the right hand side and absorb error terms for 
a small $\sigma>0$ (w.r.t.\ $\alpha$ and $M$).
We conclude that
\begin{equation}\label{eq:finalsmoo}
  \textstyle
  \|v\|_{\dot X}^{2}+\|zv\|_{Y}^{2}
  +\|\nabla v\|_{Y}^{2}
  \le
  c(\alpha,M)\|(\Delta+z^{2}a+b) v\|_{Y^{*}}^{2}.
\end{equation}

Putting the pieces together, \eqref{eq:final-mor1} 
and \eqref{eq:finalsmoo} we have proved the following
uniform resolvent estimate under a smallness condition 
on the coefficients of $b(x,\partial)$.

\begin{proposition}[]\label{pro:smoothingest}
  Let $z\in \mathbb{C}$ with $\Im z\ge0$.
  Assume that for some $M,\alpha>0$
  \begin{equation}\label{eq:assa1}
    \textstyle
    \alpha\le a(x)\le M,\qquad
    \|\bra{x}a^{-1}a'_{-}\|_{\ell^{1}L^{\infty}}\le \frac 14,
  \end{equation}
  while the first-order operator
  $b(x,\partial)$ satisfies for some $\sigma,\delta>0$
  \begin{equation}\label{eq:assb}
    \textstyle
    |b(x,\partial)v|\le
    \sigma
    (\bra{x}^{-2-\delta}|v|+\bra{x}^{-1-\delta}|\nabla v|).
  \end{equation}
  Let $\sigma$ be sufficiently small with respect to
  $\alpha$ and $M$. We then have
  \begin{equation}\label{eq:gensmoo}
    \textstyle
    \|v\|_{\dot X}
    + \|zv\|_{Y}
    +\|\nabla v\|_{Y}
    \le
    c(\alpha,M,\sigma,\delta)
    \|(\Delta+z^{2}a+b) v\|_{Y^{*}}.
  \end{equation}
\end{proposition}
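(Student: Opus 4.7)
The plan is to apply the Morawetz-type multiplier identities \eqref{eq:id1} and \eqref{eq:id2} with carefully chosen real functions $\phi$ and $\psi$, then split the analysis according to the relative size of $\lambda=\Re z^2$ and $\eta=\Im z^2$. First I write $f=(\Delta+z^2a)v$ (treating $b v$ as an error) and collect four elementary consequences of \eqref{eq:id2} with $\phi=1$ and with the cut-off \eqref{eq:choicephi}, namely the energy and current identities \eqref{eq:firstst}, \eqref{eq:secest} and the localized bounds \eqref{eq:third}, \eqref{eq:intermb}. These are then enough to control the ``imaginary'' cross term $I_\eta$ in \eqref{eq:fundid} by $(|\lambda|+|\eta|)^{1/2}\|f\bar v\|_{L^1}$, see \eqref{eq:estIep}.

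Second, I split into two regimes. When $\lambda\le C_1|\eta|$ (including $\lambda\le 0$) I combine the pair \eqref{eq:third} and \eqref{eq:intermb} with $\phi=\frac{1}{|x|\vee R}$, supremumed in $R>0$, to produce the $\dot X$/$Y$ bound
\[
\|v\|_{\dot X}^2+\|za^{1/2}v\|_Y^2+\|\nabla v\|_Y^2\lesssim \||x|^{-1}f\bar v\|_{L^1}.
\]
When $\lambda\ge C_1|\eta|$ I use the genuine Morawetz multipliers \eqref{eq:ourpsi}, whose derivatives \eqref{eq:Apsifi} make $2\partial_jv\,\partial_j\partial_k\psi\,\overline{\partial_kv}+\phi|\nabla v|^2$ nonnegative and yield the key lower bound \eqref{eq:estInav}; here the non-trapping hypothesis \eqref{eq:assa1} is exactly what allows the ``bad'' term $\lambda\int\nabla\psi\cdot\nabla a\,|v|^2$ in $I_v$ to be absorbed by $\tfrac\lambda2\|a^{1/2}v\|_Y^2$, giving \eqref{eq:estIv}. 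Integrating \eqref{eq:fundid} and using \eqref{eq:estIep2} for $I_\eta$ then gives \eqref{eq:lambdalarge}.

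Third, in both regimes I bring back $b(x,\partial)v$ by writing $f=(\Delta+z^2a+b)v-bv$. The pointwise bound \eqref{eq:assb} together with the inclusions $\|\bra x^{-1/2-\delta'}\cdot\|_{L^2}\lesssim\|\cdot\|_Y$ and $\|\bra x^{-3/2-\delta'}\cdot\|_{L^2}\lesssim\|\cdot\|_{\dot X}$ from \eqref{eq:XY-bounds} show that $\||x|^{-1}\bar v\,bv\|_{L^1}\lesssim \sigma(\|v\|_{\dot X}^2+\|\nabla v\|_Y^2)$ and similarly for $\|bv\|_{Y^*}\|v\|_{\dot X}$; choosing $\sigma$ small with respect to $\alpha,M$ (and the constants in \eqref{eq:XY-bounds}) allows one to absorb these contributions into the left-hand side. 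Finally, replacing $\||x|^{-1}f\bar v\|_{L^1}$ by $\|f\|_{Y^*}\|v\|_{\dot X}$ and $\|f\bar v\|_{L^1}$ by $\|f\|_{Y^*}\|v\|_Y$ and absorbing yields \eqref{eq:gensmoo}.

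The main technical obstacle is that the multiplier argument in the large-$\lambda$ regime produces the sign-indefinite term $\lambda\int \nabla\psi\cdot\nabla a\,|v|^2$, which could destroy the estimate for trapping coefficients; it is handled precisely by the one-sided radial decay condition on $(a)'_-$ in \eqref{eq:assa1} combined with the dyadic characterization \eqref{eq:dyadic} of $\|\cdot\|_Y$. A secondary delicate point is that $I_\eta$ must be controlled without the factor $\sqrt{|\eta|}$ blowing up against $\sqrt{\lambda}$, which forces the case split at $\lambda\sim|\eta|$ and the use of \eqref{eq:firstst} to trade powers of $\eta$ for $\|f\bar v\|_{L^1}$.
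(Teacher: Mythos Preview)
Your proof is correct and follows essentially the same approach as the paper: the same two-regime split at $\lambda\sim|\eta|$, the same multiplier choices \eqref{eq:choicephi}, $\phi=\frac{1}{|x|\vee R}$, and \eqref{eq:ourpsi}, the same use of the non-trapping condition to absorb the sign-indefinite term $\lambda\int\nabla\psi\cdot\nabla a\,|v|^2$, and the same final absorption of $bv$ for small $\sigma$. You have also correctly identified the two main technical issues (the non-trapping condition for $I_v$ and the bookkeeping of $I_\eta$ via \eqref{eq:firstst}).
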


We now localize estimate \eqref{eq:gensmoo} 
to a region $|x|\ge S$, where $S>1$ is fixed but arbitrary.
We shall assume that condition \eqref{eq:assb}
is satisfied only in this region:
\begin{equation}\label{eq:assbS}
  \textstyle
  |b(x,\partial)v|\le
  \sigma
  (\bra{x}^{-2-\delta}|v|+\bra{x}^{-1-\delta}|\nabla v|)
  \quad\text{for}\quad |x|>S.
\end{equation}
Let $\chi_{0}$ be a real valued radial cutoff equal 
to $0$ for $|x|\le 1$
and equal to 1 for $|x|\ge 2$, with a non negative radial
derivative $\chi_{0}'\ge0$. Set $\chi(x)=\chi_{0}(x/S)$
with the parameter $S>1$. Note that
\begin{equation*}
  |\nabla \chi|\lesssim S^{-1}\one{|x|\sim S},
  \qquad
  |\Delta \chi|\lesssim S^{-2}\one{|x|\sim S}
\end{equation*}
where $|x|\sim S$ is a shortcut for $S\le|x|\le 2S$.
We consider $w=\chi v$, $L=\Delta+az^{2}+b$,
$z^{2}=\lambda+i \eta$, and
\begin{equation*}
  f=Lv,
  \qquad
  g=Lw=\chi f+[L,\chi]v,
  \qquad
  [L,\chi]v=
  2 \nabla \chi \cdot \nabla v+ \Delta \chi v+[b,\chi]v.
\end{equation*}
Assumption \eqref{eq:assbS} yields
\begin{equation*}
  |[b(x,\partial),\chi]v|\le
  |v|\sigma \bra{x}^{-1-\delta}|\nabla \chi|
  \lesssim
  \sigma S^{-2-\delta}|v|\one{|x|\sim S},
\end{equation*}
where we can assume w.l.o.g. $\sigma\le1$. 
We thus obtain
\begin{equation}\label{eq:L-chi}
  |[L,\chi]v|\le cS^{-1}(|v|+|\nabla v|)\one{|x|\sim S}
\end{equation}
for some constant $c=c(\sigma,M)$. We now prove a version of 
\eqref{eq:gensmoo} for $v_S=\chi v$.

\smallskip

1) It is sufficient to consider $\eta\ge0$ as
the case $\eta<0$ follows by conjugation.
First, let $-\infty<\lambda\le C_{1}\eta<+\infty$. 
We can here apply estimate \eqref{eq:lambdaep} with $w$ in
place of $v$, i.e.,
\begin{equation*}
  \textstyle
  \|\nabla w\|_{Y}^{2}+
  \|zw\|_{Y}^{2}+
  \|w\|_{\dot X}^{2}
  \le c\||x|^{-1}\overline{w}Lw\|_{L^{1}}.
\end{equation*}
(Since $w=0$ for $|x|\le S$, it is sufficient
to assume the localized condition \eqref{eq:assbS}
on the lower order terms.)
Writing $Lw=\chi Lv+[L,\chi]v$ and using the estimate
\eqref{eq:L-chi}, we compute
\begin{equation*}
  \||x|^{-1}\overline{w}\chi Lw\|_{L^{1}}\le
  \||x|^{-1}\overline{w}\chi Lv\|_{L^{1}}
  +cS^{-1}\|(|v|+|\nabla v|)v\|_{L^{1}(|x|\sim S)}
\end{equation*}
for some $c=c(\sigma,M)$. The space $\ell^{\infty}L^{\infty}L^{2}$
was introduced after \eqref{eq:dyadic}. Analogously, we define
$\ell^{1}L^{1}L^{2}$ and control its norm by
\[\|u\|_{\ell^{1}L^{1}L^{2}}
:= \sum_{j\ge0} \int_{2^j}^{2^{j+1}}\Big(\int_{|x|=r} |u|^2 dS\Big)^{\frac12}dr
\le  \sum_{j\ge0} 2^{\frac{j}2} \Big(\int_{A_j} |u|^2 dx\Big)^{\frac12}
\lesssim \|u\|_{Y^*},
\]
employing \eqref{eq:dyadic} in the last step.
By means of a variant of \eqref{eq:dyadic}, we thus obtain
\begin{equation*}
  \||x|^{-1}\overline{w}\chi Lv\|_{L^{1}}\le
  \||x|^{-1}w\|_{\ell^{\infty}L^{\infty}L^{2}}
  \|\chi Lv\|_{\ell^{1}L^{1}L^{2}}\lesssim
  \|w\|_{\dot X}\|\chi Lv\|_{Y^{*}},
\end{equation*}
We conclude
\begin{equation}\label{eq:firstfinw}
  \textstyle
  \|\nabla w\|_{Y}^{2}+
  \|zw\|_{Y}^{2}+
  \|w\|_{\dot X}^{2}
  \le c\|\chi Lv\|^{2}_{Y^{*}}
  +cS^{-1}\|(|v|+|\nabla v|)v\|_{L^{1}(|x|\sim S)}
\end{equation}
for $-\infty<\lambda\le C_{1}\eta<+\infty$.

\smallskip

2) Let now  $\lambda\ge C_{1}\eta\ge0$.
For this case we resort to \eqref{eq:intermest}
with $w=\chi v$ in place of $v$ and $h=(\Delta+z^{2}a(x))w$
in place of $f$ which gives
\begin{equation}\label{eq:intermestw}
  \textstyle
  \|w\|_{\dot X}^{2}+
  \lambda \|w\|_{Y}^{2}
  +\|\nabla w\|_{Y}^{2}
  \lesssim
  \sup_{R>0}\int \widetilde{I}_{h}
  +
  \sup_{R>0}\int \widetilde{I}_{\eta}
\end{equation}
where $\psi$ and $\phi$ are given by \eqref{eq:ourpsi} as well as
\begin{equation*}
  \widetilde{I}_{h}=
  -\Re \big((2 \nabla \psi \cdot \nabla\overline{w}
  + \Delta \psi \overline{w}
  +\phi \overline{w})h\big),      
  \qquad
  \widetilde{I}_{\eta}=
  -2 \eta a(x)\Im(\overline{w}\,\nabla \psi \cdot\nabla w). 
\end{equation*}
By \eqref{eq:Apsifi} we have $|\nabla \psi|\le1$ 
and hence
\begin{equation}\label{eq:Iepint}
  \textstyle
  \int \widetilde{I}_{\eta}\le 2M \eta\|\overline{w}\nabla w\|_{L^{1}}
  \lesssim
  2M \eta\|\overline{w}\nabla v\|_{L^{1}}
  +2M \eta S^{-1}\||v|^{2}\|_{L^{1}(|x|\sim S)}
\end{equation}
 for all $R>0$. Next, identities \eqref{eq:firstst} and
\eqref{eq:secest} imply the estimates 
\begin{equation*}
  \textstyle
  \eta\|v\|^{2}\le 
    \alpha^{-1}\|f \overline{v}\|_{L^{1}},
  \qquad
  \|\nabla v\|^{2}\le
  M\lambda\|v\|^{2}+
  \|f \overline{v}\|_{L^{1}}
\end{equation*}
where $\|\cdot\|=\|\cdot\|_{L^{2}}$.
Taking into account $S>1$ and $\lambda\ge C_{1}\eta$, we infer
\begin{align*}
  \textstyle
  \eta S^{-1}
  \|\one{|x|\sim S}v\|^{2}&\le
  \alpha^{-1}\|f \overline{v}\|_{L^{1}},\\
  \eta\|\overline{v}\nabla v\|_{L^{1}}
  &\le
  \eta \lambda^{1/2}\|v\|^{2}+
  \eta \lambda^{-1/2}\|\nabla v\|^{2}\le
  \alpha^{-1}\lambda^{1/2}\|f \overline{v}\|_{L^{1}}+
  M \eta \lambda^{1/2}\|v\|^{2}+
  \eta \lambda^{-1/2}\|f \overline{v}\|_{L^{1}}\\
&\le(\alpha^{-1}+M\alpha^{-1}+C_{1}^{-1})
  \lambda^{1/2}\|f \overline{v}\|_{L^{1}}.
\end{align*}
So \eqref{eq:Iepint} leads to
\begin{equation}\label{eq:Iepquasi}
  \textstyle
  \int \widetilde{I}_{\eta}\le
  C(\alpha,M)(1+\lambda)^{1/2}\|f \overline{v}\|_{L^{1}}\le
  C(\alpha,M,\rho)\|Lv\|_{Y^{*}}^{2}
  +\rho^{2}(1+\lambda)\|v\|_{Y}^{2}
\end{equation}
for all $\rho>0$.
On the other hand, $\widetilde{I}_{h}$ can be written as
\begin{align*}
  \widetilde{I}_{h}&=
  -\Re\big((2 \chi \psi'\overline{v}_{r}               
  +2 \chi'\psi'\overline{v}
  +(\Delta \psi+\phi) \overline{w})
  \cdot
  (2 \chi' v_{r}+\Delta \chi v+\chi Lv-\chi b(x,\partial)v)\big)\\
  &=N+I+II+III+IV
\end{align*}
for the summands
\begin{align*}
  N&=-4 \chi \chi' \psi'|v_{r}|^{2},\\
  I&=
  -\Re(2 \chi'\psi'\overline{v}
  +(\Delta \psi+\phi) \overline{w})
  \cdot
  2 \chi' v_{r},\\
  II&=            
  -\Re\big((2 \nabla \psi \cdot \nabla \overline{w}+  
    (\Delta \psi+\phi) \overline{w})
    \Delta\chi v\big),\\
  III&=-\Re\big((2 \nabla \psi \cdot \nabla \overline{w}+
  (\Delta \psi+\phi) \overline{w})
  \chi Lv\big),\\
  IV&=
  \Re\big((2 \nabla \psi \cdot \nabla \overline{w}+
    (\Delta \psi+\phi) \overline{w})
    (b(x,\partial) w-[b,\chi]v)\big).
\end{align*}
The term $N$ is negative and can be dropped.
For the remaining terms, we recall from  \eqref{eq:Apsifi}  
that $|\nabla \psi|\le1$ and
$|\Delta \psi+\phi|\le 2/\bra{x}$ on the support of $\chi$,
independently of $R>0$. Moreover, the definition of $\chi$ yields
\[\chi'\le cS^{-1}\one{|x|\sim S} \le c\bra{x}^{-1} 
\qquad\text{and}\qquad
|\Delta \chi|\le cS^{-2}\one{|x|\sim S}\le c\bra{x}^{-2}\]
for $S>1$. We thus obtain
\begin{align*}
  I+II&\lesssim S^{-2}|v|(|\nabla v|+|v|)\one{|x|\sim S},\\
  III&\lesssim
  |\chi Lv|(|\nabla w|+\bra{x}^{-1}|w|),\\
  \textstyle
  \|III\|_{L^{1}}&\le
  c'\|\chi Lv\|_{Y^{*}}^{2}+
  \frac{1}{10}\|\bra{x}^{-1}w\|_{Y}^{2}
  +\frac{1}{10}\|\nabla w\|_{Y}^{2}.
\end{align*}
Note that $\|\bra{x}^{-1}w\|_{Y}\le\|w\|_{\dot X}$, cf.\ \eqref{eq:XY-bounds}.
For $IV$ we use \eqref{eq:assbS} and get
\begin{align*}
  IV&\le
  c(|\nabla w|+\bra{x}^{-1}|w|)
  \sigma(\bra{x}^{-1-\delta}|\nabla w|+\bra{x}^{-2-\delta}|w|
  +S^{-2-\delta}|v|\one{|x|\sim S})\\
  \textstyle
  &\le
  c \sigma\bra{x}^{-1-\delta}(|\nabla w|+\bra{x}^{-1}|w|)^{2}
  +c \sigma S^{-2}|v|^{2}\one{|x|\sim S},
\end{align*}
 estimating $[b,\chi]$ as in \eqref{eq:L-chi}.
Invoking \eqref{eq:XY-bounds}, it follows
\begin{equation*}
  \textstyle
  \|IV\|_{L^{1}}\lesssim
   \sigma
  \|w\|_{\dot X}^{2}
  +
   \sigma
  \|\nabla w\|_{Y}^{2}
  +
   \sigma
  S^{-2}\|v^{2}\|_{L^{1}(|x|\sim S)}.
\end{equation*}
Thus if $\sigma$ is small enough we derive
\begin{equation*}
  \textstyle
  \int \widetilde{I}_{h}\le
  c\|\chi Lv\|_{Y^{*}}^{2}
  +
  cS^{-2}\|(|\nabla v|+|v|)v\|_{L^{1}(|x|\sim S)}
  +
  \frac{1}{5}\|w\|_{\dot X}^{2}
  +\frac{1}{5}\|\nabla w\|_{Y}^{2}.
\end{equation*}
Plugging this estimate and \eqref{eq:Iepquasi}
in \eqref{eq:intermestw}
and absorbing some terms at the LHS, we arrive at
\begin{equation}\label{eq:laststep}
  \textstyle
  \|w\|_{\dot X}^{2}+
  \lambda \|w\|_{Y}^{2}
  +\|\nabla w\|_{Y}^{2}
  \le
  C\|Lv\|_{Y^{*}}^{2}
  +CS^{-1}\|(|v|+|\nabla v|)v\|_{L^{1}(|x|\sim S)}
  +\rho^{2}(1+\lambda)\|v\|_{Y}^{2}.
\end{equation}
By the condition $0\le\eta\le C_{1}\lambda$
we can replace $\lambda$ by $|\lambda+i \eta|=|z|^{2}$ 
on the LHS of the inequality.
Combining \eqref{eq:firstfinw} and \eqref{eq:laststep},
we have proved the following uniform resolvent estimate for 
functions localized outside a ball, provided that the lower order
coefficients are small in that region.

\begin{proposition}[]\label{pro:smooestcutoff}
  Let $M,\alpha,\sigma,\delta>0$ and $S>1$.
  Assume that $a(x)$ satisfies \eqref{eq:assa1},
  while the first-order operator
  $b(x,\partial)$ satisfies
  \begin{equation}\label{eq:assb2}
    \textstyle
    |b(x,\partial)v|\le
    \sigma
    (\bra{x}^{-2-\delta}|v|+\bra{x}^{-1-\delta}|\nabla v|)
    \quad\text{for all $|x|\ge S$.}\quad 
  \end{equation}
  Let $\sigma>0$ be sufficiently small with respect to
  $\alpha$ and $M$. Then for all $z\in \mathbb{C}$ the 
  function $v_{S}=v\one{|x|\ge 2S}$ satisfies
  \begin{equation}\label{eq:smoocutoff2}
    \textstyle
    \|v_{S}\|_{X}
    + \|z v_{S}\|_{Y}
    +\|\nabla v_{S}\|_{Y}
    \le
    C
    \|Lv\|_{Y^{*}}
    + \frac{C}{S}\|(|v|+|\nabla v|)v\|_{L^{1}(|x|\sim S)}
    ^{1/2}
    +
    \rho(1+\lambda)^{1/2}\|v\|_{Y}
  \end{equation}
  for all $\rho>0$,
  where $\lambda=\Re z^2$, $L(z)=\Delta+z^{2}a(x)+b(x,\partial)$
  and $C=C(\alpha,M,\sigma,\delta,\rho)$.
\end{proposition}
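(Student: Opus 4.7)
The strategy is to reduce to the uniform estimate of Proposition \ref{pro:smoothingest} via a smooth localization. Fix a radial cutoff $\chi=\chi_0(\cdot/S)$ with $\chi=0$ on $\{|x|\le S\}$ and $\chi=1$ on $\{|x|\ge 2S\}$, and set $w=\chi v$. Since $|v_S|\le|w|$ pointwise, it suffices to prove the analogue of \eqref{eq:smoocutoff2} with $w$ in place of $v_S$ on the left. The commutator identity $Lw=\chi Lv+[L,\chi]v$ together with the pointwise bound \eqref{eq:L-chi} localizes all error terms to the annulus $\{|x|\sim S\}$; paired in the $Y^*/\dot X$ duality this will produce the boundary contribution $S^{-1}\|(|v|+|\nabla v|)\,v\|_{L^1(|x|\sim S)}$ on the right-hand side.

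I would then split by the sign and size of $\lambda=\Re z^2$ relative to $\eta=\Im z^2$. In the regime $\lambda\le C_1\eta$ (which includes $\lambda<0$), apply the intermediate estimate \eqref{eq:lambdaep} from the proof of Proposition \ref{pro:smoothingest} with $w$ in place of $v$. Because $w$ is supported in $\{|x|\ge S\}$, only the local smallness \eqref{eq:assb2} of $b(x,\partial)$ is needed for the absorption step. Expanding $Lw$ via the commutator identity and estimating $\||x|^{-1}\overline w\,\chi Lv\|_{L^1}\le \|w\|_{\dot X}\|\chi Lv\|_{Y^*}$ then yields the bound \eqref{eq:firstfinw} directly.

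In the remaining regime $\lambda\ge C_1\eta\ge 0$ I would use the Morawetz identity \eqref{eq:fundid} with the multipliers $\psi,\phi$ of \eqref{eq:ourpsi}, applied to $w$ and $h=(\Delta+z^2 a)w$. The computations in \eqref{eq:Apsifi} ensure $\int I_{\nabla w}\gtrsim \|\nabla w\|_Y^2$ and $\int I_w\gtrsim \|w\|_{\dot X}^2+\tfrac\lambda 2\|a^{1/2}w\|_Y^2$ uniformly in $R>0$, given the sign condition on $(\epsilon\mu)'_-$. The source term $\int\widetilde I_h$ is split via $h=\chi Lv+[L,\chi]v-\chi b v$: the first piece contributes $\|\chi Lv\|_{Y^*}^2$, the commutator the annular error, and the $b$-piece is absorbed thanks to the smallness in \eqref{eq:assb2} on the support of $w$. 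The main obstacle is the $\eta$-term $\int\widetilde I_\eta\lesssim \eta\|\overline w\,\nabla w\|_{L^1}+\eta S^{-1}\|v\|_{L^2(|x|\sim S)}^2$, which cannot be absorbed directly since $\eta$ may be comparable to $\lambda$; the usual Cauchy--Schwarz bootstrap fails because $\|\nabla w\|_{L^2}^2$ is not controlled by the weighted norms on the left. The remedy is to use the baseline $L^2$ identities \eqref{eq:firstst}--\eqref{eq:secest} to trade $\eta\|v\|_{L^2}^2$ and $\|\nabla v\|_{L^2}^2$ for $\|Lv\cdot\overline v\|_{L^1}$ and $\lambda\|v\|_{L^2}^2$, yielding a bound of the form $C\lambda^{1/2}\|Lv\cdot\overline v\|_{L^1}+\text{(annular term)}$; a Young inequality with a small parameter $\rho>0$ then closes the estimate at the cost of the absorbable residual $\rho^2(1+\lambda)\|v\|_Y^2$, which is precisely the last term in \eqref{eq:smoocutoff2}. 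Combining the two regimes gives the proposition.
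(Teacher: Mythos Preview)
Your proposal is correct and follows essentially the same route as the paper: localize via $w=\chi v$, split according to $\lambda\le C_1\eta$ (handled by \eqref{eq:lambdaep} applied to $w$) versus $\lambda\ge C_1\eta\ge0$ (handled by the Morawetz identity \eqref{eq:fundid} with the multipliers \eqref{eq:ourpsi} applied to $w$), and control the troublesome term $\widetilde I_\eta$ by reverting to the global $L^2$ identities \eqref{eq:firstst}--\eqref{eq:secest} for $v$, which produces the residual $\rho^2(1+\lambda)\|v\|_Y^2$. One minor algebraic slip: your decomposition of $h=(\Delta+z^2a)w$ should read $h=\chi Lv+[\Delta,\chi]v-\chi bv$ (or equivalently $\chi Lv+[L,\chi]v-bw$), not $\chi Lv+[L,\chi]v-\chi bv$; the paper's finer expansion of $\widetilde I_h$ also isolates a favorable negative cross term $-4\chi\chi'\psi'|v_r|^2$ that can simply be dropped.
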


\subsection{Carleman estimate}\label{sub:carl_esti}

We shall combine estimate \eqref{eq:smoocutoff2} with
a Carleman estimate in a compact subset of $\mathbb{R}^{3}$,
in order to handle coefficients which may be large on a bounded 
subset of  $\mathbb{R}^{3}$.
Our goal is an estimate for (large) frequencies $z^{2}=\lambda+i \eta$ 
belonging to a suitable parabolic region, which is needed for our 
later investigations. In the following computations we consider
functions $u\in H^2$ which decay fast enough, actually the result 
will be applied to functions with compact support.

First, let $\Re z^{2}=\lambda<0$. Integration by parts yields
\begin{equation*}
  \textstyle
  \int|\nabla v|^{2}-\Re z^{2}\int a(x)|v|^{2}=
  -\Re\int((\Delta v+z^{2}av)\overline{v}),
  \qquad
  \Im z^{2}\int a(x)|v|^{2}=
  \Im \int((\Delta v+z^{2}av)\overline{v}).
\end{equation*}
These identities lead to
\begin{align*}
  \|\nabla v\|_{L^2}^{2}+|\lambda|\|a^{1/2}v\|_{L^2}^{2}&\le
  \|\Delta v+z^{2}av\|_{L^2}\|v\|_{L^2}\le
  \tfrac 1{|\lambda|}\|\Delta v+z^{2}av\|_{L^2}^{2}   
  +
  \tfrac {|\lambda|}4 \|a^{1/2}v\|_{L^2}^{2}\\    
  |\eta|\|a^{1/2}v\|^{2}_{L^{2}}&\le               
  \tfrac 1{|\lambda|}\|\Delta v+z^{2}av\|_{L^2}^{2}
  +
  \tfrac {|\lambda|} 4\|a^{1/2}v\|_{L^2}^{2}.
\end{align*}
Using $\alpha\le a(x)$, we obtain the elliptic estimate
\begin{equation*}
  \textstyle
  \|\nabla v\|_{L^{2}}^{2}+ \|zv\|_{L^{2}}^{2}\le
  \|\nabla v\|_{L^{2}}^{2}+ \tfrac1{\alpha }\|za^{1/2}v\|_{L^{2}}^{2}\le
  \frac {C(\alpha)}{|\lambda|}\|(\Delta+z^{2} a)v\|_{L^{2}}^{2}.
\end{equation*}
For any first-order operator $b(x,\partial)$ with bounded coefficients,
the above inequality implies 
\begin{align}
  \|\nabla v\|_{L^{2}}^{2}+
  \|zv\|_{L^{2}}^{2} &\le
  C(\alpha,\lambda_{0})
  \|(\Delta+z^{2} a+b(x,\partial))v\|_{L^{2}}^{2}, \notag\\
  \label{eq:ellest}
  \|v\|_X^2+ \|\nabla v\|_Y^{2}+
  \|zv\|_Y^{2} &\le
  C(\alpha,\lambda_{0})
  \|(\Delta+z^{2} a+b(x,\partial))v\|_{Y^*}^{2}
\end{align}
for all $\Re z^{2}\le -\lambda_{0}(a,b)$,
where $\lambda_{0}(a,b)>0$ depends only on 
$\alpha=\inf a(x)$
and the supremum of the coefficients of $b(x,\partial)$.
In the second line we employ (3.18) from  \cite{CacciafestaDAnconaLuca16}
and \eqref{eq:dyadic}.

We thus focus on the case $\lambda>0$, starting with the main part $\Delta+az^2$.
We use the notations
\begin{equation*}
  \textstyle
  r=|x|,\qquad
  \widehat{x}=\frac{x}{|x|},\qquad
  \partial_{r}=\widehat{x}\cdot \nabla,\qquad
  \Omega=r \nabla -x \partial_{r},\qquad
  \widetilde{\Omega}=\Omega-2\widehat{x}.
\end{equation*}
As above, we denote the radial derivative of a radially
symmetric function with an apex, i.e., $\phi'(r)=\partial_{r}\phi$.
The vector fields $\Omega$ and $\widetilde{\Omega}$
satisfy the relations
\begin{equation*}
  \textstyle
  \widehat{x}\cdot \Omega=0,
  \qquad
  \int_{\mathbb{S}^2}\widetilde{\Omega}f dS=0
\end{equation*}
and we have
\begin{equation*}
  \textstyle
  \Omega^{2}=\Delta_{\mathbb{S}^2},\qquad
  \Delta=\partial_{r}^{2}+\frac{2}{r}\partial_{r}+
    \frac{1}{r^{2}}\Omega^{2},\qquad
  |\nabla v|^{2}=|\partial_{r}v|^{2}+\frac{1}{r^{2}}|\Omega v|^{2}.
\end{equation*}

Fix two radially symmetric, real valued functions
$\phi$ and $\gamma$. We introduce the transformed operator
\begin{equation*}
  Q=
  r e^{\phi}(\Delta+z^{2} a(x))e^{-\phi}r^{-1},
  \qquad
  z^{2}=\lambda+i \eta,\quad
  \lambda,\eta\in \mathbb{R},
\end{equation*}
or more explicitly
\begin{equation*}
  \textstyle
  Q=
  \partial_{r}^{2}+\frac{1}{r^{2}}\Omega^{2}
  +z^{2} a(x)+\phi'^{2}-\phi''-2 \phi' \partial_{r}.
\end{equation*}
It is straighforward  to check
\begin{equation}\label{eq:idbase}
  \begin{split}
  \textstyle
  \partial_{r}\{\gamma A_{0}\}
  +\widetilde{\Omega} \cdot \{\gamma Z_{0}\}
  &=
  2\gamma\cdot\Re[Qv \cdot \overline{v_{r}}]
  +2\gamma \eta a(x)\Im(v \cdot \overline{v_{r}})+
  \\
  &\quad \textstyle
  +(\gamma'+4 \phi'\gamma)|v_{r}|^{2}
  -(\frac{\gamma}{r^{2}})'|\Omega v|^{2}
  +[(\lambda a(x)+\phi'^{2}-\phi'') \gamma]'
    |v|^{2},
  \end{split}
\end{equation}
where $\lambda=\Re z^{2}$, $\eta=\Im z^{2}$ and
\begin{equation*}
  \textstyle
  A_{0}=
    |\partial_{r}v|^{2}
    -\frac{1}{r^{2}}|\Omega v|^{2}
    +(\lambda a(x)+\phi'^{2}-\phi'')|v|^{2},
    \qquad\
  Z_{0}=2\Re(r^{-2}\Omega v\cdot\overline{v_{r}}).
\end{equation*}

\begin{lemma}[]\label{lem:carlemanest}
  Assume $a(x)$ satisfies 
  \begin{equation}\label{eq:assax2}
    0<a(x)\le M,\qquad
    (\nu+r) a'_{-}\le  2a-\nu,
   \end{equation} 
   for some $M>0$ and $\nu\in(0,1]$. Let  $\lambda=\Re z^{2}$, 
   $\eta=\Im z^{2}$, $\nu \lambda\ge 2 \eta^{2}$, and $\tau\ge M^2+4$.  
   Then we have the estimate
  \begin{equation}\label{eq:firstcarl}
    \textstyle
    \|e^{\phi}\bra{x}^{-1/2}\nabla u\|^{2}_{L^{2}}+
    (\Re z^{2}+\tau^{2})\|e^{\phi}u\|^{2}_{L^{2}}
    \le
    10 \nu^{-4} \tau^{-1}         
    \|e^{\phi}(\nu+r)(\Delta+za)u\|^{2}
  \end{equation}
  where $\phi(r)=\tau(r^{2}+r)$.
\end{lemma}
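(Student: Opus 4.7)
The plan is to apply the pointwise identity \eqref{eq:idbase} to $v=re^{\phi}u$ with a carefully chosen radial weight $\gamma$, integrate the resulting equality over $(0,\infty)\times S^2$ against $dr\,dS$, and extract positivity. Since the hypothesis $\nu\lambda\ge 2\eta^2$ already forces $\lambda=\Re z^2\ge 0$, no case split on the sign of $\lambda$ is needed. For $u\in H^2$ with compact support, the boundary contributions from $\partial_r\{\gamma A_0\}$ vanish (at $r=0$ because $v=re^{\phi}u=O(r)$ keeps $|v_r|$ and $|v|^2/r^2$ bounded, and at $r=\infty$ by compact support), while $\tilde\Omega\cdot\{\gamma Z_0\}$ integrates to zero over $S^2$ by $\int_{S^2}\tilde\Omega f\,dS=0$. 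Setting $g(r):=(\lambda a+\phi'^2-\phi'')\gamma$ and noting $Qv=re^{\phi}(\Delta+z^2 a)u$, the integrated identity becomes
\[
\int_0^\infty\!\!\int_{S^2}\!\bigl[(\gamma'+4\phi'\gamma)|v_r|^2-(\gamma/r^2)'|\Omega v|^2+g'|v|^2\bigr]\,dS\,dr=-\int_0^\infty\!\!\int_{S^2}\!\bigl[2\gamma\,\Re(Qv\overline{v_r})+2\gamma\eta a\,\Im(v\overline{v_r})\bigr]\,dS\,dr.
\]

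The key choice is $\gamma(r)=(\nu+r)^2$, matched to the $(\nu+r)$-weight appearing inside the norm on the right-hand side of the target estimate. Using $\phi'=\tau(2r+1)$, $\phi''=2\tau$, $\phi'''=0$, a direct computation gives
\[
\gamma'+4\phi'\gamma\ \ge\ 4\tau(\nu+r)^2,\qquad -\bigl(\gamma/r^2\bigr)'=\frac{2\nu(\nu+r)}{r^3}\ge 0.
\]
Expanding $g'$ and using the hypothesis $(\nu+r)a'_-\le 2a-\nu$ yields $a'(\nu+r)+2a\ge\nu$, so together with $\tau\ge M^2+4\ge 4$ (which gives $2\tau^2-4\tau\ge\tau^2$) one obtains the crucial lower bound
\[
g'(r)\ \ge\ \lambda\nu(\nu+r)\ +\ \tau^2(\nu+r).
\]

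On the right-hand side, Cauchy--Schwarz on the $Qv$-term produces $\tfrac{2\gamma^2}{\gamma'+4\phi'\gamma}|Qv|^2\le\tfrac{(\nu+r)^2}{2\tau}|Qv|^2$; since $|Qv|^2=r^2e^{2\phi}|(\Delta+z^2 a)u|^2$ and $dx=r^2\,dr\,dS$, integration yields exactly $\tfrac{1}{2\tau}\|e^{\phi}(\nu+r)(\Delta+z^2 a)u\|_{L^2}^2$. The indefinite imaginary term is split by AM--GM into a small multiple of $(\gamma'+4\phi'\gamma)|v_r|^2$ plus a $|v|^2$-coefficient of order $M^2\eta^2(\nu+r)^2/\tau$; invoking $\nu\lambda\ge 2\eta^2$ this becomes $\lesssim M^2\nu\lambda(\nu+r)^2/\tau$, which is then absorbed into the $\nu\lambda(\nu+r)$ and $\tau^2(\nu+r)$ pieces of $g'|v|^2$ using $\tau\ge M^2$. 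To recover the weighted gradient on the left of the final estimate, compute $e^{\phi}\partial_r u=r^{-1}v_r-r^{-2}(1+r\phi')v$ and $e^{\phi}\Omega u=r^{-1}\Omega v$; the resulting pointwise bound
\[
r^2 e^{2\phi}|\nabla u|^2\ \lesssim\ |v_r|^2+r^{-2}(1+r^2\phi'^2)|v|^2+r^{-2}|\Omega v|^2,
\]
weighted by $\bra{r}^{-1}$ and integrated in $dr\,dS$, is controlled term by term against $(\gamma'+4\phi'\gamma)|v_r|^2$, $-(\gamma/r^2)'|\Omega v|^2$, and $g'|v|^2$, at the cost of additional factors $\tau^{-1}$ and $\nu^{-1}$, which combine to yield the $\nu^{-4}$ in the final constant.

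The main obstacle is the indefinite-sign imaginary term $2\gamma\eta a\,\Im(v\overline{v_r})$: its absorption requires using \emph{simultaneously} the two quantitative hypotheses $\nu\lambda\ge 2\eta^2$ (to replace $\eta^2$ by a multiple of $\lambda$ that pairs with the $\lambda\nu(\nu+r)$ part of $g'$) and $\tau\ge M^2+4$ (to offset the $M=\sup a$ appearing after Cauchy--Schwarz via the $\tau^{-1}$ factor). Balancing these constraints against the positivity requirements for both the $(\Re z^2+\tau^2)\|e^{\phi}u\|^2$ and the weighted-gradient parts of the left-hand side, while tracking precisely the accumulated powers of $\nu$ so as to reach the advertised constant $10\nu^{-4}\tau^{-1}$, is the delicate bookkeeping at the heart of the proof.
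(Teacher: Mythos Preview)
Your strategy and your choices $\gamma=(\nu+r)^2$, $\phi=\tau(r^2+r)$ match the paper, and your treatment of the $Qv$-term, the $\eta$-term, and the lower bound for $g'$ are all correct. The gap is in the final step where you recover $\|e^{\phi}\bra{x}^{-1/2}\nabla u\|_{L^2}^2$.

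Your pointwise expansion
\[
r^{2}e^{2\phi}|\nabla u|^{2}\ \lesssim\ |v_r|^{2}+r^{-2}(1+r^{2}\phi'^{2})|v|^{2}+r^{-2}|\Omega v|^{2}
\]
contains the piece $r^{-2}|v|^{2}=e^{2\phi}|u|^{2}$. After multiplying by $\bra{r}^{-1}$ and integrating in $dr\,dS$, this yields $\|\bra{x}^{-1/2}|x|^{-1}e^{\phi}u\|_{L^{2}(\mathbb{R}^3)}^{2}$. You claim this is controlled by $g'|v|^{2}$, but near $r=0$ one has $\bra{r}^{-1}r^{-2}\sim r^{-2}$ while $g'\ge\tau^{2}(\nu+r)$ stays bounded, so no pointwise domination $\bra{r}^{-1}r^{-2}\lesssim C(\nu,\tau)\,g'$ is possible. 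The singular Hardy-type term $\||x|^{-1}e^{\phi}u\|_{L^{2}}^{2}$ simply is not bounded by any of the quantities $\|e^{\phi}u\|_{L^{2}}^{2}$, $\|(\nu+r)e^{\phi}u\|_{L^{2}}^{2}$, or $\|r^{-3/2}\Omega v\|_{\Pi}^{2}$ that your argument produces, and trying to control it via Hardy's inequality reintroduces $\|e^{\phi}\nabla u\|_{L^{2}}$ circularly.

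The paper closes this by \emph{not} expanding $|v_r-r^{-1}(1+r\phi')v|^{2}$ via the triangle inequality. Instead it writes $v=rw$ with $w=e^{\phi}u$ and uses the exact $3$D identity
\[
\|v_r\|_{\Pi}^{2}=\int_{\mathbb{R}^{3}}\Bigl|w_r+\tfrac{w}{r}\Bigr|^{2}dx
=\int\Bigl(|w_r|^{2}+\tfrac{|w|^{2}}{r^{2}}-\tfrac{|w|^{2}}{r^{2}}\Bigr)dx
=\|w_r\|_{L^{2}(\mathbb{R}^{3})}^{2},
\]
where the cross term $2\Re\!\int r^{-1}\overline{w}w_r\,dx=-\int r^{-2}|w|^{2}\,dx$ cancels the singular piece exactly (this is where $\nabla\!\cdot(\widehat{x}/r)=r^{-2}$ in dimension $3$ enters). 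From $\|v_r\|_{\Pi}=\|e^{\phi}(u_r+\phi' u)\|_{L^{2}}$ one then gets $\|e^{\phi}u_r\|_{L^{2}}^{2}\le 2\|v_r\|_{\Pi}^{2}+8\nu^{-2}\tau^{2}\|(\nu+r)e^{\phi}u\|_{L^{2}}^{2}$, and together with the angular piece $\|e^{\phi}r^{-3/2}\Omega u\|_{L^{2}}^{2}=\|r^{-3/2}\Omega v\|_{\Pi}^{2}$ this yields the weighted gradient bound with the stated constant $10\nu^{-4}\tau^{-1}$. Replacing your triangle-inequality step by this identity fixes the proof.
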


\begin{proof}%[Proof of ...]
  Identity \eqref{eq:idbase} implies
  \begin{equation*}
  \begin{split}
    \partial_{r}&\{\gamma A_{0}\}
    +\widetilde{\Omega} \cdot \{\gamma Z_{0}\}
    + \tau^{-1}\gamma|Qv|^{2}
    \\
    &
    \ge
    \textstyle
    (\gamma'+(4\phi'-\tau)\gamma-M^2\gamma^{\frac 32})|v_{r}|^{2}   
    -(\frac{\gamma}{r^{2}})'|\Omega v|^{2}
    +[(\lambda a(x)+\phi'^{2}-\phi'') \gamma]'|v|^{2}
    -\eta^{2}\gamma^{\frac 12}|v|^{2} .
  \end{split}
  \end{equation*}
  We make the choices
  \begin{equation*}
    \gamma(r)=(\nu+r)^{2},\qquad
    \phi(r)=\tau(r^{2}+r)
  \end{equation*}
  with the parameters $\tau\ge M^2+4$ and $\nu\in(0,1]$. We obtain  
  \begin{equation*}
  \begin{split}
   \ell:= \partial_{r}\{\gamma A_{0}\}
    +\widetilde{\Omega} \cdot \{\gamma Z_{0}\}
    + \tau^{-1}\gamma|Qv|^{2}
    &\ge
    \textstyle
    2 \tau(r+1)\gamma|v_{r}|^{2}
    +\frac{2 \nu(\nu+r)}{r^{3}}|\Omega v|^{2}
    \\
    &\quad
    +(\lambda(a \gamma)'+\tau^{2}(\nu+r)^{3}
    - \eta^{2}\gamma^{\frac 12})|v|^{2}.
    \end{split}
  \end{equation*}
  Condition \eqref{eq:assax2} yields
$(a \gamma)'\ge \nu \gamma^{1/2}$, and 
 $\nu\lambda-\eta^{2}\ge \frac 12 \nu \lambda$
follows from the assumption on $z$.
We can thus continue the previous inequality as
  \begin{equation*}
    \ell\ge
    \textstyle
    2 \tau(r+1)\gamma|v_{r}|^{2}
    +\frac{2 \nu(\nu+r)}{r^{3}}|\Omega v|^{2}
    +(\frac 12 \nu \lambda(\nu+r)+\tau^{2}(\nu+r)^{3})|v|^{2}.
  \end{equation*}
  Now we integrate over the cylinder
  $\Pi=[0,+\infty)\times \mathbb{S}^2$
  and use the notation
  \begin{equation*}
    \textstyle
    \|v\|_{\Pi}^{2}:=
    \int_{0}^{+\infty}\int_{\mathbb{S}^2}
      |v|^{2}d S dr.
  \end{equation*}
  So the above lower bound leads to
  \begin{equation*}
    \textstyle
    \tau
    \|v_{r}\|_{\Pi}^{2}
    +\|\frac{1}{r^{3/2}}\Omega v\|_{\Pi}^{2}
    +\lambda\|v\|_{\Pi}^{2}+ 2\tau^{2}\|(\nu+r)v\|_{\Pi}^{2} 
    \le
    2\tau^{-1}\nu^{-2}\|(\nu+r)Qv\|_{\Pi}^{2}.   
  \end{equation*}
  Setting $v=re^{\phi}u$, we have   
  \begin{align*}
    \|v\|_{\Pi}&=\|e^{\phi}u\|_{L^{2}(\mathbb{R}^{3})},
    \qquad
     \|r^{-3/2}\Omega v\|_{\Pi}=
    \|r^{-3/2}e^{\phi}\Omega u\|_{L^{2}(\mathbb{R}^{3})},\\
    \|(\nu+r)Qv\|_{\Pi}&=
    \|e^{\phi}(\nu+r)(\Delta+z^{2}a(x))u\|_{L^{2}(\mathbb{R}^{3})},
  \end{align*}
 which implies the first partial estimate
  \begin{equation}\label{eq:partial}
  \begin{split}
    \textstyle
    \tau
    \|v_{r}\|_{\Pi}^{2}+
    \|\frac{e^{\phi}}{r^{3/2}}\Omega u\|_{L^{2}}^{2}
    +
    \lambda\|e^{\phi}u\|_{L^{2}}
    &
    +2\tau^{2}                               
    \|(\nu+r)e^{\phi}u\|_{L^{2}}^2
    \\
    &\le
    2\nu^{-2}\tau^{-1}     
    \|e^{\phi}(\nu+r) (\Delta+z^{2}a(x))u\|^{2}_{L^{2}}.
    \end{split}
\end{equation}
  In order to handle the $v_{r}$ term, we first define
  $v=rw$, i.e., $w=e^{\phi}u$. Observe that
  \begin{align*}
    \|v_{r}\|_{\Pi}^{2}&=\textstyle
    \int|w_{r}+\tfrac{w}{r}|^{2}dx=
    \int(|w_{r}|^{2}+\frac{|w|^{2}}{r^{2}}
    +2\Re w_{r}\frac{\overline{w}}{r})dx,\\
    \textstyle
    \int2\Re w_{r}\frac{\overline{w}}{r}dx
    & \textstyle
    =\int \frac{1}{r}\widehat{x}\cdot \nabla|w|^{2}dx
    =
    -\int |w|^{2}\nabla \cdot (\frac{\widehat{x}}{r})dx
    =
    -\int \frac{|w|^{2}}{r^{2}}dx,
  \end{align*}
  and hence
  \begin{equation*}
    \textstyle
    \|v_{r}\|_{\Pi}=\|w_{r}\|_{L^{2}(\mathbb{R}^{3})}
    =
    \|e^{\phi}(u_{r}+\phi' u)\|_{L^{2}(\mathbb{R}^{3})},
    \qquad
    \phi'(r)=\tau(2r+1).
  \end{equation*}
  We deduce
  \begin{equation*}
    \|e^{\phi}u_{r}\|^{2}_{L^{2}(\mathbb{R}^{3})}
    \le
    2\|v_{r}\|^{2}_{\Pi}+
    8\nu^{-2}             
    \tau^{2}\|e^{\phi}(\nu+r)u\|^{2}_{L^{2}(\mathbb{R}^{3})}   .
  \end{equation*}
So  estimate \eqref{eq:partial} gives
  \begin{equation*}
    \textstyle
    \|e^{\phi}u_{r}\|^{2}_{L^{2}}
    +
    \|\frac{e^{\phi}}{r^{3/2}}\Omega u\|^{2}_{L^{2}}
    +
    (\lambda+\tau^{2})\|e^{\phi}u\|^{2}_{L^{2}}
    \le
	10\nu^{-4} \tau^{-1}                  
    \|e^{\phi}(\nu+r)(\Delta+z^{2}a)u\|
      ^{2}_{L^{2}}.
  \end{equation*}
  Inserting $|\nabla u|^{2}=|u_{r}|^{2}+\frac{1}{r^{2}}|\Omega u|^{2}$,
  the assertion \eqref{eq:firstcarl} follows.
\end{proof}

We now take a first-order operator $b(x,\partial)$ 
and let $L=\Delta+z^{2}a+b$. Note that
\begin{equation*}
  \|e^{\phi}(\nu+r)(\Delta+z^{2}a)u\|_{L^{2}}
  \le
  \|e^{\phi}(\nu+r)Lu\|_{L^{2}}
  +
  \|e^{\phi}(\nu+r)bu\|_{L^{2}}.
\end{equation*}
Assume that $u$ has support in the ball $|x|\le K$ for some
$K\ge1$ and that $b(x,\partial)$ satisfies
\begin{equation}\label{eq:assLOT}
  |b(x,\partial)v|\le N(|v|+\bra{x}^{-1/2}|\nabla v|).
\end{equation}
We can then estimate
\begin{equation*}
  \|e^{\phi}(\nu+r)bu\|_{L^{2}}^{2}\le                  
  2N^2(K+1)^2(\|e^{\phi}u\|_{L^{2}}^{2}+
  \|e^{\phi}\bra{x}^{-1/2}\nabla u\|_{L^{2}}^{2}).
\end{equation*}
Taking a large parameter $\tau\ge1$, the lower order terms
on the RHS of \eqref{eq:firstcarl} can be absorbed yielding our Carleman estimate.

\begin{proposition}[]\label{pro:carlemanvc}
  Assume $a(x)$ satisfies \eqref{eq:assax2} and
  $b(x,\partial)$ satisfies \eqref{eq:assLOT}.
  Take $z\in \mathbb{C}$ with $\lambda=\Re z^{2}$,
  $\eta=\Im z^{2}$ and   $\nu \lambda\ge2 \eta^{2}$.
  Let $\phi(r)=\tau(r^{2}+r)$, $u\in H^2$ have
  support in $|x|\le K$ for some $K\ge1$, 
  and $\tau\ge\max\{4+M^2, 80\nu^{-4}N^2(K+1)^2\}$.  
 Then the following estimate holds
  \begin{equation}\label{eq:carl3}
    \textstyle
    \|e^{\phi}\bra{x}^{-1/2}\nabla u\|_{L^{2}}^2+
    (\Re z^2+\tau^{2})
    \|e^{\phi}u\|_{L^{2}}^2
    \le
    40\nu^{-4}\tau^{-1}\|e^{\phi}Lu\|_{L^{2}}^2.   
  \end{equation}
\end{proposition}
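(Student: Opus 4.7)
The proposition follows from Lemma \ref{lem:carlemanest} by treating $b(x,\partial)$ as a perturbation and absorbing it into the left-hand side using the largeness of $\tau$.

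First I would apply Lemma \ref{lem:carlemanest}. All its hypotheses hold: the conditions on $a$ and on $z$ coincide with those assumed here, and the required bound $\tau\ge M^{2}+4$ is implied by the stated threshold on $\tau$. The lemma then gives
\begin{equation*}
  \|e^{\phi}\bra{x}^{-1/2}\nabla u\|^{2}_{L^{2}}+(\Re z^{2}+\tau^{2})\|e^{\phi}u\|^{2}_{L^{2}}
  \le 10\nu^{-4}\tau^{-1}\|e^{\phi}(\nu+r)(\Delta+z^{2}a)u\|^{2}_{L^{2}}.
\end{equation*}
Writing $(\Delta+z^{2}a)u=Lu-b(x,\partial)u$ and applying $(A+B)^{2}\le 2A^{2}+2B^{2}$ splits the right-hand side into a principal piece involving $Lu$ and an error piece involving $b(x,\partial)u$.

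Next I would control the error using the support condition $|x|\le K$, which gives $\nu+r\le K+1$, together with hypothesis \eqref{eq:assLOT}, exactly as in the paragraph preceding the proposition:
\begin{equation*}
  \|e^{\phi}(\nu+r)b(x,\partial)u\|^{2}_{L^{2}}
  \le 2N^{2}(K+1)^{2}\bigl(\|e^{\phi}u\|^{2}_{L^{2}}+\|e^{\phi}\bra{x}^{-1/2}\nabla u\|^{2}_{L^{2}}\bigr).
\end{equation*}
Since $\nu\lambda\ge 2\eta^{2}\ge 0$ forces $\Re z^{2}\ge 0$, and $\tau\ge 2$, we have $\Re z^{2}+\tau^{2}\ge 1$, so the bracketed expression is dominated by the left-hand side of the target inequality.

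Inserting these bounds into the lemma, the error contributes the coefficient $40\nu^{-4}\tau^{-1}N^{2}(K+1)^{2}$ in front of the LHS. The threshold $\tau\ge 80\nu^{-4}N^{2}(K+1)^{2}$ is chosen precisely so that this coefficient is at most $1/2$, and absorbing this term to the left yields \eqref{eq:carl3}. The main (and only) delicate point is the numerical bookkeeping in the absorption step: the $K$-dependence is tucked entirely into the threshold on $\tau$, so that the explicit constant displayed on the right-hand side of \eqref{eq:carl3} is the $K$-independent quantity $40\nu^{-4}\tau^{-1}$. No new analytic ingredient beyond Lemma \ref{lem:carlemanest} is required.
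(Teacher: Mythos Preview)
Your approach is exactly the paper's: apply Lemma~\ref{lem:carlemanest}, write $(\Delta+z^2a)u=Lu-bu$, estimate the $bu$ term via \eqref{eq:assLOT} and the support condition, and absorb using the lower bound on $\tau$. One small caveat on the bookkeeping: after the split and absorption your argument actually yields $40\nu^{-4}\tau^{-1}\|e^{\phi}(\nu+r)Lu\|_{L^2}^2$ on the right, and the weight $(\nu+r)\le K+1$ on $\operatorname{supp}u$ then contributes an extra $(K+1)^2$; this factor does \emph{not} get hidden in the $\tau$-threshold as you claim. This is harmless for the subsequent use of the estimate (see the derivation of \eqref{eq:carl4}, which allows $K$-dependent constants), and the paper's own terse argument glosses over the same point.
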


Since $\bra{x}^{-1}\ge (2K)^{-1}$ on the support of $u$,
choosing $\tau$ sufficiently large we deduce from
\eqref{eq:carl3} the estimate
\begin{equation}\label{eq:carl4}
  \|u\|_{X}+ \|zu\|_{Y}+\|\nabla u\|_{Y}
  \le c(K,M,N,\nu)\|(\Delta+z^{2}a(x)+b)u\|_{Y^{*}}
\end{equation}
provided $u$ is supported in $|x|\le K$ and
$z^{2}=\lambda+i \eta$ lies in the parabolic region
$\nu \lambda\ge 2 \eta^{2}$.

\section{The complete resolvent estimate}
\label{sec:comp_esti}

We are ready to patch the previous estimates and deduce
a global one valid for all frequencies $z^{2}=\lambda+i \eta$
in a region of the form
\begin{equation}\label{eq:omega}
  \Omega=\Omega(\nu,\lambda_{1})=
  \{\lambda\le -\lambda_{1}/2\}
  \cup\{\lambda^{2}+\eta^{2}\le \lambda_{1}^{2}\}
  \cup
  \{\nu \lambda\ge 2 \eta^{2}\}
\end{equation}
for suitable $\nu,\lambda_{1}>0.$ Recall that
\begin{itemize}
  \item if $\Re z^{2}\le-\lambda_{0}$ for a sufficiently
  large $\lambda_{0}>0$,
  we can use the elliptic estimate \eqref{eq:ellest};
  \item if $z^{2}$ belongs to an arbitrarily large
  (but fixed) ball $|z^{2}|\le \lambda_{1}$, we can use
  Proposition~\ref{pro:fullopres}.
\end{itemize}
Thus to cover the entire region $\Omega(\nu,\lambda_{1})$
it remains to consider
frequencies $z^{2}=\lambda+i \eta$ in the parabolic
region given by $\lambda\ge\lambda_{0}'$ and 
$\nu \lambda\ge 2 \eta^{2}$
for a sufficiently large $\lambda_{0}'>0$.

To this aim, we combine estimates \eqref{eq:smoocutoff2} 
and \eqref{eq:carl3}
for functions vanishing inside, resp. outside, balls.
The assumptions on $a(x)$ are
\begin{equation}\label{eq:assax3}
  \textstyle
  0<\alpha\le a(x)\le M,\qquad
  \|\bra{x}a^{-1}a'_{-}\|_{\ell^{1}L^{\infty}}\le \frac 14,
  \qquad
  (\nu+r) a'_{-}\le  2a-\nu
\end{equation}
for some $\nu\in(0,1]$. For $b(x,\partial)$ we require
\begin{equation}\label{eq:assbpa}
  |b(x,\partial)v|\le
  C_{b}(\bra{x}^{-2-\delta}|v|+\bra{x}^{-1-\delta}|\nabla v|)
\end{equation}
for some $C_{b},\delta>0$, which is the same as 
\eqref{eq:assbpacom}
in Proposition~\ref{pro:fullopres}.
Note that \eqref{eq:assax3} contains
both \eqref{eq:assa1} and \eqref{eq:assax2},
and \eqref{eq:assbpa}
implies \eqref{eq:assLOT} (after possibly increasing $N$).
On the other hand, if we take $S_{0}>1$ sufficiently large
(and possibly decrease $\delta$), we see
that \eqref{eq:assbpa} implies \eqref{eq:assb2} for
$|x|\ge S$ for any $S\ge S_{0}$. From now on, $S_0$ is fixed.
Thus the assumptions of both Propositions \ref{pro:smooestcutoff}
and \ref{pro:carlemanvc} are verified.

Fix a radial cutoff function $\chi_{0}$ such that
$\chi_{0}=0$ for $|x|\le1$ and $\chi_{0}=1$ for $|x|\ge2$.
Set $S= 2S_{0}$ and $\chi(x)=\chi_{0}(S^{-1}x)$. We then decompose 
\begin{equation}\label{eq:start}
  \|u\|_{X}+ \|zu\|_{Y}+\|\nabla u\|_{Y}\le I+II
\end{equation}
with
\begin{align*}
  I&=\|\chi u\|_{X}+ \|\chi zu\|_{Y}
    +\|\nabla (\chi  u)\|_{Y},\\
  II&=\|(1-\chi) u\|_{X}+ \|(1-\chi)z u\|_{Y}
    +\|\nabla ((1-\chi)  u)\|_{Y}.
\end{align*}
Writing $L=L(z)=\Delta+z^{2}a+b$,
we can apply \eqref{eq:carl4} to $II$ since
$(1-\chi)u$ is compactly supported in $|x|\le 2S$, obtaining   
\begin{equation*}
  II \lesssim
  \|L((1-\chi)u)\|_{Y^{*}}
  \lesssim
  \|Lu\|_{Y^{*}}+
  \||u|+|\nabla u|\|_{L^{2}(S\le |x|\le 2S)}  
\end{equation*}
The last term at the right is supported in $|x|\ge 2S_0$. It 
can thus be estimated via \eqref{eq:smoocutoff2} 
in  Proposition \ref{pro:smooestcutoff} with $S_{0}$ 
instead of $S$, and hence
\begin{equation*}
  \textstyle
  II \le
  C\|Lu\|_{Y^{*}}
  + \rho (1+|z|)\|u\|_{Y}
  + C
  \|(|u|+|\nabla u|)u\| _{L^{1}(|x|\sim S_{0})}^{1/2}
\end{equation*}
where $\rho>0$ is arbitrarily small
and $C=C(\alpha,M,\sigma,\delta,\rho,S_{0})$.
We next treat $I$ again using \eqref{eq:smoocutoff2}
with $S_{0}$ instead of $S$ (recall that we have $S= 2S_{0}$), 
which yields
\begin{equation*}
  I \le
  C\|Lu\|_{Y^{*}} + \rho (1+|z|)\|u\|_{Y}   
    + C\|(|u|+|\nabla u|) u\|_{L^{1}(|x|\sim S_{0})}^{1/2}.
\end{equation*}
Summing up, we get
\begin{equation*}
  \textstyle
  I+II\le
  C\|Lu\|_{Y^{*}}+
  \rho (1+|z|)\|u\|_{Y}+
  C\|(|u|+|\nabla u|) u\|_{L^{1}(|x|\sim S_{0})}^{1/2}
\end{equation*}
For every $\rho>0$, the last summand is bounded by
\begin{equation}\label{eq:estcom}
  \|(|u|+|\nabla u|)u\| _{L^{1}(|x|\sim S_{0})}^{1/2}
  \le
  \rho\|\nabla u\|_{Y}+
  C(S_{0},\rho)\|u\|_{Y},
\end{equation}
leading to
\begin{equation*}
  \|u\|_{X}+ \|zu\|_{Y}+\|\nabla u\|_{Y}
  \le
  C\|Lu\|_{Y^{*}}+
  \rho(\|zu\|_{Y}+\|\nabla u\|_{Y})+
  C\|u\|_{Y}.
\end{equation*}
Here $\rho>0$ is arbitrary and 
$C=C(\alpha,M,\sigma,\delta,\rho,S_{0})$.
Taking $\rho=1/2$ and absorbing two terms by the LHS, we infer
\begin{equation*}
  \|u\|_{X}+ \|zu\|_{Y}+\|\nabla u\|_{Y}
  \le
  C\|Lu\|_{Y^{*}}+
  C\|u\|_{Y}.
\end{equation*}
If we assume $|z|\ge 2C$, we can also absorb the last summand
and we obtain
\begin{equation*}
  \|u\|_{X}+ \|zu\|_{Y}+\|\nabla u\|_{Y}
  \le
  C\|Lu\|_{Y^{*}}
\end{equation*}
for all $z$ in the region 
$|z|\ge 2C(\alpha,M,\sigma,\delta,S_{0})$ such that
$\nu \lambda\ge 2 \eta^{2}$.
We now choose a sufficiently large $\lambda_{1}>0$ in the
definition \eqref{eq:omega} of $\Omega$ 
and employ \eqref{eq:ellest} and Proposition~\ref{pro:fullopres}
as indicated after \eqref{eq:omega}.
In this way, the following main resolvent estimate is proved.

\begin{proposition}[]\label{pro:resesttot}
  Assume $a(x)$ and $b(x,\partial)$ satisfy
  \eqref{eq:assax3}, \eqref{eq:assbpa}, 
  $|x|^{2}\bra{x}^{\delta}(a-1)\in L^{\infty},$
  and the spectral assumption (S).
  Then we can find $\lambda_{1}>0$ such that
  for all $z^{2}=\lambda+i \eta\in \mathbb{C}$
  in the region $\Omega=\Omega(\nu,\lambda_{1})$ 
  defined in \eqref{eq:omega}, the operator
  $L(z)=\Delta+z^{2}a(x)+b(x,\partial)$ satisfies
  the estimate
  \begin{equation}\label{eq:largefreqfin}
    \|u\|_{X}+ \|zu\|_{Y}+\|\nabla u\|_{Y}\lesssim 
    \|L(z)u\|_{Y^{*}}
  \end{equation}
  with a constant uniform in $z$.
\end{proposition}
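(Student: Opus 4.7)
The plan is to cover $\Omega(\nu,\lambda_1)$ by patching together three separate bounds, one for each of the three subregions in the definition \eqref{eq:omega}: the elliptic estimate \eqref{eq:ellest} in the left tail $\{\lambda \le -\lambda_1/2\}$, the compactness argument of Proposition~\ref{pro:fullopres} on the bounded disc $\{|z^2|\le \lambda_1\}$, and a Morawetz--Carleman combination in the parabolic region $\{\nu\lambda\ge 2\eta^2\}$. The first two are essentially available off the shelf: \eqref{eq:ellest} applies as soon as $\Re z^2 \le -\lambda_0(a,b)$, so fixing $\lambda_1 \ge 2\lambda_0$ handles the left tail, while Proposition~\ref{pro:fullopres} yields a bound with $z$-continuous constant that is in particular bounded uniformly on the fixed disc $\{|z^2|\le \lambda_1\}$.

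The substantive work is the remaining tail $\{\nu\lambda\ge 2\eta^2\}\cap\{|z^2|\ge \lambda_1\}$, for which I would proceed as follows. Fix a radial cutoff $\chi(x)=\chi_0(x/S)$ with $\chi_0 = 0$ for $|x|\le 1$ and $\chi_0=1$ for $|x|\ge 2$, taking $S = 2S_0$ where $S_0 \ge 1$ is large enough that the global assumption \eqref{eq:assbpa} on $b(x,\partial)$ implies the localized smallness \eqref{eq:assb2} on $\{|x|\ge S_0\}$ (after slightly decreasing $\delta$ if necessary). Decompose $u=(1-\chi)u+\chi u$. The compactly supported piece $(1-\chi)u$ falls under the Carleman bound \eqref{eq:carl4} from Proposition~\ref{pro:carlemanvc}, with $K=2S$; expanding $L((1-\chi)u)=(1-\chi)Lu-[L,\chi]u$ produces $\|Lu\|_{Y^*}$ plus a commutator supported in the annulus $|x|\sim S_0$, bounded by $\|(|u|+|\nabla u|)\|_{L^2(|x|\sim S_0)}$. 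The outer piece $\chi u$ is supported where $b(x,\partial)$ is small, so Proposition~\ref{pro:smooestcutoff} gives \eqref{eq:smoocutoff2} with $S_0$ in place of $S$, producing again $\|Lu\|_{Y^*}$, a small $\rho(1+|z|)\|u\|_Y$ error, and an annular term of the same shape.

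Adding the two bounds, the annular commutator is controlled by $\|(|u|+|\nabla u|)u\|_{L^1(|x|\sim S_0)}^{1/2}\le \rho\|\nabla u\|_Y+C(\rho,S_0)\|u\|_Y$, cf.\ \eqref{eq:estcom}. Taking $\rho$ small allows absorption of the $\rho\|\nabla u\|_Y$ and $\rho(1+|z|)\|u\|_Y$ terms into the left-hand side. A residual $C\|u\|_Y$ remains, but since $\|u\|_Y\le |z|^{-1}\|zu\|_Y$, the constraint $|z|\ge 2C$ lets me absorb it as well; this fixes the final choice of $\lambda_1$ in terms of $\alpha$, $M$, $\sigma$, $\delta$, and $S_0$.

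I expect the principal subtlety to be matching parameters across the three regimes. The Carleman constant $c(K,M,N,\nu)$ in \eqref{eq:carl4} grows with the support radius $K=2S$, so $S_0$ must be chosen and fixed \emph{before} the Carleman threshold for $\tau$ is set; simultaneously, $S_0$ must be large enough for the Morawetz smallness condition \eqref{eq:assb2} to hold with a sufficiently small $\sigma$. A secondary bookkeeping point is that the $\|u\|_Y$ absorption works only once $|z|$ exceeds a constant depending on all the other parameters, which is precisely why the final estimate is restricted to $|z|^2\ge \lambda_1$ with $\lambda_1$ chosen sufficiently large at the end.
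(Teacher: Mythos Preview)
Your overall strategy is exactly the paper's: cover $\Omega(\nu,\lambda_1)$ by the elliptic estimate on the left tail, Proposition~\ref{pro:fullopres} on the bounded disc, and a Carleman--Morawetz patch on the parabolic tail with the cutoff $\chi$ at scale $S=2S_0$. The parameter bookkeeping you describe (fix $S_0$ first, then the Carleman $\tau$, then $\lambda_1$) is also correct.

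There is, however, one genuine gap in the patching step. The Carleman commutator you get from $(1-\chi)u$ is
\[
\|[L,\chi]u\|_{Y^*}\lesssim \||u|+|\nabla u|\|_{L^2(S\le|x|\le 2S)},
\]
which contains a term $\|\nabla u\|_{L^2(S\le|x|\le 2S)}$ with \emph{no} small prefactor. This is not of the same shape as the Morawetz annular error $\|(|u|+|\nabla u|)u\|_{L^1(|x|\sim S_0)}^{1/2}$, and it is \emph{not} controlled by \eqref{eq:estcom}: the bound \eqref{eq:estcom} applies to the quadratic $L^1$ quantity, not to $\|\nabla u\|_{L^2}$ on an annulus. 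If you try to estimate $\|\nabla u\|_{L^2(|x|\sim S)}\lesssim S^{1/2}\|\nabla u\|_Y$ directly, you get a large constant that cannot be absorbed.

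The fix, which is what the paper does, is to observe that the Carleman commutator is supported in $S\le|x|\le 2S$, hence in $\{|x|\ge 2S_0\}$. Therefore $\||u|+|\nabla u|\|_{L^2(S\le|x|\le 2S)}$ is itself controlled by the left-hand side of \eqref{eq:smoocutoff2} applied at level $S_0$ (to the full function $u$), which converts it into $C\|Lu\|_{Y^*}+\rho(1+|z|)\|u\|_Y+C\|(|u|+|\nabla u|)u\|_{L^1(|x|\sim S_0)}^{1/2}$. Only after this second application of the Morawetz bound do all remaining annular terms have the $L^1$ form amenable to \eqref{eq:estcom} and absorption. Note also that the two annuli are genuinely different: the Carleman commutator lives in $[2S_0,4S_0]$, while the Morawetz error lives in $[S_0,2S_0]$; your writing ``$|x|\sim S_0$'' for both obscures the reason this works.
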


The same proof applies to a matrix operator of the special form
\begin{equation*}
  L(z)=I_{3}\Delta+I_{3}a(x)z^{2}+b(x,\partial).
\end{equation*}

\begin{remark}[]\label{rem:onass}
  The last condition in \eqref{eq:assax3} is implied by
  \begin{equation*}
    \frac{a'_{-}}{a}\le \frac{1}{\nu+r}
  \end{equation*}
  (provided $\nu$ is small enough). Thus we see that
  the following assumption
  \begin{equation}
    a'_{-}(x)\le \nu_{0}a(x)\bra{x}^{-1-\delta}
  \end{equation} \label{eq:assax3-var}
  implies the last two conditions in \eqref{eq:assax3},
  provided $\nu_{0}$ is small enough.
\end{remark}

\section{Smoothing estimates}\label{sec:smoo_esti}

We shall now convert estimate \eqref{eq:largefreqfin}
into a smoothing estimate for the wave equation.
First, we repackage \eqref{eq:largefreqfin} in
a weaker form in terms of weighted $L^{2}$ norms,
in order to apply the Laplace transform. 
Recall from Propositions~\ref{pro:spectralass1}--\ref{pro:spectralass3}
that hypothesis (S) is valid for our Maxwell system, under mild extra 
decay conditions.

\begin{corollary}[]\label{cor:resestweaker}
  Let $L(z)=I_{3}\Delta+I_{3}a(x)z^{2}+b(x,\partial)$ be
  a matrix operator such that
  \begin{enumerate}
    \item 
    $\alpha=\inf a(x)>0$,
    $\bra{x}^{2+\delta}(a-1)\in L^{\infty}$,
    and
    $a_{-}'\le \frac 14 (1-2^{-\delta})^{-1} a \bra{x}^{-1-\delta}$,   
    \item 
    $|b(x,\partial)v|\lesssim
    \bra{x}^{-2-\delta}|v|+\bra{x}^{-1-\delta}|\nabla v|$,
    \item the spectral assumption (S) holds
  \end{enumerate}
   for some $\delta>0$.
  Then there exists $\lambda_{1}>0$ such that
  for any $z$ with $z^{2}\in\Omega(1 \wedge \alpha,\lambda_{1})$ 
  we have
  \begin{equation}\label{eq:globresL2s}
    \textstyle
    \|v\|_{L^{2}_{-3/2-}}
    + \|zv\|_{L^{2}_{-1/2-}}
    +\|\nabla v\|_{L^{2}_{-1/2-}}\lesssim 
    \|L(z)v\|_{L^{2}_{1/2+}}
  \end{equation}
  where we use the notation 
  $\|u\|_{L^{2}_{s}}=\|\bra{x}^{s}u\|_{L^{2}(\mathbb{R}^{3})}$.
\end{corollary}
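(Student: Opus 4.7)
The plan is to derive this directly from Proposition~\ref{pro:resesttot} by verifying that hypotheses (1)--(3) imply its assumptions (with $\nu = 1 \wedge \alpha$), and then converting the sharp-norm estimate to the weighted $L^2$ formulation via the embeddings \eqref{eq:XY-bounds}.

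First I would translate the hypotheses. Set $M := 1 + \|\bra{x}^{2+\delta}(a-1)\|_{L^\infty}$, which is finite by (1), so $\alpha \le a \le M$ holds. The condition $|x|^2\bra{x}^\delta(a-1) \in L^\infty$ required by Proposition~\ref{pro:resesttot} is weaker than (1), the operator bound \eqref{eq:assbpa} is literally (2), and the spectral assumption (S) is exactly (3). The two radial-derivative hypotheses in \eqref{eq:assax3} need a small computation: from (1) we have $\bra{x}a^{-1}a'_- \le \tfrac{1}{4}(1-2^{-\delta})^{-1}\bra{x}^{-\delta}$, so by summing $L^\infty$ norms on the dyadic annuli $A_j$ (a geometric series in $2^{-\delta}$), the numerical factor chosen in (1) is calibrated so that $\|\bra{x}a^{-1}a'_-\|_{\ell^1 L^\infty} \le 1/4$. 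For the remaining condition $(\nu+r)a'_- \le 2a-\nu$, I would pick $\nu = 1 \wedge \alpha \in (0,1]$, so $\nu \le a$ and $2a-\nu \ge a$; then $(\nu+r)a'_- \le 2\bra{x}\cdot \tfrac{a\bra{x}^{-1-\delta}}{4(1-2^{-\delta})} \le \tfrac{a\bra{x}^{-\delta}}{2(1-2^{-\delta})}$, which is dominated by $a$ and hence by $2a-\nu$ (with a possible further shrinking of $\nu$, depending only on $\alpha$ and $\delta$).

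Next I would invoke Proposition~\ref{pro:resesttot} with $\Omega = \Omega(1\wedge\alpha,\lambda_1)$ to obtain the uniform resolvent bound
\begin{equation*}
  \|v\|_{X} + \|zv\|_{Y} + \|\nabla v\|_{Y} \lesssim \|L(z)v\|_{Y^*}
\end{equation*}
for $z^2 \in \Omega(1\wedge\alpha,\lambda_1)$. To conclude, I would apply the elementary embeddings \eqref{eq:XY-bounds}: for any $\delta' > 0$,
\begin{equation*}
  \|\bra{x}^{-3/2-\delta'}v\|_{L^2} \lesssim \|v\|_{X}, \qquad
  \|\bra{x}^{-1/2-\delta'}v\|_{L^2} \lesssim \|v\|_{Y}, \qquad
  \|w\|_{Y^*} \lesssim \|\bra{x}^{1/2+\delta'}w\|_{L^2},
\end{equation*}
inserting them termwise into the previous inequality to reach \eqref{eq:globresL2s}.

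Since all the difficult analysis---the Morawetz estimate of Section~\ref{sub:mora_esti}, the Carleman estimate of Section~\ref{sub:carl_esti}, and the low-frequency Fredholm/spectral argument of Section~\ref{sec:low_freq}---has already been packaged inside Proposition~\ref{pro:resesttot}, there is no substantial obstacle in this corollary. The only item of genuine bookkeeping is the dyadic computation that turns the pointwise bound on $a'_-$ in (1) into the $\ell^1 L^\infty$ smallness required by \eqref{eq:assax3}; the rest is a mechanical translation between the sharp Agmon--H\"ormander norms $X, Y, Y^*$ and the standard weighted $L^2_s$ scale.
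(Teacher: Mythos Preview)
Your proposal is correct and follows exactly the route taken in the paper: verify that hypotheses (1)--(3) feed into Proposition~\ref{pro:resesttot} with $\nu = 1 \wedge \alpha$, then downgrade the $X,Y,Y^{*}$ estimate \eqref{eq:largefreqfin} to weighted $L^{2}$ via \eqref{eq:XY-bounds}. The paper's own proof is a two-line reference to these facts; your version is simply more explicit about the dyadic summation and the radial-derivative check, which is the only content of the ``easy to check'' claim.
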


\begin{proof}%[Proof of ...]
  It is easy to check that assumption (1)
  implies \eqref{eq:assax3}, with $\nu=1 \wedge \alpha$. 
  In view of \eqref{eq:XY-bounds}, estimate \eqref{eq:largefreqfin}
  implies \eqref{eq:globresL2s}.
\end{proof}

Let $u:\mathbb{R}_{t}\times \mathbb{R}_{x}^3\to\mathbb{C}^{3}$ 
 be a function with $u(t,x)=0$ for $t<0$ and such that the maps
 $\partial_t^k u:\mathbb{R}\to H^{2-k}(\mathbb{R}^3)$
are  continuous and grow sub-exponentially for $k=0,1,2$. 
For $z=\alpha+i\beta$ in the upper half plane $\Im z>0$, then 
 the `damped' Fourier transform
\begin{equation*}
  \textstyle
  v(z,\cdot):= 
  \int_{-\infty}^{+\infty}e^{i t z} u(t,\cdot)dt
\end{equation*}
is defined in $L^2(\mathbb{R}^3)$. It satisfies
\begin{equation*}
  \textstyle
  -z^{2}v(z,\cdot)=
  \int_{-\infty}^{+\infty}e^{itz}\partial_{t}^{2}u(t,\cdot)dt,
  \qquad
  (\Delta+b) v(z,\cdot)=
  \int_{-\infty}^{+\infty}e^{itz}(\Delta+b)u(t,\cdot)dt
\end{equation*}
 so that
\begin{equation*}
  \textstyle
  (\Delta+az^{2}+b)v(z,x)=
  \int_{-\infty}^{+\infty}e^{itz}(\Delta+b-a \partial^{2}_{t})
  u(t,x)dt
\end{equation*}
for a.e.\ $x\in \mathbb{R}^3$.
Plancherel's formula thus yields
\begin{equation*}
  \textstyle
 \int|(\Delta+(\alpha+i\beta)^{2}a+b)   
  v(\alpha+i\beta,x)|^{2}d \alpha=
 2\pi  \int e^{-2 \beta t}
  |(\Delta+b-a \partial^{2}_{t})u(t,x)|^{2}dt.
\end{equation*}
We multiply by the weight $\bra{x}^{2s}$ and
integrate also in $x$, obtaining
\begin{equation*}
  \|(\Delta+ (\cdot+i \beta)^{2}a+b)v\|
    _{L^{2}(d\alpha)L^{2}_{s}(\mathbb{R}^{3})}
  \eqsim
  \|e^{-\beta t}
  (\Delta+b-a \partial^{2}_{t})u\|
  _{L^{2}(dt)L^{2}_{s}(\mathbb{R}^{3})}
\end{equation*}
for any $s\in\mathbb{R}$, though the norms could be infinite. 
In a similar way we deduce
\begin{align*}
  \|v(\cdot+i\beta)\|
  _{L^{2}(d\alpha)L^{2}_{s}(\mathbb{R}^{3})}
  &\eqsim
  \|e^{-\beta t}u\|
  _{L^{2}(dt)L^{2}_{s}(\mathbb{R}^{3})},\\
  \|\nabla v(\cdot+i\beta)\|
  _{L^{2}(d\alpha)L^{2}_{s}(\mathbb{R}^{3})}
  &\eqsim
  \|e^{-\beta t}\nabla u\|
  _{L^{2}(dt)L^{2}_{s}(\mathbb{R}^{3})},\\
  |z|\|v(\cdot+i\beta)\|
  _{L^{2}(d\alpha)L^{2}_{s}(\mathbb{R}^{3})}
  &\eqsim
  \|e^{-\beta t}\partial_{t}u\|
  _{L^{2}(dt)L^{2}_{s}(\mathbb{R}^{3})}.
\end{align*}
Note that if $z=\alpha+i\beta$ with $\beta>0$
sufficiently small, then $z^{2}=\lambda+i \eta$
lies in the parabolic region $\Omega$. 
We assume that $G=(\Delta+b-a \partial^{2}_{t})u$ belongs to 
$L^2L^2_{1/2+}$. Estimate \eqref{eq:globresL2s} thus implies that
\begin{equation}\label{eq:partwave}
\begin{split}
  \|e^{- \beta t}u\|_{L^{2}(dt)L^{2}_{-3/2-}}
  +
  \|  &e^{- \beta t}\partial_{t}u\|_{L^{2}(dt)L^{2}_{-1/2-}}
  +
  \|e^{- \beta t}\nabla u\|_{L^{2}(dt)L^{2}_{-1/2-}}\\
  &\lesssim
  \|e^{-\beta t}
  (\Delta+b-a \partial^{2}_{t})u\|
  _{L^{2}(dt)L^{2}_{1/2+}}.
\end{split}
\end{equation}
for  sufficiently small $\beta>0$. (In particular, the involved norms are  
finite.) Here the implicit constant does not depend on $\beta$, so that 
one can let $\beta\to0$ by Fatou's lemma.
As usual, no modification is necessary in the matrix case.

We apply \eqref{eq:partwave} to a solution of the problem
\begin{equation}\label{eq:wavevarF}
  (a \partial_{t}^{2}-\Delta-b(x,\partial))U=G(t,x),
  \qquad
  U(0,x)=
  \partial_tU(0,x)=0.
\end{equation}

\begin{proposition}[]\label{pro:smooWE}
  Let $U(t,x):\mathbb{R}_{t}\times \mathbb{R}_{x}^3\to\mathbb{C}^{3}$ 
  be a solution of the Cauchy problem
  \eqref{eq:wavevarF} subject to the above growth conditions, 
  where $a(x)$ and $b(x,\partial)$ are as in 
  Corollary \ref{cor:resestweaker}
  and $\bra{x}^{1/2+} G\in L^{2}L^{2}$.
  Then the following estimate holds:
  \begin{equation}\label{eq:smooU}
    \|U\|_{L^{2}L^{2}_{-3/2-}}
    +
    \|\partial_{t} U\|_ {L^{2}L^{2}_{-1/2-}}
    +
    \|\nabla U\|_ {L^{2}L^{2}_{-1/2-}}
    \lesssim
    \|G\|_ {L^{2}L^{2}_{1/2+}}.
  \end{equation}
\end{proposition}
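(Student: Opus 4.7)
The strategy is to bring $U$ into the setup of the paragraph just before \eqref{eq:partwave}, apply that estimate, and then let the damping parameter $\beta$ tend to zero.

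First I would extend $U$ and $G$ by zero for $t<0$; the vanishing Cauchy data make this a distributional solution of \eqref{eq:wavevarF} on all of $\mathbb{R}_t$. To verify the sub-exponential growth required, I would run the standard hyperbolic energy identity obtained by pairing $(a\partial_t^2-\Delta-b(x,\partial))U=G$ with $\partial_tU$. The uniform positivity and boundedness of $a$ in assumption (1) of Corollary~\ref{cor:resestweaker}, together with
\[
 \|b(x,\partial)U\|_{L^2}\lesssim\|\bra{x}^{-2-\delta}U\|_{L^2}+\|\bra{x}^{-1-\delta}\nabla U\|_{L^2}\lesssim\|\nabla U\|_{L^2}
\]
(which follows from assumption (2) and Hardy's inequality in dimension three) give, via Gronwall, an at most exponential control
\[
 \|U(t,\cdot)\|_{H^1}+\|\partial_tU(t,\cdot)\|_{L^2}\le Ce^{Ct}\|G\|_{L^2L^2_{1/2+}},
\]
where I used the embedding $L^2_{1/2+}\hookrightarrow L^2$ on the source. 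This places $U$ in the admissible class of the paragraph preceding \eqref{eq:partwave}.

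Since \eqref{eq:wavevarF} may be rewritten as $(\Delta+b-a\partial_t^2)U=-G$, estimate \eqref{eq:partwave} produces, for every sufficiently small $\beta>0$,
\[
 \|e^{-\beta t}U\|_{L^2L^2_{-3/2-}}+\|e^{-\beta t}\partial_tU\|_{L^2L^2_{-1/2-}}+\|e^{-\beta t}\nabla U\|_{L^2L^2_{-1/2-}}\lesssim \|e^{-\beta t}G\|_{L^2L^2_{1/2+}}\le \|G\|_{L^2L^2_{1/2+}},
\]
with implicit constant independent of $\beta$ (the last bound uses that $G$ vanishes for $t<0$ and $e^{-\beta t}\le 1$ for $t\ge 0$). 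Sending $\beta\downarrow 0$ and invoking monotone convergence on each term on the left upgrades this to \eqref{eq:smooU}.

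The main subtlety is reconciling the exponential rate $C$ appearing in the a priori bound with the smallness of $\beta$ demanded in \eqref{eq:partwave}, which comes from the parabolic condition $\nu\lambda\ge 2\eta^2$ defining $\Omega(1\wedge\alpha,\lambda_1)$ in \eqref{eq:omega}. If the two rates conflict, the standard fix is to precede the Plancherel step by a time cutoff: apply the argument to $\chi_T(t)U(t,\cdot)$ with a smooth $\chi_T$ equal to $1$ on $[0,T]$ and supported in $[-1,T+1]$, so that the truncated function has compact support in $t$ and hence trivially meets the sub-exponential hypothesis. The modified source $\chi_TG+a(\chi_T''U+2\chi_T'\partial_tU)$ picks up commutator terms localized near $t=T$, which are handled by absorbing them using the provisional estimate on $[0,T]$ and then letting $T\to\infty$. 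Either way, the content of the proposition is the direct Plancherel translation of the weighted resolvent bound \eqref{eq:globresL2s}.
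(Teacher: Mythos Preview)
Your approach is essentially the paper's: apply \eqref{eq:partwave} and let $\beta\downarrow0$ (the paper says Fatou, you say monotone convergence; since $U$ vanishes for $t<0$ both work). Two remarks, however.

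First, the growth conditions are \emph{hypotheses} of the proposition (``subject to the above growth conditions''), so your energy-estimate paragraph and the subsequent worry about reconciling the exponential rate with the smallness of $\beta$ are unnecessary: you are given sub-exponential growth, not asked to derive it. Your proposed time-cutoff repair is reasonable but not needed here; in the paper this issue surfaces only later, in Proposition~\ref{pro:estMaxw}, where it is handled by approximating the forcing by functions with compact support in time.

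Second, and more substantively, your extension of $U$ and $G$ by zero for $t<0$ silently assumes $G$ is already supported in $\{t\ge0\}$. The problem is posed on all of $\mathbb{R}_t$, and $G$ may well be nonzero for $t<0$, in which case the backward solution is nontrivial and your extension is not a solution of the original equation. The paper closes this gap in one line: once the estimate is established for $G$ supported in $\{t\ge0\}$, time reversal $t\mapsto-t$ gives it for $G$ supported in $\{t\le0\}$, and linearity finishes the general case. You should add this step.
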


\begin{proof}%[Proof of ...]
  Assume $G=0$ for $t\le0$, so that
  $U=0$ for $t\le0$ and we can apply \eqref{eq:partwave}.
  Letting $\beta \downarrow0$ we obtain \eqref{eq:smooU}.
  The same estimate is valid if $G=0$ for $t\ge0$
  (just by time reversal $t\to -t$).
  By linearity, estimate \eqref{eq:smooU} holds
  for arbitrary $G$.
\end{proof}

We next focus on the actual Maxwell equations
\begin{equation*}
  \textstyle
   \partial_{t}^2\E+
   \frac 1 \epsilon\nabla \times\frac 1\mu \nabla \times \E=0,
  \qquad
  \nabla \cdot(\epsilon \E)=0,
\end{equation*}
or equivalently (with $\D= \epsilon\E$)
\begin{equation*}
  \textstyle
   \partial_{t}^2\D+
   \nabla \times\frac 1\mu \nabla \times \frac1{\epsilon}\D=0,
  \qquad
  \nabla \cdot\D=0.
\end{equation*}
Let $\mathcal{H}$ be the Hilbert space
\begin{equation}\label{eq:HS-D}
  \mathcal{H}=\{u\in L^{2}(\mathbb{R}^{3};\mathbb{C}^{3})
  \colon
  \nabla \cdot u=0\}
\end{equation}
endowed with the scalar product 
$(u,v)_{\mathcal{H}}=\int \epsilon^{-1} u \cdot \overline{v}dx$
and the corresponding
norm $\|u\|_{\mathcal{H}}=(u,u)_{\mathcal{H}}^{1/2}$,
and let $H=H(x,\partial)$ be the operator
\begin{equation}\label{eq:opH}
  \textstyle
  H(x,\partial)U=
  \nabla \times\frac 1\mu \nabla \times \frac1{\epsilon}U
\end{equation}
which is selfadjoint and non negative on $\mathcal{H}$. 
The spectral theorem implies that the flow 
$e^{it \sqrt{H}}$ is well defined, bounded and
continuous on $\mathcal{H}$.
Let $U(t,x)$ be the solution to
\begin{equation}\label{eq:maxwHinhom}
 \partial_{t}^2 U+HU=F(t,x),
  \qquad
  U(0,x)=0,
  \qquad
 \partial_{t} U(0,x)=0,
\end{equation}
where $F$ is $\mathcal{H}$--valued and hence
\begin{equation*}
  \nabla \cdot F=0.
\end{equation*}
By Duhamel's formula $U$ is given by
\begin{equation*}
  \textstyle
  U(t,x)=\int_{0}^{t}H^{-1/2}\sin((t-s)\sqrt{H})F(s)ds.
\end{equation*}
We thus have
\begin{equation}\label{eq:duh1}
  \textstyle
    \sqrt{H}U=\int_{0}^{t}\sin((t-s)\sqrt{H})F(s)ds,
    \quad
    \partial_{t}U=\int_{0}^{t}\cos((t-s)\sqrt{H})F(s)ds.
\end{equation}
%so that
%\begin{equation}\label{eq:reprexp}
%  \textstyle
%  \int_{0}^{t}e^{i(t-s)\sqrt{H}}F(s)ds=
%  i\sqrt{H}U+\partial_{t}U.                      
%\end{equation}
Note that also $U$
is divergence free and hence
$HU$ is given by 
\begin{equation*}
  HU=
  \textstyle
  -\frac{1}{\epsilon \mu}\Delta U
  +
  \nabla \times \frac 1\mu \nabla \times \frac1\epsilon U
  - \frac{1}{\epsilon \mu}\nabla \times \nabla \times U.
\end{equation*}
Therefore problem \eqref{eq:maxwHinhom} can be written
in the form \eqref{eq:wavevarF} with the choices
\begin{equation}\label{eq:choiab}
  \textstyle
  a=\epsilon \mu,
  \qquad
  b(x,\partial)U=
  \nabla \times \nabla \times U
  -
  \epsilon \mu
  \nabla \times \frac 1\mu \nabla \times \frac1\epsilon U,
  \qquad 
  G=aF.
\end{equation}

We collect in the next lemma some basic estimates
involving $\sqrt{H}$ and $H$. Observe that a map  $u\in\dot{H}^2$
satisfies $\nabla u\in L^6$ and $|u|\lesssim \bra{x}^{1/2}$. 
Hence, $H:\dot{H}^2 \to L^2$ is bounded if
\begin{equation}\label{eq:decayem0}
|\nabla \epsilon|+|\nabla \mu|\lesssim\bra{x}^{-1-\delta},\qquad
    |D^{2}\epsilon|\lesssim \bra{x}^{-2-\delta}. 
\end{equation}

\begin{lemma}[]\label{lem:riesz}
  Let $H$ be the operator in \eqref{eq:opH}
  and $c,\delta>0$.
  Assume that the coefficients $\epsilon,\mu$ and their first and second derivatives
  are bounded and that $\epsilon,\mu\ge c$.
  We take divergence free functions $f$ from $\dot{H}^1$ in (i) and (ii), 
  from $H^2$ in  (iii), and from $\dot{H^2}$ in (iv).
  Then  the following estimates hold.
  \begin{enumerate}[label=(\roman*)]
    \item If $|\nabla \epsilon|\lesssim \bra{x}^{-1-\delta}$,
    then
    \begin{equation}\label{eq:eqpowH0}
      \|\sqrt {H}f\|_{L^2}\lesssim  \|\nabla f\|_{L^2}.
    \end{equation}
    \item If 
     \begin{equation}\label{eq:extra-decay}
  |\nabla \epsilon|+|\nabla \mu|\lesssim \bra{x}^{-\frac32-\delta}, \qquad
  |D^{2}\epsilon|\lesssim \bra{x}^{-\frac52-\delta},
  \end{equation}
     %$|\nabla \epsilon|\lesssim \langle x\rangle^{-1-\delta}$ 
    %and $|D^{2}\epsilon|\lesssim \bra{x}^{-2-\delta}$, 
    then
    \begin{equation}\label{eq:eqpowH1}
      \|\sqrt {H}f\|_{L^2}\eqsim  \|\nabla f\|_{L^2}.
    \end{equation}
    \item We have
    \begin{equation}\label{eq:mah}
      \|\bra{x}^{-\frac12-\delta}\Delta f\|_{L^{2}}
      \lesssim
      \|\bra{x}^{-\frac12-\delta}H f\|_{L^{2}}
      +
      \|\bra{x}^{-\frac12-\delta} f\|_{L^{2}},
    \end{equation}
    \begin{equation}\label{eq:mah2}
      \|\bra{x}^{-\frac12-\delta}H f\|_{L^{2}}
      \lesssim
      \|\bra{x}^{-\frac12-\delta}\Delta f\|_{L^{2}}
      +
      \|\bra{x}^{-\frac12-\delta} f\|_{L^{2}}.
    \end{equation}
    \item If \eqref{eq:extra-decay} is true,
%    \begin{equation}\label{eq:extra-decay}
%  |\nabla \epsilon|+|\nabla \mu|\lesssim \bra{x}^{-\frac32-\delta}, \qquad
%  |D^{2}\epsilon|\lesssim \bra{x}^{-\frac52-\delta},
% \end{equation}
    then for $\sigma\in(0,\delta)$
    \begin{equation}\label{eq:eqpowH2}
    %  \|\bra{x}^{-\frac 12-\sigma}\nabla f\|_{L^{2}}
    %  \lesssim  \|\bra{x}^{-\frac 12-\sigma}\sqrt{H} f\|_{L^2},
	%\qquad
      \|\bra{x}^{-\frac 12-\sigma}\Delta f\|_{L^{2}}
      \lesssim  \|\bra{x}^{-\frac 12-\sigma}H f\|_{L^2}.
    \end{equation}
  \end{enumerate}
\end{lemma}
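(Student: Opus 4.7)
All four parts rest on two structural ingredients: that $H$ is non-negative self-adjoint on $\mathcal H$, and the pointwise identity
\[
 -\Delta f = \epsilon\mu\, Hf + b(x,\partial)f
\]
holding on divergence free $f$, obtained by combining $\nabla\times\nabla\times f = -\Delta f$ with the derivation leading to \eqref{eq:waveD1}, so that $b$ obeys the decay in \eqref{eq:assbpa}.

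For (i), the functional calculus gives $\|\sqrt H f\|_{\mathcal H}^{2} = (Hf,f)_{\mathcal H}$, and integrating the two curls by parts (boundary terms disposed of by density in $\dot H^{1}$) yields $\|\sqrt H f\|_{\mathcal H}^{2} = \int\mu^{-1}|\nabla\times(\epsilon^{-1}f)|^{2}\,dx$. Expanding $\nabla\times(\epsilon^{-1}f) = \epsilon^{-1}\nabla\times f+\nabla(\epsilon^{-1})\times f$ and using $|\nabla\epsilon|\lesssim\bra{x}^{-1-\delta}$ produces the pointwise bound $|\nabla\times(\epsilon^{-1}f)|^{2}\lesssim|\nabla f|^{2}+\bra{x}^{-2-2\delta}|f|^{2}$; Hardy's inequality controls the weighted term and the uniform positivity of $\epsilon,\mu$ gives $\|\cdot\|_{\mathcal H}\eqsim\|\cdot\|_{L^{2}}$, establishing \eqref{eq:eqpowH0}. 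For (ii), the identity $\|\nabla f\|_{L^{2}}=\|\nabla\times f\|_{L^{2}}$ on divergence free fields, together with the splitting $\nabla\times f=\epsilon\,\nabla\times(\epsilon^{-1}f)+\epsilon^{-1}\nabla\epsilon\times f$, reduces the reverse inequality to absorbing $\int|\nabla\epsilon|^{2}|f|^{2}\,dx$. Under \eqref{eq:extra-decay} this is bounded by $\int\bra{x}^{-3-2\delta}|f|^{2}\,dx$, and splitting the domain into a large $|x|$ piece handled by Hardy (whose constant can be made small by the better decay) and a compact piece absorbed through a Fredholm/compactness argument---using that $H$ is injective on divergence free fields, a consequence of Propositions~\ref{pro:spectralass1}--\ref{pro:spectralass3}---yields \eqref{eq:eqpowH1}.

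For (iii), multiplying the pointwise identity by $\bra{x}^{-1/2-\delta}$ and applying \eqref{eq:assbpa} gives
\[
 \|\bra{x}^{-1/2-\delta}\Delta f\|_{L^{2}}\lesssim\|\bra{x}^{-1/2-\delta}Hf\|_{L^{2}}+\|\bra{x}^{-1/2-\delta}f\|_{L^{2}}+\|\bra{x}^{-3/2-2\delta}\nabla f\|_{L^{2}}.
\]
The gradient term is tamed by the weighted integration by parts
\[
 \int\bra{x}^{-2a}|\nabla f|^{2}\,dx = -\int\bra{x}^{-2a}\bar f\,\Delta f\,dx+\tfrac12\int\Delta(\bra{x}^{-2a})|f|^{2}\,dx,
\]
where $|\Delta\bra{x}^{-2a}|\lesssim\bra{x}^{-2a-2}$; Cauchy--Schwarz and Young's inequality absorb the resulting portion of $\|\bra{x}^{-1/2-\delta}\Delta f\|$ into the left side and deliver \eqref{eq:mah}. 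The reverse bound \eqref{eq:mah2} follows symmetrically from $Hf=(\epsilon\mu)^{-1}(-\Delta f-b(x,\partial)f)$.

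For (iv), the same scheme with weight $\bra{x}^{-1/2-\sigma}$ (admissible since $\sigma<\delta$) gives
\[
 \|\bra{x}^{-1/2-\sigma}\Delta f\|_{L^{2}}\lesssim\|\bra{x}^{-1/2-\sigma}Hf\|_{L^{2}}+\|\bra{x}^{-1/2-\sigma}f\|_{L^{2}},
\]
so the task reduces to absorbing the last term. My plan is a standard compactness/contradiction argument: if \eqref{eq:eqpowH2} failed there would exist divergence free $f_{n}\in\dot H^{2}$ with $\|\bra{x}^{-1/2-\sigma}f_{n}\|_{L^{2}}=1$ and $\|\bra{x}^{-1/2-\sigma}Hf_{n}\|_{L^{2}}\to 0$. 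From the display $\{\Delta f_{n}\}$ stays bounded in $L^{2}_{-1/2-\sigma}$, so $\{f_{n}\}$ is bounded in $H^{2}_{\mathrm{loc}}$; Rellich compactness and a diagonal extraction yield a subsequence converging locally in $L^{2}$ and weakly in $L^{2}_{-1/2-\sigma}$ to a divergence free $f_{\infty}$ with $Hf_{\infty}=0$ distributionally. Since $\Delta f_{\infty} = -b(x,\partial)f_{\infty}$ and $b$ decays, $\Delta f_{\infty}\in L^{2}_{1/2+}$, placing $f_{\infty}$ in the resonance class of Proposition~\ref{pro:spectralass3}, whose hypotheses are satisfied under \eqref{eq:extra-decay}; hence $f_{\infty}=0$. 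The hardest step will be ensuring that the normalization $\|\bra{x}^{-1/2-\sigma}f_{n}\|=1$ is preserved in the limit, i.e., weighted tightness preventing mass from escaping to infinity. This will come from combining the uniform $L^{2}_{-1/2-\sigma}$ bound on $\Delta f_{n}$ with a weighted Hardy inequality, which together with the local strong convergence already extracted rules out concentration at infinity and closes the contradiction.
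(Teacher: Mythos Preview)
Your arguments for (i) and (iii) match the paper's and are fine. For (ii) you take a different route: the paper instead proves the second-order bound $\|\Delta f\|_{L^{2}}\lesssim\|Hf\|_{L^{2}}$ by a contradiction argument essentially identical to the one for (iv), and then obtains $\|\nabla f\|\lesssim\|\sqrt H f\|$ by complex interpolation with the trivial endpoint. Your direct approach at the $\sqrt H$ level is plausible, but the ``compact piece'' step is sketchier than you indicate and ultimately needs the same non-resonance input as the paper's argument, so there is no real saving.

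The genuine gap is in (iv). You reduce to showing that the term $\|\bra{x}^{-1/2-\sigma}f\|$ can be absorbed, and then run a contradiction with the normalisation $\|\bra{x}^{-1/2-\sigma}f_{n}\|=1$. This is the wrong normalisation, and in fact the intermediate inequality you are implicitly trying to establish,
\[
  \|\bra{x}^{-\frac12-\sigma}f\|_{L^{2}}\lesssim\|\bra{x}^{-\frac12-\sigma}Hf\|_{L^{2}},
\]
need not hold: for $f\in\dot H^{2}$ the left side can be infinite (recall $|f|\lesssim\bra{x}^{1/2}$), and even in the constant-coefficient case $H=-\Delta$ the weight $\bra{x}^{-1/2-\sigma}$ is two powers too weak to bound $f$ by $\Delta f$. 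The correct contradiction assumes $\|\bra{x}^{-1/2-\sigma}\Delta f_{n}\|=1$ and $\|\bra{x}^{-1/2-\sigma}Hf_{n}\|\to0$, which is what the failure of \eqref{eq:eqpowH2} actually gives. The hard step is then to show the $H^{1}_{loc}$ limit $f$ is nonzero; this is where the paper uses a cutoff at scale $R$, the Allegretto--Rellich inequality $\||x|^{-a-2}u\|\lesssim\||x|^{-a}\Delta u\|$ (a second-order weighted Hardy bound, not the first-order one you invoke) to control the tail $\|\bra{x}^{-5/2-\sigma}(1-\chi_{R})f_{n}\|$, and then lets $R\to\infty$. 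Once $f\neq0$ is secured, showing $f=0$ from $Hf=0$ still requires a nontrivial bootstrap to place $f$ in the resonance class of Proposition~\ref{pro:spectralass3}: one must iterate Sobolev embeddings and the extra decay \eqref{eq:extra-decay} to upgrade $\Delta f\in L^{2}_{-1/2-\sigma}$ to $\Delta f\in L^{2}_{3/2+\delta}\cap L^{6}_{3/2+\delta}$. Your sketch does not address either of these points.
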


\begin{proof}
  \textsc{Proof of  \eqref{eq:eqpowH0}}.
  Integrating by parts we have
  \begin{align}\label{eq:form}% avvicina equazioni centrate
    \|\sqrt {H}f\|_{L^2}^2 &\eqsim 
    \|\sqrt {H}f\|_{\mathcal{H}}^2 = (Hf,f)_{\mathcal{H}}
    \textstyle
      = \int \tfrac1\mu \,|\nabla\times \tfrac1\epsilon f|^2 dx
    \lesssim \|\nabla \times f\|_{L^2}^2 + 
    \|\nabla\epsilon\times f\|_{L^2}^2
  \\
    &\lesssim \|\nabla f\|_{L^2}^2 
    + \|\bra{x}^{-1-\delta}f  \|_{L^2}^2
       \lesssim \|\nabla f\|_{L^2}^2 
       + \|f\|_{L^6}^2  \lesssim \|\nabla f\|_{L^2}^2.  \notag
  \end{align}
  This computation is valid for $f\in H^2\cap \mathcal{H}$, 
  and extends to $f\in \dot H^1\cap \mathcal{H}$ by approximation.

  \textsc{Proof of \eqref{eq:mah} and \eqref{eq:mah2}}. 
  Integration by parts yields
  (all norms are $L^{2}$)
  \begin{equation*}
    \|\bra{x}^{-s}\nabla f\|^{2}
    \le
    \|\bra{x}^{-s}\Delta f\|
    \|\bra{x}^{-s}f\|
    +
    2s\|\bra{x}^{-s}\nabla f\|
    \|\bra{x}^{-s-1}f\|   
  \end{equation*}
  for every $s>0$.
  By the Cauchy--Schwartz inequality and absorbing one term
  at the right, we obtain
  \begin{equation*}
    \|\bra{x}^{-s}\nabla f\|^{2}
    \le
    2\|\bra{x}^{-s}\Delta f\|
    \|\bra{x}^{-s}f\|
    +
    2s^{2}\|\bra{x}^{-s-1}f\|^{2}  
  \end{equation*}
  and then, for arbitrary $\rho>0$,
  \begin{equation*}
    \|\bra{x}^{-s}\nabla f\|
    \le
    \rho\|\bra{x}^{-s}\Delta f\|
    +C(\rho,s)\|\bra{x}^{-s}f\|.
  \end{equation*}
  Since $\nabla \cdot f=0$,
  we can write $Hf$ in the form
  \begin{equation*}
    \epsilon \mu Hf=\Delta f+ b_{1}(x)\cdot \nabla f+b_{0}(x)f
  \end{equation*}
  for suitable bounded matrices $b_{j}$.
  Taking $\rho$ small in the previous estimate, we deduce
  \begin{align*}
    \|\bra{x}^{-s}Hf\|_{L^{2}}
    &\gtrsim
    \|\bra{x}^{-s}\Delta f\|_{L^{2}}-
    \|\bra{x}^{-s}\nabla f\|_{L^{2}}-
    \|\bra{x}^{-s}f\|_{L^{2}}\\
    \textstyle
    &\gtrsim \|\bra{x}^{-s}\Delta f\|_{L^{2}}
    -C\|\bra{x}^{-s}f\|_{L^{2}}.
  \end{align*} 
  The proof of \eqref{eq:mah2} is similar.

  \textsc{Proof of \eqref{eq:eqpowH2}}.
  1) Assume by contradiction the existence of a sequence
  $(f_{n})$ such that
  $\nabla \cdot f_{n}=0$,
  $\|\bra{x}^{-1/2-\sigma}\Delta f_{n}\|_{L^{2}}=1$
  and $\|\bra{x}^{-1/2-\sigma}H f_{n}\|_{L^{2}}\to0$.
  (We may assume that $f_n$ is a Schwartz function
  vanishing at 0 together with its derivatives.)
  By compact embedding we can extract a subsequence
  (again denoted by $f_{n}$) which converges in
  $H^{1}_{loc}$ to a limit function $f$ such that
  $\nabla \cdot f=0$,
  $\|\bra{x}^{-1/2-\sigma}\Delta f\|_{L^{2}}\le1$
  and $\|\bra{x}^{-1/2-\sigma}H f\|_{L^{2}}=0$.

  We first prove that $f\neq0$. 
  Note that for this step it is enough to assume \eqref{eq:decayem0}.
 % $|\nabla \epsilon|+|\nabla \mu| \lesssim \bra{x}^{-1-\delta}$
 % and  $|D^{2}\epsilon| \lesssim \bra{x}^{-2-\delta}$.
  Recalling \eqref{eq:expbu}, for a sufficiently regular $v$ we have
  \begin{equation*}
    |\nabla \times \nabla \times v|\lesssim|Hv|+|b(x,\partial)v|,
    \quad
    |b(x,\partial)v|
    \lesssim
    (|\nabla \mu|^{2}+|\nabla \epsilon|^{2}+|D^{2}\epsilon|)|v|+
    (|\nabla \epsilon|+|\nabla \mu|)|\nabla v|.
  \end{equation*}
   The decay of the coefficients 
  thus implies (all norms are $L^{2}$)
  \begin{equation*}
    \|\bra{x}^{-s}\nabla \times \nabla \times v\|
    \lesssim
    \|\bra{x}^{-s}Hv\|
    +
    \|\bra{x}^{-s-1-\delta}\nabla v\|
    +
    \|\bra{x}^{-s-2-\delta}v\|.
  \end{equation*}
  for $s=\frac 12+\sigma$.
  As in the proof of \eqref{eq:mah},
  we integrate by parts and get
  \begin{equation*}
    \|\bra{x}^{-s-1-\delta}\nabla v\|^{2}
    \le
    \|\bra{x}^{-s-\delta}\Delta v\|
    \|\bra{x}^{-s-2-\delta} v\|
    +C(s,\delta)
    \|\bra{x}^{-s-1-\delta}\nabla v\|
    \|\bra{x}^{-s-2-\delta} v\|.
  \end{equation*}
Cauchy--Schwartz allows us to  absorb a term at the left and hence
  \begin{equation*}
    \|\bra{x}^{-s-1-\delta}\nabla v\|
    \le
    \rho\|\bra{x}^{-s-\delta}\Delta v\|
    +
    C(\rho)\|\bra{x}^{-s-2-\delta} v\|
  \end{equation*}
  where $\rho>0$ can be taken arbitrarily small.
  In conclusion we obtain
  \begin{equation*}
    \|\bra{x}^{-s}\nabla \times \nabla \times v\|
    \le
    \rho\|\bra{x}^{-s-\delta}\Delta v\|
    +
    C\|\bra{x}^{-s}Hv\|
    +
    C\|\bra{x}^{-s-2-\delta}v\|.
  \end{equation*}
  Now take $\chi_{R}(x)=\chi(x/R)$ as above
  and suppose $v=(1-\chi_{R})w$, with $w$ divergence free.
  Then we have
  \begin{equation*}
    \|\bra{x}^{-s}\Delta ((1-\chi_{R})w)\|
    \lesssim
    \|\bra{x}^{-s}\nabla \times \nabla \times((1-\chi_{R})w)\|
    +
    \||w|+|\nabla w|\|_{L^{2}(R\le|x|\le 2R)}.
  \end{equation*}
  We combine this inequality with the previous one. For
  sufficiently small $\rho$  we can absorb a term at the left
  and derive, for $v=(1-\chi_{R})w$ with $\nabla \cdot w=0$,
  \begin{equation}\label{eq:finDH}
    \|\bra{x}^{-s}\Delta v\|
    \lesssim
    C\|\bra{x}^{-s}Hv\|
    +
    C\|\bra{x}^{-s-2-\delta}v\|
    +
    \||w|+|\nabla w|\|_{L^{2}(R\le|x|\le 2R)}.
  \end{equation}

  Now we split
  \begin{equation*}
    1=\|\bra{x}^{-s}\Delta f_{n}\|
    \le 
    \|\bra{x}^{-s}\Delta(\chi_{R} f_{n})\|+
    \|\bra{x}^{-s}\Delta((1-\chi_{R}) f_{n})\|.
  \end{equation*}
For the first term we write
  \begin{equation*}
  \begin{split}
    \|\bra{x}^{-s}\Delta(\chi_{R} f_{n})\|
    &\le 
    \|\bra{x}^{-s}H(\chi_{R} f_{n})\|+
    \|\bra{x}^{-s}b(x,\partial)(\chi_{R} f_{n})\|
    \\
    &\lesssim 
    \|\bra{x}^{-s}H  f_{n}\|+
    \||f_{n}|+|\nabla f_{n}|\|_{L^{2}(|x|\le 2R)}.
    \end{split}
  \end{equation*}
For the second one we use \eqref{eq:finDH}
  with $w=f_{n}$. Summing up, we infer
  \begin{equation}\label{eq:beforeALR}
    1 \lesssim
    \|\bra{x}^{-s}H  f_{n}\|+
    \||f_{n}|+|\nabla f_{n}|\|_{L^{2}(|x|\le 2R)}+
    R^{-\delta}\|\bra{x}^{-s-2}(1-\chi_{R})f_{n}\|.
  \end{equation}
  We recall the Allegretto--Rellich inequality
  \begin{equation}\label{eq:ALR}
    \textstyle
    \||x|^{-a-2}u\|_{L^{2}}\lesssim\||x|^{-a}\Delta u\|_{L^{2}},
    \end{equation}
  if $\inf_{k\in\mathbb{N}_0} |4k(k+1)- (2a+3)(2a+1)|>0$
  which can be applied to functions that vanish in a neighborhood of 0
  and decay fast enough at $\infty$.
  (See Theorem~6.4.1 and Remark 6.4.2  in \cite{BalinskyEvansLewis15-a}.)
  Then the last term can be estimated by
  \begin{equation*}
    CR^{-\delta}
    \|\bra{x}^{-s}\Delta((1-\chi_{R})f_{n})\|_{L^{2}}
    \lesssim
    R^{-\delta}
    \|\bra{x}^{-s}\Delta f_{n}\|_{L^{2}}+
    R^{-\delta}
    \||f_{n}|+|\nabla f_{n}|\|_{L^{2}(|x|\le 2R)}.
  \end{equation*}
  Using $\|\bra{x}^{-s}\Delta f_{n}\|_{L^{2}}=1$ 
  and fixing a large $R$,   we arrive at
  \begin{align*}
    1 &\lesssim
    \|\bra{x}^{-s}H  f_{n}\|+
    \||f_{n}|+|\nabla f_{n}|\|_{L^{2}(|x|\le 2R)}+
    R^{-\delta},\\
     1 &\lesssim
    \|\bra{x}^{-s}H  f_{n}\|+
    \||f_{n}|+|\nabla f_{n}|\|_{L^{2}(|x|\le 2R)}.
  \end{align*}
  Since $\|\bra{x}^{-s}H  f_{n}\|\to0$ and $f_n\to f$ in $H^1_{loc}$ 
  as $n\to \infty$,   we conclude that $f\neq0$. 
  Moreover, $\Delta f$ belongs to $L^{2}_{-1/2-\sigma}$. 

 2)  We finally prove that $f=0$ and so deduce the required contradiction.
 % Indeed, since $\nabla \cdot f=0$ and $Hf=0$ we have
%    $\Delta f=-b(x,\partial)f=:F$ and
%    \begin{equation}\label{eq:estF}
%      |F|\lesssim \bra{x}^{-2-\delta}|f|
%      +\bra{x}^{-1-\delta}|\nabla f|
%    \end{equation}
%   just employing \eqref{eq:decayem0}. 
To this aim we use that   $\Delta f=-b(x,\partial)f=:F$ since $\nabla \cdot f=0$ 
and $Hf=0$. We want to proceed as in Proposition \ref{pro:spectralass3} for which we will
need to establish $F\in L^{2}_{3/2+\delta}$. The above argument shows that 
  the functions $(1-\chi_1)f_n$ are uniformly bounded in $L^{2}_{-5/2-\sigma}$, 
  and so  $f$ belongs to $L^{2}_{-5/2-\sigma}$. Interpolation yields
  $\nabla f\in L^{2}_{-3/2-\sigma}$. We now use  the additional decay
  \eqref{eq:extra-decay} of the coefficients to deduce  that
  \begin{equation}\label{eq:delta-sigma}
  \bra{x}^{\delta-\sigma}|\Delta f| =  \bra{x}^{\delta-\sigma}|F|
  \lesssim \bra{x}^{-\frac52-\sigma}|f| +  \bra{x}^{-\frac32-\sigma}|\nabla f| \ \in  L^2.
  \end{equation}
  We  take numbers $ \delta>\gamma'>\gamma>\sigma$, where we may assume 
  that $\delta\le\frac16$. Setting 
 \[\frac1{p_1}=\frac{\gamma'-\sigma}3 +\frac12<1,\]
  H\"older's inequality implies that $\Delta f \in L^{p_1}$. 
  From Sobolev's embedding we then infer
  \[ \nabla f\in  L^{q_1}, \qquad 
  \frac1{q_1}=\frac1{p_1} - \frac13 = \frac{\gamma'-\sigma}3 +\frac16.\]
  as well as 
  \[\bra{x}^{-\alpha} f\in L^\infty, \qquad \alpha=2-\frac3{p_1}= \frac12- \gamma'+\sigma>0.\]
The functions   $\bra{x}^{\gamma-\sigma-2}f$ and 
  $\bra{x}^{\gamma-\sigma-1}\nabla f$ are thus contained in $L^2$ 
  due to H\"older's inequality.
 Assumption \eqref{eq:extra-decay} then yields
 \[\bra{x}^{\delta+ \gamma-\sigma+\frac12} |\Delta f| 
 \lesssim  \bra{x}^{\gamma-\sigma-2}|f| +  \bra{x}^{\gamma-\sigma-1}|\nabla f| \ \in  L^2. \]
 
  As a result, $\Delta f$ is an element of  $L^{p_2}$ with 
  \[\frac1{p_2}=\frac{\gamma' + \gamma-\sigma+\frac12}3 +\frac12<1.\]
Employing Sobolev's embedding again, we obtain
  \[ \nabla f\in L^{q_2} \text{  \  with \ } 
    \frac1{q_2}= \frac{\gamma'+\gamma-\sigma}3 +\frac13\in \Big(\frac13,\frac12\Big), \qquad
      f\in L^{r_2} \text{  \  with \ }
	\frac1{r_2}= \frac{\gamma'+\gamma-\sigma}3.\]
Hence  $\bra{x}^{2\gamma-\sigma-\frac32}f\in L^2$ and 
  $\bra{x}^{2\gamma-\sigma-\frac12}\nabla f\in L^2$ so that
  $\bra{x}^{\delta +2\gamma-\sigma+1}\Delta f\in L^2$
  by  \eqref{eq:extra-decay}. 
  
 Repeating these steps, we gain another factor $\bra{x}^{\gamma+\frac12}$. 
  We thus obtain $f\in L^2_{-1}$, $\nabla f\in L^2$, and 
  $\Delta f=F\in L^{2}_{3/2+\delta}$, also interpolating with  $\nabla f\in L^{q_2}$.
  Moreover,  we  have $\nabla f\in L^6$ since $\Delta f\in L^2$, as well as
 \[|\nabla (\bra{x}^{-1} f)|\lesssim  \bra{x}^{-1} |\nabla f|+ \bra{x}^{-2} |f|  \ \in  L^2\]
 so that $\bra{x}^{-1} f\in L^6$. Using \eqref{eq:extra-decay} once more, we derive
 \[ \bra{x}^{\frac32 + \delta} |\Delta f| \lesssim  \bra{x}^{-1} |f| + |\nabla f|\ \in  L^6.\]
 The argument used in
  Proposition \ref{pro:spectralass3} now leads to $f=0$.

  \textsc{Proof of \eqref{eq:eqpowH1}}.
   The converse inequality is proved  by interpolation with the inequality 
   \begin{equation}\label{eq:Delta-H}
   \|\Delta f\|_{L^{2}}
      \lesssim  \|H f\|_{L^2}.
   \end{equation}
   for divergence free $f\in\dot{H}^2$. One shows \eqref{eq:Delta-H}
   by contradiction, assuming the existence of  $(f_{n})$ with
  $\nabla \cdot f_{n}=0$,   $\|\Delta f_{n}\|_{L^{2}}=1$
  and $\|H f_{n}\|_{L^{2}}\to0$. Here we may assume that $f_n$ is a 
  Schwartz function vanishing at 0 together with its derivatives.
  
   We proceed as the proof of \eqref{eq:eqpowH2} above.
   As in step 1)  of this proof we deduce that $f_n$ tends 
in $H^1_{loc}$  to a function  $f\neq 0$ with  $\nabla \cdot f=0$, $Hf=0$, and
$\|\Delta f\|_{L^{2}}\le1$. One only has to modify the last summand in \eqref{eq:beforeALR} 
with $s=0$ to
$R^{-\frac\delta2}\|\bra{x}^{-\frac\delta2-2}(1-\chi_{R})f_{n}\|$,
in order to use the Rellich-Allegretto inequality \eqref{eq:ALR} with $a=\delta/2$.
As in step 2) of the proof of \eqref{eq:eqpowH2}, one also sees that
$f$ belongs to $L^{2}_{-2-\sigma}$ and  $\nabla f$ to $L^{2}_{-1-\sigma}$ for some $\sigma>0$.
Just using assumption \eqref{eq:decayem0} we deduce \eqref{eq:delta-sigma}
and can proceed as before to conclude the contradiction $f=0$.

\end{proof}

We are now in position to apply Corollary \ref{cor:resestweaker},
combined with Propositions~\ref{pro:spectralass1}--\ref{pro:spectralass3}
concerning hypothesis (S).

\begin{proposition}[]\label{pro:estMaxw}
  Let $\epsilon,\mu:\mathbb{R}^3\to \mathbb{R}$ and
  $b(x,\partial)$ as in \eqref{eq:choiab}.
  For a $\delta>0$ assume that
  \begin{enumerate}
    \item 
    $\inf\epsilon \mu>0$
    and
    $(\epsilon \mu)'_{-}\le \frac 14 (1-2^{-\delta})^{-1}\epsilon \mu      
     \bra{x}^{-1-\delta}$,        
    \item 
    $|\nabla\epsilon|+|\nabla \mu|\lesssim \bra{x}^{-1-\delta}$
    and
    $|\epsilon-1|+|\mu-1|+|D^{2}\epsilon|+|D^{2}\mu|
      \lesssim \bra{x}^{-2-\delta}$,
      \item 
    either 0 is not a resonance (i.e., if $(\Delta+b)u=0$ with
    $u=\Delta^{-1}f$ for some 
    $\bra{x}^{\frac 12+\delta} f\in L^{2}$ then $u=0$) or we strengthen (2) by
    \begin{equation}\label{eq:decay-ep-mu}
        |\nabla \epsilon|+|\nabla \mu|\lesssim
        \bra{x}^{-\frac32-\delta}.
   %     \qquad  |D^{2}\epsilon| \lesssim \bra{x}^{-\frac52-\delta}.
    \end{equation}
  \end{enumerate}
  Then for any divergence free forcing term
  $F\in L^2L^2_{1/2+}$ we have
  \begin{equation}\label{eq:estMaxwinh}
    \textstyle
    \|\bra{x}^{-\frac 12-}
    \int_{0}^{t}\cos((t-s)\sqrt{H})F(s)ds\|_{L^{2}(dt)L^{2}}
    \lesssim
    \|\bra{x}^{1/2+}F\|_{L^{2}L^{2}},
  \end{equation}
  \begin{equation}\label{eq:estMaxwinhunt}
    \textstyle
    \|\bra{x}^{-\frac 12-}
    \int_{\mathbb{R}}\cos((t-s)\sqrt{H})F(s)ds\|_{L^{2}(dt)L^{2}}
    \lesssim
    \|\bra{x}^{1/2+}F\|_{L^{2}L^{2}}.
  \end{equation}
\end{proposition}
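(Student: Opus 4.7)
The plan is to recognize the operator on the left-hand side of \eqref{eq:estMaxwinh} as $\partial_t U$ for the solution $U$ of the Cauchy problem \eqref{eq:maxwHinhom} with zero initial data, then recast this problem in the form \eqref{eq:wavevarF} and apply Proposition \ref{pro:smooWE}. First I would observe that, because $\nabla\cdot F=0$ and $\nabla\cdot(HU) = \nabla\cdot(\nabla\times(\cdots))=0$, taking the divergence of \eqref{eq:maxwHinhom} shows that $\nabla\cdot U$ satisfies $\partial_t^2(\nabla\cdot U)=0$ with zero data, so $U$ remains divergence free for all $t$. This divergence constraint allows me to use $\nabla\times\nabla\times U = -\Delta U$; the identity preceding \eqref{eq:choiab} then gives $(a\partial_t^2 -\Delta -b(x,\partial))U = aF$ with $a=\epsilon\mu$ and $b$ as in \eqref{eq:choiab}. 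Since $a$ is bounded above, $\|aF\|_{L^2 L^2_{1/2+}} \lesssim \|\bra{x}^{1/2+}F\|_{L^2 L^2}$, so we are in the setting of \eqref{eq:wavevarF}.

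The next task is to verify the three hypotheses of Corollary \ref{cor:resestweaker} for this reformulation. Condition (1) there is identical to hypothesis (1) of the proposition applied to $a=\epsilon\mu$. Condition (2) follows from hypothesis (2) and the explicit form \eqref{eq:defbxD}, since $p=\nabla\log\epsilon$ and $q=\nabla\log\mu$ decay like $\bra{x}^{-1-\delta}$ and their derivatives like $\bra{x}^{-2-\delta}$, yielding the required pointwise bound on $b(x,\partial)v$. For the spectral assumption (S) I would chain together Propositions \ref{pro:spectralass1}, \ref{pro:spectralass2}, and \ref{pro:spectralass3}: the first excludes eigenvalues for $z\notin\mathbb{R}$; the second rules out embedded resonances at $z\in\mathbb{R}\setminus\{0\}$ under hypothesis (2), which matches the decay \eqref{eq:decayem}; and the remaining possibility of a resonance at $z=0$ is excluded either by the first alternative of hypothesis (3), which is precisely the statement of (S)(2) at $z=0$ (since $u=R_0(0)g$ means $u=\Delta^{-1}g$), or by the enhanced decay in the second alternative, which fits \eqref{eq:decayem1} and invokes Proposition \ref{pro:spectralass3}. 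Proposition \ref{pro:smooWE} then yields
\[
\|\partial_t U\|_{L^2(dt)L^2_{-1/2-}} \lesssim \|aF\|_{L^2 L^2_{1/2+}} \lesssim \|\bra{x}^{1/2+} F\|_{L^2 L^2},
\]
and Duhamel's formula \eqref{eq:duh1} identifies $\partial_t U(t) = \int_0^t \cos((t-s)\sqrt{H}) F(s)\,ds$, giving \eqref{eq:estMaxwinh}.

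For the untruncated bound \eqref{eq:estMaxwinhunt} I would use a time-translation followed by time-reversal argument. By the time-translation invariance of $\partial_t^2 + H$, applying the first bound to the shifted forcing $F(\cdot + t_0)$ yields
\[
\Big\|\int_{-t_0}^t \cos((t-s)\sqrt H) F(s)\,ds\Big\|_{L^2(dt) L^2_{-1/2-}} \lesssim \|F\|_{L^2 L^2_{1/2+}}
\]
with a constant independent of $t_0\in\mathbb{R}$. Letting $t_0\to +\infty$ and invoking Fatou's lemma, I obtain the same bound for the full-past retarded propagator $W_-(t) := \int_{-\infty}^t \cos((t-s)\sqrt{H}) F(s)\,ds$. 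The symmetric advanced propagator $W_+(t) := \int_t^{+\infty}\cos((t-s)\sqrt{H}) F(s)\,ds$ is then handled by time reversal $t\mapsto -t$, which preserves the cosine kernel because $\cos((t-s)\sqrt H)$ is even in $t-s$; this reduces the bound on $W_+$ to that on $W_-$ applied to the reflected forcing. Summing $W_-+W_+$ reconstructs $\int_{\mathbb R}\cos((t-s)\sqrt H)F(s)\,ds$ and yields \eqref{eq:estMaxwinhunt}.

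The main obstacle I anticipate is not the mechanics of the reduction but the verification of (S), since this is where the competing formulations of hypothesis (3) are relevant and must be carefully aligned with the statements of Propositions \ref{pro:spectralass2} and \ref{pro:spectralass3}; in particular one must check that the decay \eqref{eq:decayem} is strong enough for Proposition \ref{pro:spectralass2} but that the stronger \eqref{eq:decayem1} is genuinely needed only at $z=0$, so that the first alternative of (3) can replace it there. A secondary technical point, easily checked, is that $U$ satisfies the sub-exponential growth required by Proposition \ref{pro:smooWE}: selfadjointness and nonnegativity of $H$ on $\mathcal{H}$ ensure that $\cos(t\sqrt H)$ is bounded on $\mathcal{H}$, so the Duhamel integral grows at most linearly in $t$ when $F\in L^1_{\mathrm{loc}}L^2$.
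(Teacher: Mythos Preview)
Your proposal is correct and follows essentially the same approach as the paper: verify the hypotheses of Corollary~\ref{cor:resestweaker} via Propositions~\ref{pro:spectralass1}--\ref{pro:spectralass3}, rewrite \eqref{eq:maxwHinhom} as \eqref{eq:wavevarF} using the divergence constraint, apply Proposition~\ref{pro:smooWE}, identify $\partial_t U$ via Duhamel, and then pass to the untruncated integral by time translation and reversal. The only difference is in handling the regularity/growth hypothesis of Proposition~\ref{pro:smooWE}: the paper approximates $F$ by divergence-free $F_n\in C^1 L^2_{1/2+}$ with compact support in time and passes to the limit by density, whereas you argue directly from boundedness of $\cos(t\sqrt{H})$; your sketch handles the sub-exponential growth in $L^2$ but does not quite address the required $H^{2-k}$ regularity of $\partial_t^k U$, so the density argument is the cleaner route here.
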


\begin{proof}%[Proof of ...]
  We check the hypotheses in Corollary \ref{cor:resestweaker}
on the coefficients.
  Assumption (1) in the corollary follows from conditions
  (1) and (2) here, and assumption (2) in the corollary is an easy
  consequence of (2) here (compare with \eqref{eq:decayem}).
  The spectral assumption (S) reduces to (3) here in view of
  Propositions~\ref{pro:spectralass1}--\ref{pro:spectralass3}.
  
We can approximate $F$ in $L^2L^2_{1/2+}$ by divergence free functions 
$F_n\in C^1 L^2_{1/2+}$ with compact support in time. The corresponding
solutions $U_n$ to \eqref{eq:maxwHinhom} then satisfy the conditions
of Proposition~\ref{pro:smooWE}. By density we can thus assume that 
$U$ has the required regularity and growth.  Since for divergence free solutions
  the wave equations $\epsilon\mu U_{tt}=(\Delta+b)U+\epsilon\mu F$ and
  $U_{tt}=HU+F$ coincide,  we can apply estimate 
  \eqref{eq:smooU}. Hence the solution $U$ of 
  problem \eqref{eq:maxwHinhom} satisfies
  \begin{equation*}
    \|\bra{x}^{-\frac 32-}U\|_{L^{2}L^{2}}+
    \|\bra{x}^{-\frac 12-}\nabla U\|_{L^{2}L^{2}}+
    \|\bra{x}^{-\frac 12-}\partial_{t} U\|_{L^{2}L^{2}}
    \lesssim
    \|\bra{x}^{\frac 12+}F\|_{L^{2}L^{2}}.
  \end{equation*}
Because of \eqref{eq:duh1}, we obtain
  \begin{equation*}
    \textstyle
    \|\bra{x}^{-\frac 12-}
    \int_{0}^{t}\cos((t-s)\sqrt{H})F(s)ds\|_{L^{2}L^{2}}
    \lesssim
    \|\bra{x}^{\frac 12+}F\|_{L^{2}L^{2}}
  \end{equation*}
  i.e., \eqref{eq:estMaxwinh}.
  By time translation invariance, we can replace the
  integral $\int_{0}^{t}$ with $\int_{T}^{t}$
  for an arbitrary $T<0$, where the implicit constant
  does not dependent of $T$,  and hence with
  $\int_{-\infty}^{t}$ by letting $T\to-\infty$.
  By time reversal, the estimate is then valid also with
  the integral $\int_{-\infty}^{t}$ replaced by
  $\int_{t}^{+\infty}$. Summing the two, we arrive at
  \eqref{eq:estMaxwinhunt}.
\end{proof}

By a modification of the standard $TT^{*}$ method we obtain
the corresponding homogeneous estimates.

\begin{proposition}[]\label{pro:estMaxwhom}
  Under the assumptions of Proposition \ref{pro:estMaxw}
  we have for any divergence free data $f$ in the respective spaces
  \begin{align}\label{eq:estMaxwhom}
    \|\bra{x}^{-1/2-}e^{it \sqrt{H}}f\|_{L^{2}(dt)L^{2}}
    &\lesssim\|f\|_{L^{2}},
    \quad
    \|\bra{x}^{-1/2-}\partial_{t} e^{it \sqrt{H}}f\|_{L^{2}(dt)L^{2}}
    \lesssim\|f\|_{\dot H^{1}},\\
  \label{eq:estMaxwhomdx}
    \|\bra{x}^{-1/2-}\nabla e^{it \sqrt{H}}f\|
      _{L^{2}(dt)L^{2}}
    &\lesssim\|f\|_{H^{1}},
    \quad
    \|\bra{x}^{-1/2-}\Delta e^{it \sqrt{H}}f\|
      _{L^{2}(dt)L^{2}}
    \lesssim\|f\|_{H^{2}}.
  \end{align}
\end{proposition}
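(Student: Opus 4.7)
The plan is a modified $TT^{*}$ argument anchored on the inhomogeneous bound \eqref{eq:estMaxwinhunt}, combined with the Riesz-type inequalities of Lemma~\ref{lem:riesz} and the fact that $H$ preserves the divergence free subspace $\mathcal{H}$ of \eqref{eq:HS-D} (so $e^{it\sqrt{H}}$ commutes with $\sqrt H$ and $H$ on $\mathcal{H}$). Set $Y:=L^{2}_{t}L^{2}_{-1/2-}$ throughout.

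To prove the basic estimate $\|e^{it\sqrt{H}}f\|_{Y}\lesssim\|f\|_{L^{2}}$, I decompose $e^{it\sqrt{H}}f=\cos(t\sqrt H)f+i\sin(t\sqrt H)f$ and apply $TT^{*}$ to the operators $T_{c}f(t)=\cos(t\sqrt{H})f$ and $T_{s}f(t)=\sin(t\sqrt{H})f$. The product-to-sum identities yield
\[
T_{c}T_{c}^{*}G(t)=\tfrac12\!\int\!\bigl[\cos((t-s)\sqrt H)+\cos((t+s)\sqrt H)\bigr]G(s)\,ds
\]
and the analog with a minus sign for $T_{s}T_{s}^{*}$; both summands map $Y^{*}\to Y$ by \eqref{eq:estMaxwinhunt}, the $(t+s)$-piece via the time reflection $s\mapsto-s$. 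The time derivative estimate then follows from $\partial_{t}e^{it\sqrt H}f=i\,e^{it\sqrt H}\sqrt H f$ and $\sqrt{H}f\in\mathcal{H}$ by applying the basic estimate to $\sqrt H f$ and invoking Lemma~\ref{lem:riesz}(i) to convert $\|\sqrt H f\|_{L^{2}}\lesssim\|\nabla f\|_{L^{2}}$. For the Laplacian estimate, Lemma~\ref{lem:riesz}(iii) applied to $u(t)=e^{it\sqrt H}f$ gives
\[
\|\bra{x}^{-1/2-}\Delta u\|_{L^{2}_{x}}\lesssim\|\bra{x}^{-1/2-}Hu\|_{L^{2}_{x}}+\|\bra{x}^{-1/2-}u\|_{L^{2}_{x}};
\]
since $Hu(t)=e^{it\sqrt H}Hf$ with $Hf$ still in $\mathcal{H}$, applying the basic estimate to both $Hf$ and $f$ together with the pointwise bound $\|Hf\|_{L^{2}}\lesssim\|f\|_{H^{2}}$ (from boundedness of the coefficients of $H$) closes the argument.

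The gradient estimate $\|\bra{x}^{-1/2-}\nabla e^{it\sqrt H}f\|_{L^{2}L^{2}}\lesssim\|f\|_{H^{1}}$ is the subtle case, since $\nabla$ does not commute with $e^{it\sqrt H}$ and Lemma~\ref{lem:riesz} does not provide a weighted equivalence between $\nabla$ and $\sqrt H$. My approach is Stein complex interpolation on the analytic family
\[
T_{z}f(t,x):=\bra{x}^{-1/2-}(-\Delta)^{z/2}e^{it\sqrt{H}}f,\qquad 0\le\Re z\le 2,
\]
whose endpoints on $\Re z=0$ and $\Re z=2$ are bounded $L^{2}\to L^{2}L^{2}$ and $H^{2}\to L^{2}L^{2}$ by the basic estimate and the Laplacian estimate, respectively, provided the imaginary powers $(-\Delta)^{i\theta/2}$ are bounded on the weighted space $L^{2}(\bra{x}^{-1-2\sigma}dx)$ with admissible growth in $\theta$. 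This weighted boundedness is available because the weight $\bra{x}^{-1-2\sigma}$ is Muckenhoupt $A_{2}$ on $\mathbb{R}^{3}$ for small $\sigma$. Stein interpolation then yields $T_{1}:H^{1}\to L^{2}L^{2}$, and a final appeal to the classical Riesz transform boundedness on the same weighted $L^{2}$ space gives $\|\bra{x}^{-1/2-}\nabla v\|_{L^{2}}\eqsim\|\bra{x}^{-1/2-}(-\Delta)^{1/2}v\|_{L^{2}}$, converting the $T_{1}$-bound into the gradient estimate. The principal technical obstacle is precisely this passage through weighted harmonic analysis; once the $A_{2}$ boundedness of the imaginary powers is established, the identification of $\nabla$ with $(-\Delta)^{1/2}$ and the interpolation steps are routine.
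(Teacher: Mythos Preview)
Your proof is correct and follows essentially the same route as the paper: the $TT^{*}$ argument via product-to-sum identities and \eqref{eq:estMaxwinhunt} for the basic estimate, Lemma~\ref{lem:riesz}(i) for the time derivative, and Lemma~\ref{lem:riesz}(iii) for the Laplacian are exactly what the paper does. For the gradient estimate the paper simply invokes ``complex interpolation of weighted Sobolev spaces'' between the $L^{2}$ and $H^{2}$ endpoints; your explicit Stein interpolation with the analytic family $T_{z}$ and the $A_{2}$ boundedness of imaginary powers and Riesz transforms is a correct and more detailed implementation of that same step.
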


\begin{proof}%[Proof of ...]
 1)  By the change of variable $s\to -s$,
  estimate \eqref{eq:estMaxwinhunt} implies
  \begin{equation*}
    \textstyle
    \|\bra{x}^{-1/2-}
    \int_{\mathbb{R}}\cos((t+s)\sqrt{H})F(s)ds\|_{L^{2}(dt)L^{2}}
    \lesssim
    \|\bra{x}^{1/2+}F\|_{L^{2}L^{2}}.
  \end{equation*}
  Summing the two inequalities, we get
  \begin{equation}\label{eq:coscos}
    \textstyle
    \|\bra{x}^{-1/2-}
    \int_{\mathbb{R}}
    \cos(t\sqrt{H}) \cos(s\sqrt{H})
    F(s)ds\|_{L^{2}(dt)L^{2}}
    \lesssim
    \|\bra{x}^{1/2+}F\|_{L^{2}L^{2}}.
  \end{equation}
  By subtraction we obtain this estimate with 
  $\sin(t\sqrt{H}) \sin(s\sqrt{H})$ instead of
  $\cos(t\sqrt{H}) \cos(s\sqrt{H})$.
  
To exploit the above bounds, we consider the duality
  \begin{equation*}
    ((F,G))=
    \textstyle
    \int_{\mathbb{R}}(F(t),G(t))_{\mathcal{H}}dt=
    \int_{\mathbb{R}}\int_{\mathbb{R}^{3}}
    F(t,x) \overline{G}(t,x)\epsilon^{-1}dt
  \end{equation*}
  and the weighted space $Z^*$ of divergence free $F$ with finite norm
  \begin{equation*}
    \|F\|_{Z^*}=\|\bra{x}^{1/2+}F\|_{L^{2}L^{2}}.
  \end{equation*}
  Define $(Tf)(t)=\cos(t \sqrt{H})f$ for $f\in \mathcal{H}$ 
  and $t\in \mathbb{R}$, as well as 
  $T^*F=\int_{\mathbb{R}}\cos(s \sqrt{H}) F(s)\,ds\in  \mathcal{H}$
  at first for $F\in Z^*$ with compact support in time.
   (Recall $H$ is selfadjoint for the $\epsilon$--product.) We obtain
   \begin{equation*}
    TT^{*}F=
    \textstyle
    \int \cos(t\sqrt{H}) \cos(s\sqrt{H})F(s)ds,
  \end{equation*} 
   as well as $((F, Tf))=(T^*F,f)_\mathcal{H}$ and 
   $((TT^*F,G))= (T^*F,T^*G)_\mathcal{H}$ for such $f$, $F$ and $G$.
 Estimate \eqref{eq:coscos} yields
 \begin{equation*}
    |(( T^*F, T^*G))|= |((\bra{x}^{-1/2-} TT^{*}F,\bra{x}^{1/2+}G))|\lesssim\|F\|_{Z^*}\|G\|_{Z^*}.
  \end{equation*}
   Taking $F=G$ we deduce
  \begin{equation*}
    \textstyle
    \|T^{*}F\|^{2}_{\mathcal{H}}\lesssim\|F\|_{Z^*}^2=
    \|\bra{x}^{1/2+}F\|_{L^{2}L^{2}}^{2}.
  \end{equation*}
  By density,  the operator $T^{*}:Z^*\to \mathcal{H}$ is bounded.
  Duality implies 
  \begin{align*}
     %&=\|\bra{x}^{-1/2-}Tf\|_{L^{2}L^{2}}
   \|\bra{x}^{-1/2-}\cos(t \sqrt{H})f\|_{L^{2}L^{2}}
   =\|Tf\|_{Z} =\sup_{\|F\|_{Z^*}\le1} |((F, Tf))| 
   = \sup_{\|F\|_{Z^*}\le1}(T^*F,f)_\mathcal{H}
    \lesssim\|f\|_{\mathcal{H}} 
  \end{align*}
   where the suprema are taken over $F$ with compact support in time.
    A similar argument gives
  \begin{equation*}
    \|\bra{x}^{-1/2-}\sin(t \sqrt{H})f\|_{L^{2}L^{2}}
    \lesssim\|f\|_{\mathcal{H}}.
\end{equation*}
Combining the two estimates we get the first of \eqref{eq:estMaxwhom}.
  By the estimate already proved and \eqref{eq:eqpowH0}
  we have
  \begin{equation*}
    \|\bra{x}^{-1/2-}\partial_{t}e^{it \sqrt{H}}f\|_{L^{2}L^{2}}
    =
    \|\bra{x}^{-1/2-}e^{it \sqrt{H}}\sqrt{H}  f\|_{L^{2}L^{2}}
    \lesssim\|\sqrt{H} f\|_{L^2}
    \lesssim\|\nabla f\|_{L^{2}}
  \end{equation*}
  and this concludes the proof of \eqref{eq:estMaxwhom}.

  2) Applying \eqref{eq:mah} to $u=e^{it \sqrt{H}}f$ 
  and using the first inequality in \eqref{eq:estMaxwhom} we have
  \begin{equation*}
  \begin{split}
    \|\bra{x}^{-1/2-}\Delta u\|_{L^{2} L^{2}}
    \lesssim &
    \|\bra{x}^{-1/2-}H u\|_{L^{2}L^{2}}
    +
    \|\bra{x}^{-1/2-} u\|_{L^{2}L^{2}}
    \\
    =&\|\bra{x}^{-1/2-}e^{it \sqrt{H}}Hf\|_{L^{2}L^{2}}
    +\|\bra{x}^{-1/2-}e^{it \sqrt{H}}f\|_{L^{2}L^{2}}
    \\
    \lesssim & \|Hf\|_{L^{2}}+\|f\|_{L^{2}}
    \end{split}
  \end{equation*}
  and by \eqref{eq:mah2} we obtain
  the second part of \eqref{eq:estMaxwhomdx}.
  The first estimate in \eqref{eq:estMaxwhomdx} then follows
  by complex interpolation of weighted Sobolev spaces
  with the first inequality in \eqref{eq:estMaxwhom}.
\end{proof}

Under more restrictive decay conditions on the coefficients,
we can prove a variant of \eqref{eq:estMaxwhomdx} 
in homogeneous norms that is needed below.

\begin{proposition}[]\label{pro:homhom}
  Let $\epsilon,\mu:\mathbb{R}^3\to \mathbb{R}$ and
  $b(x,\partial)$ as in \eqref{eq:choiab}.
  For some $\delta>0$ assume that
  \begin{enumerate}
    \item 
    $\inf\epsilon \mu>0$
   \  and \
    $(\epsilon \mu)'_{-}\le \frac 14 (1-2^{-\delta})^{-1}\epsilon \mu
     \bra{x}^{-1-\delta}$,   
    \item 
    $|\epsilon-1|+|\mu-1|+|D^{2}\mu|\lesssim \bra{x}^{-2-\delta}$, \
    $|\nabla\epsilon|+|\nabla \mu|\lesssim 
        \bra{x}^{-\frac32-\delta}$,
   \ and \
    $|D^{2}\epsilon|
      \lesssim \bra{x}^{-\frac52-\delta}$.
  \end{enumerate}
  Let $f$  be divergence free.
  Then in addition to \eqref{eq:estMaxwhom} and
  \eqref{eq:estMaxwhomdx},  we have the estimates
  \begin{equation}\label{eq:homhom}
    \|\bra{x}^{-1/2-}\nabla e^{it \sqrt{H}}f\|
      _{L^{2}(dt)L^{2}}
    \lesssim\|\nabla f\|_{L^2},
    \quad
    \|\bra{x}^{-1/2-}\Delta e^{it \sqrt{H}}f\|
      _{L^{2}(dt)L^{2}}
    \lesssim\|Hf\|_{L^{2}},
  \end{equation}
\end{proposition}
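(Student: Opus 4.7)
The plan is to prove the second (Laplacian) estimate of \eqref{eq:homhom} first by a pointwise-in-$t$ application of Lemma \ref{lem:riesz}, and then to deduce the first (gradient) estimate by complex interpolation, using the homogeneous equivalence \eqref{eq:eqpowH1} to convert fractional powers of $H$ into derivatives.

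\emph{Step 1 (the $\Delta$-estimate).} Set $u(t,\cdot)=e^{it\sqrt{H}}f$. Since $H$ preserves $\mathcal{H}$ under the spectral calculus, $u(t,\cdot)$ is divergence-free for every $t$. The decay hypothesis (2) of the proposition coincides with the extra-decay condition \eqref{eq:extra-decay} in Lemma \ref{lem:riesz}, so \eqref{eq:eqpowH2} applies pointwise in $t$: for any $\sigma\in(0,\delta)$,
\[\|\bra{x}^{-1/2-\sigma}\Delta u(t,\cdot)\|_{L^{2}_{x}}\lesssim \|\bra{x}^{-1/2-\sigma}Hu(t,\cdot)\|_{L^{2}_{x}}.\]
By the functional calculus $Hu(t,\cdot)=e^{it\sqrt{H}}(Hf)$, with $Hf\in\mathcal{H}$ still divergence-free. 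Applying the first inequality of \eqref{eq:estMaxwhom} to the datum $Hf$ gives $\|\bra{x}^{-1/2-\sigma}e^{it\sqrt{H}}(Hf)\|_{L^{2}L^{2}}\lesssim \|Hf\|_{L^{2}}$, and integrating the previous pointwise bound in $t$ yields the second estimate.

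\emph{Step 2 (the $\nabla$-estimate via interpolation).} View $Tf=e^{it\sqrt{H}}f$ as a linear map on divergence-free data. Two endpoint bounds are now at hand,
\[\|\bra{x}^{-1/2-\sigma}Tf\|_{L^{2}L^{2}}\lesssim \|f\|_{L^{2}},\qquad \|\bra{x}^{-1/2-\sigma}\Delta Tf\|_{L^{2}L^{2}}\lesssim \|Hf\|_{L^{2}},\]
from \eqref{eq:estMaxwhom} and Step 1 respectively, and both are purely homogeneous. Complex interpolation at $\theta=1/2$ -- on the codomain using the scale of weighted homogeneous Sobolev spaces $L^{2}_{t}\dot{H}^{s}_{x}(\bra{x}^{-1-2\sigma}\,dx)$ (the weight is Muckenhoupt $A_{2}$), and on the domain using the scale $D(H^{s/2})\cap\mathcal{H}$ with homogeneous norm $\|H^{s/2}f\|_{L^{2}}$ furnished by the spectral theorem for the non-negative self-adjoint operator $H$ -- yields
\[\|\bra{x}^{-1/2-\sigma}\nabla Tf\|_{L^{2}L^{2}}\lesssim \|H^{1/2}f\|_{L^{2}}.\]
The equivalence \eqref{eq:eqpowH1}, available under precisely the decay of hypothesis (2), replaces $\|H^{1/2}f\|_{L^{2}}$ by $\|\nabla f\|_{L^{2}}$, finishing the first estimate.

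\emph{Main obstacle.} The delicate step is ensuring that the interpolation is \emph{purely homogeneous}: no spurious term $\|f\|_{L^{2}}$ may appear on the right-hand side, which is precisely what distinguishes \eqref{eq:homhom} from the weaker bounds \eqref{eq:estMaxwhomdx} proved using \eqref{eq:mah}--\eqref{eq:mah2}. This hinges on two facts applied in the homogeneous category: the codomain scale interpolates correctly because $\bra{x}^{-1-2\sigma}\in A_{2}$, and the domain scale $D(H^{s/2})$ interpolates correctly via the functional calculus of the self-adjoint operator $H$. The role of hypothesis (2) -- matching exactly the extra-decay condition \eqref{eq:extra-decay} -- is to make both endpoints homogeneous in the first place, through \eqref{eq:eqpowH1} and \eqref{eq:eqpowH2}.
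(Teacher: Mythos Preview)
Your proof is correct and follows essentially the same route as the paper: first apply \eqref{eq:eqpowH2} pointwise in $t$ and combine with \eqref{eq:estMaxwhom} to get the $\Delta$-estimate, then complex-interpolate with \eqref{eq:estMaxwhom} to obtain the $\nabla$-estimate with $\|\sqrt{H}f\|_{L^2}$ on the right, and finally convert to $\|\nabla f\|_{L^2}$. One small remark: for that last conversion only the one-sided bound \eqref{eq:eqpowH0} is needed, not the full equivalence \eqref{eq:eqpowH1}; the paper invokes \eqref{eq:eqpowH0}, though of course \eqref{eq:eqpowH1} is valid under hypothesis (2) and your citation is not wrong.
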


\begin{proof}%[Proof of ...]
  Note that under these assumptions, the spectral condition
  (S) is satisfied due to Propositions~\ref{pro:spectralass1}--\ref{pro:spectralass3}.
  By \eqref{eq:eqpowH2} in Lemma \ref{lem:riesz}
  combined with \eqref{eq:estMaxwhom} we have
  \begin{equation*}
    \|\bra{x}^{-\frac 12-}\Delta e^{it \sqrt{H}}f\|_{L^{2}L^{2}}
    \lesssim
    \|\bra{x}^{-\frac 12-}H e^{it \sqrt{H}}f\|_{L^{2}L^{2}}=
    \|\bra{x}^{-\frac 12-} e^{it \sqrt{H}}Hf\|_{L^{2}L^{2}}
    \lesssim\|Hf\|_{L^{2}}.
  \end{equation*}
  This proves the second estimate in \eqref{eq:homhom}.
  Complex interpolation with  \eqref{eq:estMaxwhom} then yields
  \begin{equation*}
    \|\bra{x}^{-\frac 12-}\nabla e^{it \sqrt{H}}f\|_{L^{2}L^{2}}
    \lesssim
    \|\sqrt{H}f\|_{L^{2}},
  \end{equation*}
  and recalling \eqref{eq:eqpowH0} we obtain also the first
  estimate.
\end{proof}

\section{Strichartz estimates}\label{sec:stri}

We first deduce from the results in
\cite{MetcalfeTataru12} a conditional Strichartz
estimate for the wave equation 
\begin{equation}\label{eq:wavevarcompl}
  (a \partial_{t}^{2}-\Delta-b(x,\partial))U=F,
  \qquad
  U(0,\cdot)=U_{0},
  \qquad
  \partial_{t}U(0,\cdot)=U_{1}.
\end{equation}
Recall that a couple $(p,q)\in[2,\infty]^{2}$ is
\emph{wave admissible} in dimension $n=3$ if
\begin{equation*}
  \textstyle
  \frac 1p+\frac 1q=\frac 12,
  \qquad
  p\in[2,\infty],
  \qquad
  q\in[2,\infty).
\end{equation*}
We often use that multiplication by $\epsilon$, $\epsilon^{-1}$, 
$\mu$ or $\mu^{-1}$ is continuous on the Strichartz spaces, 
as shown in the next lemma.
\begin{lemma}\label{lem:mult}
Let $m\in W^{1,\infty}$ be positive with $\frac1m\in L^\infty$ and 
$|\nabla m|\lesssim\bra{x}^{-1-}$ and let $(p,q)$ be wave admissible. 
Then the operator $f\mapsto mf$ is bounded on spaces
$\dot{H}^{2/p}_{q'}$, $\dot{H}^{-2/p}_{q}$ and $\dot{H}^{1-2/p}_{q}$.
In addition, assume that  $|D^2 m|\lesssim\bra{x}^{-2-}$. Then $f\mapsto mf$
is also bounded on $\dot{H}^{1+2/p}_{q'}$.
\end{lemma}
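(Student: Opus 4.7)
My plan is to reduce the four mapping properties to a short list of endpoint estimates via complex interpolation and duality, then verify each endpoint using the Leibniz rule together with weighted Hardy--type inequalities.

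Setting $s=2/p\in[0,1)$, the four target spaces are obtained by complex interpolation:
\[
\dot H^{s}_{q'}=[L^{q'},\dot H^1_{q'}]_s,\quad
\dot H^{-s}_q=[L^q,\dot H^{-1}_q]_s,\quad
\dot H^{1-s}_q=[L^q,\dot H^1_q]_{1-s},\quad
\dot H^{1+s}_{q'}=[\dot H^1_{q'},\dot H^2_{q'}]_s.
\]
Since $m$ is real-valued, multiplication by $m$ is selfadjoint for the $L^2$ pairing, so $\dot H^{-1}_q$-boundedness is dual to $\dot H^1_{q'}$-boundedness. It therefore suffices to prove boundedness of $f\mapsto mf$ on $L^r$ and $\dot H^1_r$ for $r\in(1,\infty)$, and -- under the additional $D^2m$ hypothesis -- on $\dot H^2_{q'}$ for $q'\in(1,2]$.

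The $L^r$ bound is immediate from $m\in L^\infty$. For $\dot H^1_r$, Leibniz yields $\nabla(mf)=m\nabla f+f\nabla m$; the first term is controlled by $\|m\|_\infty\|\nabla f\|_{L^r}$, while the second, using $|\nabla m|\lesssim\bra{x}^{-1-\delta}$, reduces to the weighted Hardy estimate
\begin{equation}\label{eq:hardy-sketch}
\|\bra{x}^{-1-\delta}f\|_{L^r}\lesssim \|\nabla f\|_{L^r},\qquad 1<r<\infty.
\end{equation}
For $r<3$, \eqref{eq:hardy-sketch} follows at once from Sobolev $\dot H^1_r\hookrightarrow L^{r^*}$ (with $1/r^*=1/r-1/3$) and H\"older against $\bra{x}^{-1-\delta}\in L^3$. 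For $r\ge 3$, writing $f=I_1((-\Delta)^{1/2}f)$ with $\|(-\Delta)^{1/2}f\|_{L^r}\eqsim \|\nabla f\|_{L^r}$ by the $L^r$-boundedness of Riesz transforms and invoking a Stein--Weiss weighted bound for the Riesz potential $I_1$ (by duality against $L^{r'}$), one still obtains \eqref{eq:hardy-sketch}; the strict positivity of $\delta$ is what restores the correct Stein--Weiss scaling. For $\dot H^2_{q'}$ the Leibniz formula
\[D^2(mf)=m\,D^2f+2\,\nabla m\otimes\nabla f+f\,D^2m,\]
combined with \eqref{eq:hardy-sketch} applied to $\nabla f$ and the analogous Rellich-type inequality
\[\|\bra{x}^{-2-\delta}f\|_{L^{q'}}\lesssim\|D^2f\|_{L^{q'}},\qquad q'\in(1,2],\]
(proved identically with $I_2$ in place of $I_1$) yields the claim.

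The main obstacle is exactly the weighted Hardy (resp. Rellich) inequality above the Sobolev threshold $r\ge 3$ (resp. $q'\ge 3/2$), where the unweighted versions fail and the strict decay $\delta>0$ must be exploited through Stein--Weiss. One further point of care is that every interpolated target exponent satisfies $|\cdot|<n/r$ for all wave-admissible $(p,q)$, so the target spaces are genuine function spaces and no subtlety arising from quotienting by polynomials actually enters the argument; all estimates can be verified on Schwartz functions and extended by density.
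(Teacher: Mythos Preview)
Your reduction to integer endpoints via complex interpolation breaks down: the endpoint boundedness you need on $\dot H^1_q$ (for the space $\dot H^{1-2/p}_q$) is simply false when $q>3$, and likewise on $\dot H^2_{q'}$ when $q'>3/2$. Concretely, the weighted Hardy inequality \eqref{eq:hardy-sketch} that you rely on fails for $r>3$: take $f_R=\chi(\cdot/R)$ with $\chi$ a standard bump; then $\|\nabla f_R\|_{L^r}\eqsim R^{3/r-1}\to 0$, while $\|\bra{x}^{-1-\delta}f_R\|_{L^r}\to\|\bra{x}^{-1-\delta}\|_{L^r}>0$ since $(1+\delta)r>3$. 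The same test functions kill your Rellich-type inequality for $q'>3/2$. No Stein--Weiss argument can rescue this, since the obstruction is the constant function and the issue is precisely that for $r\ge 3$ the homogeneous space $\dot H^1_r$ is only defined modulo constants---a subtlety you acknowledge for the \emph{target} spaces but not for your interpolation \emph{endpoints}. Since wave-admissible pairs allow $q$ arbitrarily large (and $q'$ up to $2$), these bad endpoints are unavoidable in your scheme.

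The paper sidesteps this by never passing to integer endpoints. It works directly at the fractional order via a Kato--Ponce type Leibniz estimate: for $\dot H^{2/p}_{q'}$ one writes
\[
\||D|^{2/p}(mf)\|_{L^{q'}}\lesssim \|m\|_{L^\infty}\||D|^{2/p}f\|_{L^{q'}}+\||D|^{2/p}m\|_{L^{3p/2}}\|f\|_{L^\kappa},
\]
and closes using the interpolated pointwise bound $||D|^{2/p}m|\lesssim\bra{x}^{-2/p-}$ together with the Sobolev embedding $\dot H^{2/p}_{q'}\hookrightarrow L^\kappa$, $\tfrac1\kappa=\tfrac1{q'}-\tfrac{2}{3p}$. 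The analogous argument with $|D|^{1-2/p}$ and the embedding $\dot H^{1-2/p}_q\hookrightarrow L^{3q}$ handles $\dot H^{1-2/p}_q$, and $\dot H^{1+2/p}_{q'}$ is obtained by applying the first step to $m\nabla f$ and controlling $|D|^{2/p}(\nabla m\,f)$ by H\"older plus Sobolev. All the exponents that appear in this direct approach stay in the range $|s|<3/r$, so the homogeneous spaces cause no trouble.
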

\begin{proof}
Observe  that   $|\,|D|^{2/p}m|\lesssim\langle x\rangle^{-2/p -}$ 
  by interpolation so that  $|D|^{2/p}m$  belongs to $L^{3p/2}$. 
  Let $\frac1\kappa= \frac1{q'}-\frac2{3p} \in[ \tfrac12,\tfrac{1}{q'}]$.  
  Sobolev's inequality yields $\dot H^{2/p}_{q'}\hookrightarrow L^\kappa$. 
  We thus obtain
  \[\|\,|D|^{2/p}(m f)\|_{L^{q'}}
    \lesssim \|m\|_{L^\infty}\,\|\,|D|^{2/p}f\|_{L^{q'}} 
     + \||D|^{2/p}m\|_{L^{3p/2}}\,\|f\|_{L^\kappa}
    \lesssim \|f\|_{\dot H^{2/p}_{q'}}\,.\]

    Duality then implies the boundedness on $\dot{H}^{-2/p}_{q}$.
We further have $|\,|D|^{1-2/p}m|\lesssim \langle x\rangle^{-2/q -}$ 
and  $\dot H^{1-2/p}_{q}\hookrightarrow L^{3q}$. As above, 
one now shows that $f\mapsto mf$ is continuous on $\dot H^{1-2/p}_{q}$.

For the last claim, let $f\in \dot{H}^{1+2/p}_{q'}$. The first
step allows us to bound $ |D|^{2/p}(m\nabla f)$ in $L^{q'}$.
For the term $ |D|^{2/p}(\nabla m f)$, we note that
$\dot{H}^{1+2/p}_{q'}\hookrightarrow L^{3q'}$ since 
$1+\frac2p-\frac3{q'}= -\frac1{q'}$ and that 
$|D|^{2/p}\nabla m\in L^{3q'/2}$ since $(1+\frac2p)\frac{3q'}2=3$.
Hence  $|D|^{2/p}\nabla m f$ belongs to $L^{q'}$ by H\"older's inequality.
The remaining summand $\nabla m |D|^{2/p} f$ is contained in $L^{q'}$ 
because $\nabla m\in L^3$ and $\dot{H}^{1}_{q'}\hookrightarrow L^{\frac{3q'}{3-q'}}$.
\end{proof}

\begin{proposition}[]\label{pro:strichcond}
  Assume that the coefficients $a(x)>0$ and $b(x,\partial)$ satisfy 
  \begin{enumerate}
    \item 
    $\bra{x}^{2+\delta}|D^{2}a|+\bra{x}^{1+\delta}|Da|+
    \bra{x}^{\delta}|a-1|\in L^{\infty}$,
    \item 
    $|b(x,\partial)v|\lesssim
    \bra{x}^{-2-\delta}|v|+\bra{x}^{-1-\delta}|\nabla v|$
  \end{enumerate}
  for some $\delta>0$. Let $(p,q)$ and $(r,s)$ be wave admissible. 
  Then there exists $R_{0}>0$ such that, for any
  $R\ge R_{0}$ and any solution 
  $U$ of problem \eqref{eq:wavevarcompl},
  we have the estimate
  \begin{equation}\label{eq:condstrich}
    \||D|^{-\frac 2p}D_{t,x}U\| _{L^{p}L^{q}}
    \lesssim
    \|D_{t,x}U(0)\|_{L^{2}}+ \||D|^{\frac2r} F \|_{L^{r'}L^{s'}}+
    \|D_{t,x}U\|_{L^{2}L^{2}(|x|\le R+1)}
  \end{equation}
  with an implicit constant depending on $R$.
\end{proposition}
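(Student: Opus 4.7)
Following the Burq strategy \cite{Burq03-a}, I would localize $U$ to the far region by means of a smooth radial cutoff $\chi$ with $\chi\equiv 0$ on $|x|\le R$ and $\chi\equiv 1$ on $|x|\ge R+1$ (where $R\ge R_0$ is to be chosen large), and apply the global Strichartz estimate of \cite{MetcalfeTataru12} to $V:=\chi U$. The function $V$ satisfies
\[
  (a\partial_t^2-\Delta-b(x,\partial))V=\chi F+G,\qquad G:=-[\Delta+b(x,\partial),\chi]U,
\]
with $G$ supported in the annulus $\{R\le|x|\le R+1\}$ and $|G|\lesssim_R|U|+|\nabla U|$ there. Hypotheses (1) and (2) make $a-1$ and its first two derivatives, and the coefficients of $b$, of order $R^{-\delta}$ on $\{|x|\ge R\}$; since $V$ vanishes on $\{|x|<R\}$, modifying $a,b$ inside this ball produces extensions $\tilde a,\tilde b$ satisfying the global smallness hypotheses of \cite{MetcalfeTataru12} without altering the equation for $V$.

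\emph{Strichartz estimate for $V$.} The Metcalfe--Tataru global Strichartz estimate then delivers, for any wave admissible $(p,q)$ and $(r,s)$,
\[
  \||D|^{-2/p}D_{t,x}V\|_{L^pL^q}\lesssim\|D_{t,x}V(0)\|_{L^2}+\||D|^{2/r}(\chi F)\|_{L^{r'}L^{s'}}+\|\bra{x}^{1/2+}G\|_{L^2L^2},
\]
exploiting that their estimate accommodates either a Strichartz-dual or a weighted $L^2L^2$ (local-energy-dual) norm on the forcing. The initial-data term is bounded by $\|D_{t,x}U(0)\|_{L^2}$; the $\chi F$ contribution by $\||D|^{2/r}F\|_{L^{r'}L^{s'}}$ via a multiplier argument in the spirit of Lemma~\ref{lem:mult}; and the commutator contribution by $\|D_{t,x}U\|_{L^2L^2(|x|\le R+1)}$ by the compact support and the pointwise bound on $G$.

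\emph{Passing to $U$.} Decomposing $U=V+(1-\chi)U$, the interior remainder is supported in $\{|x|\le R+1\}$. Using Sobolev embedding on the support together with the polynomial decay of the Riesz kernel at infinity, $|D|^{-2/p}$ maps its time-slices into $L^q$ with a constant depending on $R$. The global $L^pL^q$ bound with right-hand side $\|D_{t,x}U\|_{L^2L^2(|x|\le R+1)}$ then follows by slicing time into unit intervals, applying the local-in-time Strichartz of \cite{MetcalfeTataru12} (valid without smallness) on each, and summing via the embedding $\ell^2\hookrightarrow\ell^p$ valid for $p\ge 2$. The principal obstacle is tracking the commutator and interior contributions in the exact norm $\|D_{t,x}U\|_{L^2L^2(|x|\le R+1)}$---not a weaker $L^pL^2$ variant---which is made possible precisely by the local-energy-dual norm on the forcing side of the MT12 estimate.
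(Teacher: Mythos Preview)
Your strategy matches the paper's: spatial cutoff, global Metcalfe--Tataru \cite{MetcalfeTataru12} for the far piece, local-in-time Metcalfe--Tataru plus Burq time-slicing and $\ell^{2}\hookrightarrow\ell^{p}$ summation for the near piece. Two points deserve tightening.

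First, the Sobolev/Riesz-kernel remark for the interior piece only converts $L^{2}_{x}$ to $L^{q}_{x}$ at fixed time; it does not turn an $L^{p}_{t}$ norm into an $L^{2}_{t}$ norm. The route that actually closes (and the one the paper follows) is precisely the one you state afterwards: apply the local-in-time estimate of \cite{MetcalfeTataru12} on each length-$2$ interval to $\phi_{I}(t)(1-\chi)U$ with zero data, so that the commutator $a[\partial_{t}^{2},\phi_{I}]v$ lands in the forcing and is controlled by $\|D_{t,x}U\|_{L^{2}_{I}L^{2}(|x|\le R+1)}$; then sum. The Sobolev step is not needed.

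Second, there is a genuine technical wrinkle you pass over. Theorem~3 of \cite{MetcalfeTataru12} (the local-in-time result without smallness) only accepts inhomogeneities in the local-energy-dual space $Y^{0}$, not in $L^{r'}\dot H^{2/r}_{s'}$. Since the near piece also carries the cut-off original forcing $\chi F$, the paper has to upgrade Theorem~3: it invokes the parametrix $K$ from \cite{MetcalfeTataru12}, writes $u=Ku_{f}+\tilde u$ with $P\tilde u=(I-PK)f+g\in Y^{0}$, combines the parametrix bounds with an energy/Gronwall argument, and obtains a variant admitting forcing in $L^{r'}\dot H^{2/r}_{s'}+L^{2}L^{2}_{1/2+}$. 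The paper also treats the zeroth-order part $b_{0}$ of $b$ as forcing and absorbs it via the $(\infty,2)$ endpoint after shrinking the time step. Your sketch presumes these upgrades are already available.
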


\begin{proof}%[Proof of ...]
  Let $R$ be a  large parameter to be chosen below and
  $\chi(x)=\chi_R(x)$ be a smooth cutoff equal to 1 on a
  ball $B(0,R)$ and vanishing outside $B(0,R+1)$, 
  whose derivatives are bounded independently of $R$.
  We decompose $U=v+w$ with $v=\chi U$ and $w=(1-\chi)U$.
  Let $(p,q)$ and $(r,s)$ be wave admissible. 
 
 1) The piece $w$ is supported in $|x|\ge R$ and solves the problem
  \begin{equation*}
    (a \partial_{t}^{2}-\Delta-b(x,\partial))w=G+ (1-\chi)F,
    \qquad
    w(0,\cdot)=(1-\chi) U_{0},
    \qquad
    w_{t}(0,\cdot)=(1-\chi) U_{1},
  \end{equation*}
  where $G$ is the commutator $G=[\chi,\Delta+b]U$.  
  Note that $G$ is supported in $R\le |x|\le R+1$
  and satisfies, for some constant depending on the coefficients,
  \begin{equation}\label{eq:estR}
    |G|\le C(|U|+|D_{x} U|)\one{R\le |x|\le R+1}.
  \end{equation}
  Moreover, by choosing $R$ large enough, we see that
  in the region $|x|\ge R$ the coefficients fulfill 
  assumptions (8), (9) and (10) of 
  \cite{MetcalfeTataru12}
  (modified as in Remark 1 of that paper) with a constant
  $\varepsilon>0$ which can be made arbitrarily small as
  $R\to \infty$. 
  
  We want to apply Theorem~2 of \cite{MetcalfeTataru12} with $s=0$
  for $R\ge R_{0}$ and some sufficiently large $R_{0}\ge 2$.
However this theorem does directly apply to zero-order terms
in our situation. So we use it with the modified inhomogeneity 
$G+  (1-\chi)F+ b_0w$, where $bw= b_1\cdot \nabla w +b_0w$.
We combine Theorem~2 with estimates (12) and  (16) 
of \cite{MetcalfeTataru12}, all for $s=0$.
These estimates allow to bound the $X^0$ and $Y^0$ norms of 
\cite{MetcalfeTataru12} from below and above, respectively,
 by weighted $L^2$-based norms.
  Using also Lemma~\ref{lem:mult}, it follows
  \begin{align*}
  \sup_{j\ge j_R} &\|\bra{x}^{-\frac12} \nabla w\|_{L^2L^2(A_j)}
  +\||D|^{\sigma}D_{t,x}w\|_{L^{p}L^{q}}\\
   & \lesssim
    \|D_{t,x}w(0)\|_{L^{2}}
    + \||D|^\rho ((1-\chi)F) \|_{L^{r'}L^{s'}}
    + \sum_{j\ge j_R} \|\bra{x}^{\frac 12}(G+ b_0w)\|_{L^{2}L^{2}(A_j)} \\
    &\lesssim  \|D_{t,x}w(0)\|_{L^{2}}+
      \||D|^\rho F \|_{L^{r'}L^{s'}}+
    \|\bra{x}^{\frac 12+}G\|_{L^{2}L^{2}}
    +\sum_{j\ge j_R} \|\bra{x}^{-\frac32-\delta}w\|_{L^{2}L^{2}(A_j)},
  \end{align*}
  where   $R\eqsim 2^{j_R}$, $\sigma=-\frac2p$, $\rho=\frac2r$
  and $A_j=\{2^{j-1}\le |x|\le 2^{j}\}$.
H\"older's and Sobolev's inequalities imply
\begin{align*} 
\sum_{j\ge j_R} \|\bra{x}^{-\frac32-\delta}w\|_{L^{2}L^{2}(A_j)}
&\lesssim \sum_{j\ge j_R} 2^{-j(1/2+\delta)}\|w\|_{L^{2}L^{6}(A_j)}
\lesssim \sum_{j\ge j_R} 2^{-j(1/2+\delta)}\|\nabla w\|_{L^{2}L^{2}(A_j)}\\
&\lesssim R_0^{-\delta} \sup_{j\ge j_R} \|\bra{x}^{-\frac12}\nabla w\|_{L^{2}L^{2}(A_j)}.
\end{align*}
Taking a large $R_0$, we infer
  \begin{equation*}
    \||D|^{\sigma}D_{t,x}w\|_{L^{p}L^{q}}
    \lesssim
    \|D_{t,x}w(0)\|_{L^{2}}+
    \||D|^\rho F \|_{L^{r'}L^{s'}}+
    \|\bra{x}^{\frac 12+}G\|_{L^{2}L^{2}}.
  \end{equation*}

Since $G$ is compactly supported in $R\le|x|\le R+1$, we can
  bound
  \begin{equation*}
    \|\bra{x}^{\frac 12+}G\|_{L^{2}L^{2}}\lesssim
    \|G\|_{L^{2}L^{2}}
  \end{equation*}
  with an implicit constant $\eqsim R^{1/2+}$,
  and Sobolev's embedding further yields
  \begin{equation*}
    \|D_{t,x}w(0)\|_{L^{2}}\le C(R)\|D_{t,x}U(0)\|_{L^{2}}.
  \end{equation*}
  Recalling the definition of $G$,
  the previous estimate can thus be simplified to
  \begin{equation}\label{eq:str1}
    \||D|^{\sigma}D_{t,x}w\|_{L^{p}L^{q}}
    \lesssim
    \|D_{t,x}U(0)\|_{L^{2}}+
    \||D|^\rho F \|_{L^{r'}L^{s'}}+
    \||U|+|D_{x}U|\|_{L^{2}L^{2}(R\le |x|\le R+1)}.
  \end{equation}

 2) Next we consider the remaining piece $v$ supported in $|x|\le R+1$, 
 which solves
  \begin{equation*}
    (a \partial_{t}^{2}-\Delta-b(x,\partial))v=-G+ \chi F,
    \qquad
    v(0,x)=\chi U_{0},
    \qquad
   \partial_{t} v(0,x)=\chi U_{1}.
  \end{equation*}
  In the region $|x|\le R+1$ the coefficients
  satisfy assumptions (8), (9) and (10) of 
  \cite{MetcalfeTataru12}
  (again modified as in Remark 1 of the paper) but with a possibly 
 large constant $\epsilon$ there. We want to apply Theorem~3 of this 
 paper. As above, we have to generalize this result to the case of 
 potentials. Moreover, in this theorem only treats inhomogenities
  in $Y^0$ and not in  $L^{r'}\dot{H}^{-2/r}_{s'}+ Y^0$, as needed by us.

We first extend this result to a forcing term of the form
$f+g \in L^{r'}\dot{H}^{-2/r}_{s'}+ L^2L^2_{1/2+}$
using the parametrix $K$ from  Theorem~3 of
\cite{MetcalfeTataru12}. 
(We note that $L^2L^2_{1/2+}\hookrightarrow Y^0$ by (16) 
of this paper).
Let $P=a\partial_t^2 - \Delta- b_1\cdot\nabla$. 
We consider a  function $u$ with $Pu=f+g$. 
Set $\tilde u=u-Kf$ so that
$P\tilde u= (I-PK)f+g$. 
(See the proof of Lemma~9 in  \cite{MetcalfeTataru12}.)
We restrict the time interval to $t\in[0,\tau]$ 
for some $\tau\in(0,2]$ and let $L^p_\tau=L^p_{(0,\tau)}$.
Estimates (24) and (25) of  \cite{MetcalfeTataru12} then yield
\begin{align*}
&\||D|^{-\rho}D_{t,x}u\|_{L^{r}_\tau L^{s}} 
+ \||D|^{\sigma}D_{t,x}u\|_{L^{p}_\tau L^{q}} \\
&\le \||D|^{-\rho}D_{t,x}\tilde u\|_{L^{r}_\tau L^{s}} 
+  \||D|^{-\rho}D_{t,x}Kf\|_{L^{r}_\tau L^{s}}
+ \||D|^{\sigma}D_{t,x}\tilde u\|_{L^{p}_\tau L^{q}} 
+\||D|^{\sigma} D_{t,x}Kf\|_{L^{p}_\tau L^{q}} \\
&\lesssim \|D_{t,x}(u-Kf)\|_{L^{\infty}_\tau L^{2}} +
   \|(I-PK)f+g\|_{Y^0}  + \||D|^{\rho} f\|_{L^{r'}_\tau L^{s'}}\\
&\lesssim \|D_{t,x} u\|_{L^{\infty}_\tau L^{2}}
   +\||D|^{\rho} f\|_{L^{r'}_\tau L^{s'}} + \|g\|_{Y^0}.
\end{align*}
Here we also use that, on $(0,\tau)$, 
the $X^{0}$ norm (modified as in Remark 1 of
\cite{MetcalfeTataru12}) is controlled by the $L^{\infty}L^{2}$ norm.
The implicit constant is uniform in $\tau\le 2$, 
but depends on $R$.

Let $t\in[0,\tau]$ and $D_{t,x}u(0)=0$. 
A standard energy estimate yields
\begin{align*}
\|D_{t,x}u(t)\|_{L^2}^2 
&\lesssim  \int_0^t \int_{\mathbb{R}^3} 
(|f|+ |g|+ |\nabla u|)|\partial_t u|\, dx\,ds\\
& \lesssim 
\||D|^{\rho} f\|_{L^{r'}_\tau L^{s'}}\, 
\||D|^{-\rho}\partial_t u\|_{L^{r}_\tau L^{s}}
+ \|g\|_{L^1_\tau L^2} \,  \|\partial_t u\|_{L^\infty_\tau L^2}
+\int_0^t \|D_{t,x}u(s)\|_{L^2}^2 ds.
\end{align*}
By means of Gronwall's inquality we infer
\begin{align*}
\|D_{t,x}u\|_{L^\infty_\tau L^2} 
 &\le \|D_{t,x}u(0)\|_{L^2}
 +\kappa \||D|^{-\rho}\partial_t u\|_{L^{r}_\tau L^{s}}
 +\kappa \|\partial_t u\|_{L^\infty_\tau L^2}\\
&\qquad + c(\kappa)(\||D|^{\rho} f\|_{L^{r'}_\tau L^{s'}} 
 +\|\bra{x}^{{1/2}+}g\|_{L^2_\tau L^2}).
\end{align*}
We can absorb the second and third term on the right
choosing a small $\kappa>0$ so that
\begin{equation}\label{eq:thm3-var}
\||D|^{\sigma}D_{t,x}u\|_{L^{p}_\tau L^{q}}
\lesssim \|D_{t,x}u(0)\|_{L^2}
  +\||D|^{\rho} f\|_{L^{r'}_\tau L^{s'}} 
 +\|\bra{x}^{{1/2}+}g\|_{L^2_\tau L^2}.
\end{equation}

3) We put the term $-b_0v$  to the RHS as before, 
now applying \eqref{eq:thm3-var} with $f=\chi F$ and $g=b_0v-G$.
% We can now apply Theorem 3 of \cite{MetcalfeTataru12}
%  with $s=0$ and restricting the time interval to $t\in[0,\tau]$ 
%  for some $\tau\in(0,2]$. 
  To deal with  the zero-order part,
  we also involve the trivial Strichartz pair $(\infty, 2)$, obtaining
  \begin{align*}
   \|\nabla v\|_{L^\infty_\tau L^2} 
    +\||D|^{\sigma}D_{t,x}v\|_{L^{p}_ \tau L^{q}}
    &\lesssim
  \|D_{t,x} v(0)\|_{L^{2}}+
    \||D|^{\rho} (\chi F)\|_{L^{r'}_\tau L^{s'}}+
    \|\bra{x}^{\frac 12+}G\|_{L^{2}_\tau L^{2}}\\
   &\qquad  + \|\bra{x}^{-\frac32-}v\|_{L^{2}_\tau L^{2}}.
 % + \sum_{j\ge0} \|\bra{x}^{-\frac32-}v\|_{L^{2}_\tau L^{2}(A_j)}.
  \end{align*}
 % Here we used the fact that, on a bounded time interval, 
  %the $X^{0}$ norm (modified as in Remark 1 of
 % \cite{MetcalfeTataru12}) is controlled
 % by the $L^{\infty}L^{2}$ norm. 
 Employing H\"older's and Sobolev's inequalities as in step 1), 
 we control the last summand by 
  \[ \|\bra{x}^{-\frac32-}v\|_{L^{2}_\tau L^{2}}
  \lesssim \|\nabla v\|_{L^{2}_\tau L^{2}}
  \le   \tau^{\frac12}  \|\nabla v\|_{L^{\infty}_ \tau L^{2}}.\]
 % \sum_{j\ge0} \|\bra{x}^{-\frac32-}v\|_{L^{2}_ \tau L^{2}(A_j)}
  %\lesssim \tau^{\frac12}  \sum_{j\ge0} 2^{-j/2} \|\nabla v\|_{L^{\infty}_ \tau L^{2}(A_j)}
  For a fixed small $\tau>0$ we can absorb this term by the LHS.
  As before we  then simplify the estimate to
  \begin{equation*}
    \||D|^{\sigma}D_{t,x}v\|
    _{L^{p}_ \tau L^{q}}
    \lesssim
    \|D_{t,x} v(0)\|_{L^{2}}+
    \||D|^{\rho} F\|_{L^{r'}_\tau L^{s'}}+
    \|G\|_{L^{2}_\tau L^{2}}.
  \end{equation*}
% The standard local in time energy estimate then yields 
%  \begin{equation*}
%    \|D_{t,x} v\|_{L^{\infty}_ \tau L^{2}}\lesssim
%    \|D_{t,x}U(0)\|_{L^{2}}+
%    \||D|^{\rho} F\|_{L^{r'}_\tau L^{s'}}+
%    \|G\|_{L^{2}_\tau L^{2}}.
%  \end{equation*}
  This inequality is invariant under time translations. By a finite iteration,
  we conclude 
  \begin{equation}\label{eq:str2}
    \||D|^{\sigma}D_{t,x}v\|
    _{L^{p}_{[0,2]}L^{q}}
    \lesssim
    \|D_{t,x} v(0)\|_{L^{2}}+
    \||D|^{\rho} F\|_{L^{r'}_{[0,2]} L^{s'}}+
    \|G\|_{L^{2}_{[0,2]}L^{2}}
  \end{equation}
  controlling the initial values by means of the Strichartz pair $(\infty,2)$.
  Observe that this estimate is valid on any time interval of length 2.
 
  We now use a reduction which (to our knowledge) 
  originates in \cite{Burq03-a}.
  Let $\mathcal{I}$ be the sequence of intervals
  $I=[k,k+2]$ for $k\in \mathbb{Z}$ and 
  $\{\phi_{I}\}_{I\in \mathcal{I}}$ be a smooth partition of
  unity adapted to $\mathcal{I}$. The cutoffed function
  $v_{I}=\phi_{I}(t)v(t,x)$ solves
  \begin{equation*}
    (a \partial_{t}^{2}-\Delta-b(x,\partial))v_{I}=F_I + G_{I},
    \qquad
    v_I(k,x)= \partial_{t} v_I(k,x)=0,
  \end{equation*}
 where $F_I=\chi\phi_IF$ and
  $G_{I}=a[\partial_{t}^{2},\phi_{I}]v-\phi_{I} G$ 
  are supported in $\{|x|\le R+1, t\in I\}$. We have  
  \begin{equation}\label{eq:estG}
    |G_{I}(t,x)|\le C(|U|+|D_{t,x} U|)\one{|x|\le R+1}(x)   
    \one{I}(t).
  \end{equation}
% Note also that $v_{I}$ has 0 initial data at $t=\inf I$.
  Estimate \eqref{eq:str2} on the time interval $I$ yields
  \begin{equation*}
    \||D|^{\sigma}D_{t,x}v_{I}\|
    _{L^{p}_{I}L^{q}}
    \lesssim
    \||D|^{\rho} F_I\|_{L^{r'}_{I} L^{s'}}+
    \|G_{I}\|_{L^{2}_{I}L^{2}}.
  \end{equation*}$\|D_{t,x}\D^h\|_{L^{2}L^{2}_{-1/2-}} + \|D_{t,x}\D^h\|_{L^{2}L^{2}_{-1/2-}}$.
  We now raise both sides to the power $p\ge2$ and sum over 
  $I\in \mathcal{I}$, obtaining
  \begin{equation*}
    \textstyle
    \||D|^{\sigma}D_{t,x}v\| _{L^{p}L^{q}}^{p}
    \lesssim                           
    \sum_{I}\||D|^{\sigma}D_{t,x}v_{I}\| _{L^{p}_{I}L^{q}}^{p}
    \lesssim
    \sum_{I}\||D|^{\rho} F_I\|_{L^{r'}_{I} L^{s'}}^p
     +\sum_I \|G_{I}\|_{L^{2}_{I}L^{2}}^{p}.
  \end{equation*}
  Since   $\sum c_{I}^{p}\le(\sum c_{I}^{\kappa})^{p/\kappa}$
  for $\kappa\in[1,p]$, we deduce
  \begin{equation*}
    \textstyle
    \||D|^{\sigma}D_{t,x}v\| _{L^{p}L^{q}}^{p}
    \lesssim
    (\sum_{I}\||D|^{\rho} F_I\|_{L^{r'}_{I} L^{s'}}^{r'})^{p/r'}
    +(\sum_{I}\|G_{I}\|_{L^{2}_{I}L^{2}}^{2})^{p/2}.
  \end{equation*}
  Inquality \eqref{eq:estG} then leads to
  \begin{equation*}
    \||D|^{\sigma}D_{t,x}v\| _{L^{p}L^{q}}
    \lesssim
    \||D|^{\rho} (\chi F)\|_{L^{r'} L^{s'}}+
    \||U|+|D_{t,x}U|\|_{L^{2}L^{2}(|x|\le R+1)}.    
  \end{equation*}
  We also use Sobolev's inequality  to estimate $U$ by $D_{x}U$
  with a constant depending on $R$.
  Together with Lemma~\ref{lem:mult} and \eqref{eq:str1}, the assertion follows.
\end{proof}

Using estimate (19) of \cite{MetcalfeTataru12} one checks easily
that the results in this paper, and hence
Proposition \ref{pro:strichcond},
are valid more generally for the system of wave equations
\begin{equation}\label{eq:wavemat}
  (aI_{3}\partial_{t}^{2}-I_{3}\Delta-b(x,\partial))U=F,
  \qquad
  U(0,\cdot)=U_{0},
  \qquad
  \partial_{t}U(0,\cdot)=U_{1},
\end{equation}
with diagonal principal part,
where $b(x,\partial)$ is a matrix first-order operator
which satisfies decay assumptions as in Proposition \ref{pro:strichcond}.
We now apply \eqref{eq:condstrich} to the Maxwell system
\begin{equation}\label{eq:maxwDhom}
  \textstyle
  \partial_{t}^2\D+\nabla \times \frac 1\mu \nabla 
  \times \frac1{\epsilon} \D=F,
  \qquad \D(0,\cdot)=\D_{0},
  \ \ \partial_{t}\D(0,\cdot)=\D_{1},
  \quad \nabla \cdot\D_{0}=\nabla \cdot\D_{1}
           =\nabla\cdot F=0.
\end{equation}
Recall that the solution to the above problem is given by
\begin{equation}\label{eq:maxwDhom-sol}
\D(t)=\cos (t\sqrt{H})\D_0 + \sin (t\sqrt{H})H^{-1/2}\D_1 
   + H^{-1/2}\int_0^t  \sin ((t-s)\sqrt{H})F(s)ds.
\end{equation}
We denote by $\D^h$  the solution to the problem with $F=0$ and by $\D^i$ 
that one with $\D_0=\D_1=0$. 

For $F=0$, the conditional Strichartz
estimate in Proposition~\ref{pro:strichcond} and the smoothing
estimates in Propositions~\ref{pro:estMaxwhom} and \ref{pro:homhom}
easily yield the Strichartz inequality for $\D^h$. The usual $TT^*$
argument then allows us to bound $\partial_t \D^i$ and
$\sqrt{H} \D^i$ in $L^p\dot{H}^{-2/p}_q$. To replace here $\sqrt{H}$ 
by $\nabla$, one would need a variant of Lemma~\ref{lem:riesz} in 
$\dot{H}^{-2/p}_q$ which should require a substantial effort.
We by-pass this difficulty by means of a modified $TT^*$ argument
that also uses ideas from Proposition~\ref{pro:estMaxwhom}.
This is possible since we only have to control an error term 
in $L^2L^2_{-1/2-}$ arising from Proposition~\ref{pro:strichcond}. 
Here it turns out to be enough to estimate $\nabla H^{-1/2}$ in
$L^2_{-1/2-}$ just using Lemma~\ref{lem:riesz}. 
%Since we employ 
%\eqref{eq:eqpowH1}, we have to strenghten the assumptions of 
%Proposition \ref{pro:homhom} a bit.

\begin{theorem}[]\label{the:strichM}
 Under the assumptions of Proposition \ref{pro:homhom},
  Then  the solution $\D(t,x)$ to problem \eqref{eq:maxwDhom}
  satisfies for any wave admissible $(p,q)$ and $(r,s)$ the estimate
  \begin{equation}\label{eq:strichM}
    \||D|^{-\frac 2p}D_{t,x}\D\| _{L^{p}L^{q}}
    \lesssim
    \|\nabla\D_{0}\|_{L^2}+\|\D_{1}\|_{L^{2}}+\||D|^{\frac2r}  F\|_{L^{r'} L^{s'}} .
  \end{equation}
\end{theorem}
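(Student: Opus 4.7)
The plan is to split $\D = \D^h + \D^i$ into the homogeneous piece (with $F = 0$) and the forced piece (with zero initial data) and to apply Proposition~\ref{pro:strichcond} to the second-order formulation \eqref{eq:waveD1},
\begin{equation*}
  \epsilon \mu \partial_{t}^{2} \D - \Delta \D - b(x,\partial) \D = \epsilon \mu F,
\end{equation*}
for each piece. The decay assumptions (1) and (2) of the theorem imply those of Proposition~\ref{pro:strichcond}, and by Lemma~\ref{lem:mult} multiplication by $\epsilon \mu$ is bounded on $\dot{H}^{2/r}_{s'}$, so $\||D|^{2/r}(\epsilon \mu F)\|_{L^{r'} L^{s'}} \lesssim \||D|^{2/r} F\|_{L^{r'} L^{s'}}$. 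The whole problem thus reduces to bounding the error term
\begin{equation*}
  \|D_{t,x}\D\|_{L^{2}L^{2}(|x|\le R+1)} \;\lesssim\; \|\bra{x}^{-1/2-}\partial_{t}\D\|_{L^{2}L^{2}} + \|\bra{x}^{-1/2-}\nabla \D\|_{L^{2}L^{2}}
\end{equation*}
by the RHS of \eqref{eq:strichM}, separately for $\D^h$ and $\D^i$.

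For the homogeneous piece $\D^h = \cos(t\sqrt{H})\D_{0} + \sin(t\sqrt{H}) H^{-1/2}\D_{1}$ one has $\partial_{t}\D^h = -\sqrt{H}\sin(t\sqrt{H})\D_{0} + \cos(t\sqrt{H})\D_{1}$, and since $-\sqrt H \sin(t\sqrt H)\D_0 = \partial_t[\cos(t\sqrt H)\D_0]$, the weighted $L^2$ bounds \eqref{eq:estMaxwhom} give
\begin{equation*}
  \|\bra{x}^{-1/2-}\partial_{t}\D^h\|_{L^{2}L^{2}} \lesssim \|\nabla\D_{0}\|_{L^{2}} + \|\D_{1}\|_{L^{2}}.
\end{equation*}
For the spatial gradient, \eqref{eq:homhom} bounds $\|\bra{x}^{-1/2-}\nabla \cos(t \sqrt{H})\D_0\|_{L^2L^2}\lesssim\|\nabla\D_0\|_{L^2}$ and $\|\bra{x}^{-1/2-}\nabla \sin(t\sqrt H) H^{-1/2}\D_1\|_{L^2L^2} \lesssim \|\nabla H^{-1/2}\D_1\|_{L^2}$, and by \eqref{eq:eqpowH1} applied to the divergence free function $H^{-1/2}\D_1$, the latter is $\lesssim \|\D_1\|_{L^2}$. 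Substituting into Proposition~\ref{pro:strichcond} with $F = 0$ yields \eqref{eq:strichM} for $\D^h$.

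For $\D^i$ one has $\partial_{t}\D^i = \int_{0}^{t}\cos((t-s)\sqrt{H})F(s)\,ds$ and $\nabla\D^i = \nabla H^{-1/2}\int_{0}^{t} \sin((t-s)\sqrt{H}) F(s)\,ds$. The plan is a modified $TT^*$ argument: first replace $\int_{0}^{t}$ by $\int_{\mathbb{R}}$ and apply the product-to-sum identities
\begin{align*}
  \cos((t-s)\sqrt{H}) &= \cos(t\sqrt{H})\cos(s\sqrt{H}) + \sin(t\sqrt{H})\sin(s\sqrt{H}),\\
  \sin((t-s)\sqrt{H}) &= \sin(t\sqrt{H})\cos(s\sqrt{H}) - \cos(t\sqrt{H})\sin(s\sqrt{H}),
\end{align*}
to factor the convolutions as $\cos(t\sqrt H)A + \sin(t\sqrt H)B$ and $\sin(t\sqrt H)A - \cos(t\sqrt H)B$ respectively, where $A = \int\cos(s\sqrt H)F(s)\,ds$ and $B = \int\sin(s\sqrt H)F(s)\,ds$ are divergence free. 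Dualizing the homogeneous Strichartz estimate for $\D^h$ (with $\D_0 = 0$) yields $\|A\|_{L^2} + \|B\|_{L^2}\lesssim \||D|^{2/r}F\|_{L^{r'}L^{s'}}$. Then \eqref{eq:estMaxwhom} bounds $\|\bra{x}^{-1/2-}\partial_t \D^i\|_{L^2L^2}$ via the first factorization, while for $\|\bra{x}^{-1/2-}\nabla \D^i\|_{L^2L^2}$ one commutes $H^{-1/2}$ past the flow via the spectral theorem, applies \eqref{eq:homhom} and invokes $\|\nabla H^{-1/2}A\|_{L^2}\lesssim \|A\|_{L^2}$ from \eqref{eq:eqpowH1}. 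Finally the Christ--Kiselev lemma, applicable since $r'<2$ for any wave admissible $(r,s)$, restores the retarded integral $\int_{0}^{t}$. The principal obstacle is precisely why this indirect route is needed: the standard $TT^*$ controls $\sqrt H\D^i$ rather than $\nabla \D^i$ in $L^p \dot H^{-2/p}_q$, and converting between them would require a Riesz-type bound for $H$ in $L^q$-based Sobolev spaces which is not available here, whereas the modified $TT^*$ above keeps the exchange in weighted $L^2$ where Lemma~\ref{lem:riesz} supplies the necessary scalar bound.
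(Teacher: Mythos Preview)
Your proposal is correct and follows essentially the same route as the paper: apply the conditional Strichartz estimate of Proposition~\ref{pro:strichcond} to the second-order form, control the local error term for $\D^h$ via Propositions~\ref{pro:estMaxwhom}--\ref{pro:homhom} and \eqref{eq:eqpowH1}, and for $\D^i$ factor the unretarded Duhamel integral, dualize the homogeneous estimate to bound $\|A\|_{L^2}+\|B\|_{L^2}$, and close with Christ--Kiselev. One small imprecision: dualizing the $\D_0=0$ case alone yields the bound on $A=\int\cos(s\sqrt H)F\,ds$ (via $D_t$), but for $B=\int\sin(s\sqrt H)F\,ds$ you also need $\|\sin(t\sqrt H)f\|_{L^r\dot H^{-2/r}_s}\lesssim\|f\|_{L^2}$, which the paper extracts from the $\D_1=0$ case by writing $\sin(t\sqrt H)f=D_t\cos(t\sqrt H)H^{-1/2}f$ and invoking \eqref{eq:eqpowH1}.
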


\begin{proof}%[Proof of ...]
% We assume that $F$ has support in $t\ge0$. The general case is then treated 
% by time reversal and linearity.
  As in the proof of Proposition \ref{pro:estMaxw} we can recast
  \eqref{eq:maxwDhom} in the form \eqref{eq:wavemat}.
  Since the conditions in Proposition \ref{pro:strichcond}
  are satisfied, estimate \eqref{eq:condstrich} yields
  \begin{equation*}
    \||D|^{-\frac 2p}D_{t,x}\D\| _{L^{p}L^{q}}
    \lesssim
    \|\nabla\D_{0}\|_{L^2}+\|\D_{1}\|_{L^{2}}
    + \||D|^{\frac2r}  F\|_{L^{r'} L^{s'}}
    +\|D_{t,x}\D\|_{L^{2}L^{2}(|x|\le R_0+1)}.
  \end{equation*}
  for some fixed radius $R_0\ge1$.
  The last term is bounded by a constant times
  \[  \|D_{t,x}\D^h\|_{L^{2}L^{2}_{-1/2-}} + \|D_{t,x}\D^i\|_{L^{2}L^{2}_{-1/2-}}.\]
  %   + \|\sqrt{H}\D^i\|_{L^{2}L^{2}_{-1/2-}}, \]
 % where we have used \eqref{eq:eqpowH2}. 
 In view of \eqref{eq:maxwDhom-sol},
  Propositions \ref{pro:estMaxwhom} and \ref{pro:homhom} and  \eqref{eq:eqpowH1} yield
 \[\|D_{t,x}\D^h\|_{L^{2}L^{2}_{-1/2-}} \lesssim \|\nabla\D_{0}\|_{L^2}+\|\D_{1}\|_{L^{2}}.\]
We thus have shown \eqref{eq:strichM} for $F=0$.

Consider now the case $F\neq0$. To complete the proof it is
sufficient to prove the estimate
\begin{equation*}
  \left\|D_{t,x}\int_{0}^{t}
  H^{-\frac12}e^{i(t-s)\sqrt{H}} F(s)ds\right\|_{Z}\le C\|F\|_{E^{*}}
\end{equation*}
where $F$ is divergence free and we set $ E= L^r\dot H^{-2/r}_s$ and
 $ Z=L^2L^2_{-1/2-}.$ First, we notice that by the 
 Christ--Kiselev Lemma this estimate follows from the analogous 
 unretarded one (since $r>2$)
\begin{equation*}
  \left\|D_{t,x}\int
  H^{-\frac12}e^{i(t-s)\sqrt{H}} F(s)ds\right\|_{Z}\le C\|F\|_{E^{*}}.
\end{equation*}
Next, we split
$D_{t,x}H^{-\frac12}\int e^{i(t-s)\sqrt{H}}F(s)ds
  =D_{t,x}e^{it \sqrt{H}}H^{-1/2}\int e^{-is \sqrt{H}}F(s)ds$
and we recall from Propositions \ref{pro:estMaxwhom}
and \ref{pro:homhom} the inequalities
\begin{equation*}
  \|e^{it \sqrt{H}}f\|_{Z}\lesssim\|f\|_{L^{2}},
  \qquad
  \|\nabla e^{it \sqrt{H}}f\|_{Z}\lesssim\|\nabla f\|_{L^{2}}.
\end{equation*}
Combined with \eqref{eq:eqpowH1}, these estimates yield 
\begin{align}
\hspace*{-0.3cm}  \left\|D_{t,x}\int
  H^{-\frac12}e^{i(t-s)\sqrt{H}} F(s)ds\right\|_{Z}&\lesssim
  \left\|
    \int e^{-is \sqrt{H}}F(s)ds
  \right\|_{L^{2}}
  +
  \left\|
    \nabla H^{-\frac12}\int e^{-is \sqrt{H}}F(s)ds
  \right\|_{L^{2}}\notag\\
%\end{equation*}
%which by \eqref{eq:eqpowH1} is equivalent to
%\begin{equation}
  &\lesssim
  \left\|
    \int \cos(s \sqrt{H}) F(s)ds
  \right\|_{L^{2}}+
  \left\|
    \int \sin(s \sqrt{H}) F(s)ds
  \right\|_{L^{2}}.\label{eq:interduh}
\end{align}
In the first part of the proof we have seen
\begin{equation*}
  \|D_{t,x}H^{-\frac12}\sin(t \sqrt{H})f\|_{E}
  \lesssim\|f\|_{L^{2}},
  \qquad
  \|D_{t,x}\cos(t \sqrt{H})f\|_{E}
  \lesssim\|\nabla f\|_{L^{2}}.
\end{equation*}
Considering only $D_{t}$, the first  inequality implies
\begin{equation*}
  \|\cos(t \sqrt{H})f\|_{E}
  \lesssim\|f\|_{L^{2}},
\end{equation*}
while the second one gives
\begin{equation*}
  \|\sin(t \sqrt{H})f\|_{E} =\|D_t\cos(t \sqrt{H})H^{-\frac12}f\|_{E}
  \lesssim\|\nabla H^{-\frac12} f\|_{L^{2}}\lesssim\|f\|_{L^{2}}
\end{equation*}
using again \eqref{eq:eqpowH1}. Applying the dual estimates we see
that both terms in \eqref{eq:interduh} can be estimated
by $\|F\|_{E^{*}}$, and this concludes the proof.
\end{proof}

The above theorem now easily implies our main result.

\begin{proof}[Proof of Theorem~\ref{the:strichmaxw}]
%We can now easily extend Theorems~\ref{the:strichM} and \ref{the:strichM-in}
%to  $\B$, $\E$, and $\H$. The final results are stated in the introduction
%in the framework of the (first-order) Maxwell system. 
In view of \eqref{eq:waveD}, the main results for $\D$ is an immediate consequence
of the above theorem. The magnetic  field $\B$ solves \eqref{eq:waveB} 
which is of the same form as equation \eqref{eq:waveD} for $\D$ except that 
$\epsilon$ and $\mu$ are interchanged.
So Theorem~\ref{the:strichM} is true 
with $\B$ instead of $\D$ if we  replace the condition on second derivatives 
in (2) of Proposition~\ref{pro:homhom} by
\[ |D^{2}\epsilon|\lesssim \bra{x}^{-2-\delta}, \qquad 
    |D^{2}\mu|      \lesssim \bra{x}^{-\frac52-\delta}.\]
% and interchange   $\epsilon$ and $\mu$ in the definition of $H$.
 Theorem~\ref{the:strichmaxw} for $\B$ again follows
easily taking into account   \eqref{eq:waveB} and Lemma~\ref{lem:mult}.
 
% For the electric field $\E=\epsilon^{-1}\D$, the proof of 
% Theorem~\ref{the:strichM} implies that the multiplication operators 
% induced by $\epsilon$, $\mu$, $\epsilon^{-1}$, or $\mu^{-1}$ are bounded 
% on $\dot H^{2/p}_{q'}$. By duality, they also act continuously on $\dot H^{-2/p}_{q}$. 
% We further have the embedding $\dot H^{1-2/p}_{q}\hookrightarrow L^{3q}$. As above, 
% one then  shows that these operators are bounded on $\dot H^{1-2/p}_{q}$.
% Special cases are $L^2$ and $\dot H^{1}$, where $p=\infty$ and $q=2$.
By Lemma~\ref{lem:mult} we can  replace
$\D$ by $\E$ in Theorem~\ref{the:strichmaxw}, with the divergence conditions
$\nabla\cdot(\epsilon \E_0)=\nabla\cdot(\epsilon \E_1)=0$. 
% The inhomogeneous Strichartz estimate for $\E$ does not follow in the same way
% because of the presence of $\sqrt{H}$ on the left-hand side of \eqref{eq:strichM-in}.
% Instead we  employ the
% operator $H_{\E}\E= \frac1\epsilon\nabla \times\frac 1\mu \nabla \times \E$ which 
% is self adjoint and non negative on the Hilbert space
% \begin{equation}\label{eq:HS-E}
%   \mathcal{H}_{\E}=\{u\in L^{2}(\mathbb{R}^{3};\mathbb{C}^{3})
%   \colon
%   \nabla \cdot (\epsilon u)=0\}
% \end{equation}
% endowed with the scalar product 
% $(u,v)_{\mathcal{H}_{\E}}=\int \epsilon u \cdot \overline{v}dx$. As in the proof 
% Theorem~\ref{the:strichM-in}, see \eqref{eq:SC},
% the variant for $\E$ of the homogeneous estimate 
% \eqref{eq:strichM} then implies the boundedness $L^2\to E$ of the sine and cosine 
% functions for $H_{\E}$. We can then deduce the inhomogeneous 
% Strichartz inequality \eqref{eq:strichM-in} with $\E$ instead of $\D$ as before, 
% now assuming that $\nabla(\epsilon F)=0$. As before, in view of \eqref{eq:waveE}
% we have shown Theorems~\ref{the:strichmaxw} and \ref{the:strichmaxw-in}
% also for $\E$.
One can pass from $\B$ to $\H$ in same way as from $\D$ to $\E$.
\end{proof}

\printbibliography %needed
%%% >>> BIBTEX: (cancellare \usepackage{biblatex})
% \bibliography{/Users/piero/.bib/bibliodatabase.bib}
% \bibliographystyle{abbrv}
%%% >>> entrambe si possono usare contemporaneamente a:
% \begin{\thebibliography}{10}  DATI  \end{thebiblography}
\end{document}